\documentclass[a4paper, 12pt]{amsart}
\usepackage[margin=2.65cm]{geometry}
\usepackage{tikz, tikz-cd}
\tikzcdset{arrow style=tikz,
           diagrams={>=Straight Barb}
           }
\usetikzlibrary{arrows.meta}
\usepackage{amssymb,amsrefs,mathtools,tensor}

\newcommand{\id}{\operatorname{id}}
\newcommand{\PI}{\operatorname{PIso}}

\newcommand{\CC}{\mathbb{C}}

\newcommand{\NN}{\mathbb{N}}
\newcommand{\ZZ}{\mathbb{Z}}

\newcommand{\Cc}{\mathcal{C}}

\newcommand{\Jj}{\mathcal{J}}

\newcommand{\Nn}{\mathcal{N}}
\newcommand{\Oo}{\mathcal{O}}

\newcommand{\lZ}[1]{Z(#1]}
\newcommand{\rZ}[1]{Z[#1)}
\newcommand{\bZ}[1]{Z(#1)}

\newtheorem{thm}{Theorem}[section]
\newtheorem{cor}[thm]{Corollary}

\newtheorem{lem}[thm]{Lemma}
\newtheorem{prp}[thm]{Proposition}

\theoremstyle{definition}
\newtheorem{dfn}[thm]{Definition}

\theoremstyle{remark}
\newtheorem{rmk}[thm]{Remark}
\newtheorem{exm}[thm]{Example}

\numberwithin{equation}{section}

\makeatletter
\@namedef{subjclassname@2020}{%
  \textup{2020} Mathematics Subject Classification}
\makeatother

\tikzstyle{vertex}=[circle]
\tikzstyle{goto}=[->,shorten >=1pt,>=stealth,semithick]

 % asymptotic equivalence

\begin{document}

%\date{\today}
\title[Katsura-Exel-Pardo actions and their limit spaces]{Katsura-Exel-Pardo self-similar actions, Putnam's binary factors, and their limit spaces}

\author[Hume]{Jeremy B. Hume}
\author[Whittaker]{Michael F. Whittaker}

\address{Jeremy B. Hume and Michael F.
Whittaker \\ School of Mathematics and Statistics  \\
University of Glasgow}
\email{jeremybhume@gmail.com, Mike.Whittaker@glasgow.ac.uk}

\thanks{Hume was supported by the European Research Council (ERC) under the European Union's Horizon 2020 research and innovation programme (grant agreement No. 817597).\\ \indent We'd like to thank Enrique Pardo for comments on an early draft.}

\begin{abstract}
We show that the dynamical system associated by Putnam to a pair of graph embeddings is identical to the shift map on the limit space of a self-similar groupoid action on a graph. Moreover, performing a certain out-split on said graph gives rise to a Katsura-Exel-Pardo groupoid action on the out-split graph whose associated limit space dynamical system is conjugate to the previous one. We characterise the self-similar properties of these groupoids in terms of properties of their defining data, two matrices $A$, $B$. We prove a large class of the associated limit spaces are bundles of circles and points which fibre over a totally disconnected space, and the dynamics restricted to each circle is of the form $z\to z^{n}$. Moreover, we find a planar embedding of these spaces, thereby answering a question Putnam posed in his paper.
\end{abstract}

\subjclass[2020]{ 37B05, 37B10 (Primary); 37A55 (Secondary)}

\keywords{Self-similar groupoid; limit space; subshifts of finite type; Smale space; $C^*$-algebra; Kirchberg algebra}

\maketitle

\centerline{\em Dedicated to our late colleague, mentor, and friend Iain Raeburn.}

\section{Introduction}\label{sec:intro}

One of the most beautiful aspects of self-similar group theory are its connections, discovered by Nekrashevych \cite{Nekrashevych:Self-similar}, to the theory of dynamical systems. To any contracting self-similar group, one can construct its \emph{limit dynamical system}, which is a self map of a compact metric space whose dynamical properties are governed by the properties of the self-similar group, and vice-versa. Many natural dynamical systems arise as examples, for instance hyperbolic post-critically finite rational maps acting on their Julia sets. This description was used, for instance, to solve the Twisted Rabbit Problem \cite{BN}.

We prove that Putnam's binary factors of subshifts of finite type \cite{Putnam:Binary} arise as limit dynamical systems of self-similar \emph{groupoids}.

An \emph{embedding pair} consists of two directed graphs $H$ and $E$, along with a pair of embeddings $\xi^0, \xi^1:H \hookrightarrow E$ satisfying certain conditions. Putnam's factor is obtained by identifying two (one-sided) infinite paths in $E$ that arise from the same path in $H$ embedded along two binary sequences of embeddings that are related through carry over in binary addition. He then proves that the natural extension of these dynamical systems are Smale spaces, computes their homology, as well as the $K$-theory of the associated $C^{*}$-algebras. As a corollary, Putnam proves that these $C^{*}$-algebras exhaust all possible Ruelle algebras arising from irreducible Smale spaces \cite[Theorem~6.5]{Putnam:Binary}. 

We show that Putnam's construction naturally defines a self-similar groupoid action on a graph. Moreover, we prove, in Theorem \ref{thm:equality}, that the limit dynamical system of the self-similar groupoid action is identical (not just conjugate) to Putnam's dynamical system. Through our approach to studying these systems, we are able to remove one of Putnam's standing hypotheses and weaken his requirement of the graph $E$ being primitive to having no sources, see Section \ref{Putnam's construction}. 

An interesting corollary of our construction that follows from \cite[Theorem~6.5]{Putnam:Binary} and \cite[Corollary~8.5]{BBGHSW} is that the class of (stabilised) $C^{*}$-algebras associated to contracting and regular self-similar groupoids acting on strongly connected finite graphs is equal to the class of Ruelle algebras associated to irreducible Smale spaces.

This result should be compared with Katsura's seminal paper \cite{Katsura:class_II}, where he proved that all Kirchberg algebras can be realised, up to strong Morita equivalence, as certain $C^*$-algebras associated with two integer matrices $A$, $B$. While studying these algebras, Exel and Pardo \cite{Exel-Pardo:Self-similar} realised that they arise from self-similar group actions, with the finite alphabet replaced by a (possibly infinite) graph. 

Using Kitchens' out-split construction for graphs \cite{Kitchens}, which we extend to self-similar groupoid actions, we prove that every self-similar groupoid action coming from an embedding pair can be out-split to a Katsura-Exel-Pardo action. Therefore, Putnam's dynamical systems are topologically conjugate to the limit dynamical system of a Katsura-Exel-Pardo action. This explains the similarity in the $K$-theory results \cite[Theorem 6.1 and 6.2]{Putnam:Binary} and Theorem \ref{Kirchberg} due to Katsura and Exel-Pardo.

The matrices $A$, $B$ arising from this out-split satisfy some relations between them that ensure the associated self-similar groupoid is contracting and regular. These conditions are dynamically important, as contracting guarantees the limit space is Hausdorff and, assuming contracting, regular is equivalent to the limit dynamical system being an expanding local homeomorphism (see \cite[Proposition~4.8]{BBGHSW}). Moreover, the $KK$-duality results of \cite{BBGHSW} may be applied in this setting. We characterise these properties in terms of properties of the matrices $A$ and $B$.
 
Given a self-similar groupoid action on a graph, we show the connected component space of the limit space can be identified with the quotient of the infinite path space of the graph by a natural equivalence relation. We use this description to prove, for a large class of Katsura-Exel-Pardo actions, the connected components in their limit spaces are circles and points, analogous to Putnam's result \cite[Corollary~7.7]{Putnam:Binary}. We identify the dynamics on the circle components as $z\to z^{n}$, where $n\in\mathbb{N}$ can vary, dependent on the value of $z$ under a natural factor map to a subshift of finite type. Thus, ``Katsura-Exel-Pardo systems'' exhibit interesting interplay of $0$ and $1$ dimensional dynamics.

This description provided the impetus to look for a planar embedding of the limit space for such Katsura-Exel-Pardo actions, to better understand how the circles and points are configured. The embedding is reminiscent of a solar system trajectory, with planets orbiting a star and moons orbiting the planet, but ad infinitum. As a corollary, we prove that Putnam's dynamical systems embed into the plane, answering Putnam's question (see \cite[Question 7.10]{Putnam:Binary}).

The paper is organised as follows. In Section \ref{Self-similar groupoid actions on graphs and unital Katsura algebras} we provide background on self-similar groupoid actions and their limit dynamical systems. Section \ref{Katsura groupoids} introduces Katsura's construction and Exel and Pardo's realisation of these as Katsura-Exel-Pardo groupoid actions on graphs, and contains our matrix characterisation of when they are contracting and regular. Section \ref{Putnam's construction} introduces Putnam's binary factors of subshifts of finite type, and we prove that they are limit dynamical systems of certain self-similar groupoid actions on graphs. Section \ref{out-splits} defines out-splits and uses them, along with the previous result, to show that Putnam's dynamical systems are topologically conjugate to the limit space dynamical systems of Katsura-Exel-Pardo actions over certain out-split graphs. Section \ref{Bundles of odometers} contains our results on the connected components of limit spaces. The final section proves that regular Katsura-Exel-Pardo systems with $B \in M_N(\{0,1\})$ embed into the plane, and hence, so do Putnam's dynamical systems.

\section{Self-similar groupoid actions on graphs}\label{Self-similar groupoid actions on graphs and unital Katsura algebras}

In this section we describe self-similar groupoid actions on finite directed graphs and their properties. These generalise the notion of a self-similar group introduced by Bartholdi, Grigorchuk, Nekrashevych and others.

\subsection{Directed graphs and their path spaces} We quickly introduce directed graphs, for a detailed treatment see Raeburn's seminal book \cite{Raeburn:Graph_algebras}.

A \emph{directed graph} $E$ is a quadruple $E = (E^0, E^1, r, s)$ consisting of two sets $E^0$ and $E^1$ along with two functions $r,s : E^1 \to E^0$ called the range and source map, respectively. Elements in $E^0$ are \emph{vertices} and elements in $E^1$ are \emph{edges}. We think of an edge $e$ as a directed arrow from its source vertex $s(e)$ to its range $r(e)$.

Perhaps the most important aspect of a directed graph is its path space. A finite \emph{path} $\mu$ in a directed graph $E$ is either a vertex $\mu = v$, or a finite sequence of edges $\mu = e_1 \dots e_n$ such that $s(e_i) = r(e_{i+1})$ for all $i\leq n-1$. Let $E^n = \{e_1 \dots e_n \mid e_i \in E^1, s(e_i) = r(e_{i+1})\}$ denote the paths of length $n$ in $E$. We then let $E^* \coloneqq \bigcup^\infty_{n=0} E^n$ denote the set of all finite paths in $E$. For a path $\mu=\mu_1 \ldots \mu_n$ in $E^n$, let $r(\mu) =r(\mu_1)$ and $s(\mu)=s(\mu_n)$. For $\mu \in E^*$ and $X \subseteq E^*$, we define 
\[ \mu X = \{\mu \nu \mid \nu
\in X, s(\mu) = r(\nu)\} \quad \text{ and } \quad X \mu = \{\nu\mu \mid \nu \in X, r(\mu) =
s(\nu)\}.
\]
We then have $\mu X \nu = \mu X \cap X \nu$.

A graph is \emph{finite} if both $E^0$ and $E^1$ are finite. A graph is \emph{strongly connected} if, for all $v,w \in E^0$, the set $v E^* w$ is nonempty. Notice that if $E$ is strongly connected, then $vE^1$ and $E^1v$ are nonempty for all $v \in E^0$, unless $E$ is the graph with one vertex and no edges. We say a vertex $v$ in a graph $E$ is a \emph{source} if $vE^1=\emptyset$ and a \emph{sink} if $E^1v=\emptyset$.

In this paper, we will need to work with both left-, right-, and bi-infinite paths in a graph $E$. Thus, we define:
\begin{itemize}
\item $E^{+\infty} \coloneqq  \{e_1 e_2 e_3 \dots \mid e_i \in E^1,\text{ } s(e_i) = r(e_{i+1})\text{ for all }i\}$,
\item $E^{-\infty} \coloneqq  \{\dots e_{-3} e_{-2} e_{-1} \mid e_i \in E^1,\text{ } s(e_i) = r(e_{i+1})\text{ for all }i\}$, and
\item $E^\ZZ \coloneqq  \{\dots e_{-2} e_{-1} e_0 e_1 e_2 \dots \mid e_i \in E^1,\text{ } s(e_i) = r(e_{i+1})\text{ for all }i\}$.
\end{itemize}
As usual, we endow these spaces with the product topology, with a basis of cylinder sets. These are indexed by finite paths in each of the three spaces, so we will distinguish them as follows. Define for
\begin{itemize}
\item $E^{+\infty}$: when $\mu \in E^n$, let $\rZ{\mu} \coloneqq \{x \in E^{+\infty} \mid x_1\dots x_n = \mu\}$, for
\item $E^{-\infty}$:  when $\mu \in E^n$, let $\lZ{\mu} \coloneqq \{x \in E^{-\infty} \mid x_{-n} \dots x_{-1} = \mu\}$, and for
\item $E^\ZZ$: when $n \ge 0$ and $\mu  \in E^{2n+1}$, let $\bZ{\mu} \coloneqq \{x \in E^\ZZ \mid x_{-n} \dots x_n = \mu\}$.
\end{itemize}
If $x$ is an element in any of the spaces above and $m < n \in \ZZ$ appropriately chosen for the space in question, we define
\[
\mu[m,n] \coloneqq \mu_m \mu_{m+1} \ldots \mu_n.
\]

\subsection{Self-similar actions of groupoids on graphs}\label{subsec:ssa}

Katsura-Exel-Pardo actions are a type of self-similar groupoid action on a graph, so we take a few paragraphs to introduce them. For further details see \cite{Laca-Raeburn-Ramagge-Whittaker:Equilibrium} . 

Suppose $E$ is a directed graph. Given $v,w  \in E^0$, a \emph{partial isomorphism} of $E^*$ is a bijection $g : vE^* \to wE^*$ that is length and path preserving in the sense that $|g(\mu)|=|\mu|$ and $g(\mu e) \in
g(\mu)E^1$ for all $\mu  \in E^*$ and $e  \in E^1$ satisfying $s(\mu) = r(e)$. We shall use the notation $g\cdot\mu \coloneqq g(\mu)$ to reduce the number of parentheses. These two conditions are equivalent to the following property: $g$ is length preserving, and for every $\mu\in vE^{*}$, there is a partial isomorphism $h:s(\mu)E^{*}\to s(g
\cdot \mu)E^{*}$ such that 
\begin{equation}\label{eq:restrictionpiso}
    g \cdot(\mu\nu) = (g\cdot \mu)(h \cdot \nu)\text{ for all $\nu \in s(\mu)E^*$.}
\end{equation}

We write $h = g|_{\mu}$, as it is uniquely defined by the above property, and call it the \emph{restriction} of $g$ to $\mu$.

Let $\PI(E^*)$ denote the set of all partial isomorphisms of $E^*$, which is itself a groupoid with units $\id_v: vE^* \to vE^*$ defined by $\id_v(\mu)=\mu$ for all $\mu \in vE^*$ and multiplication given by composition of maps. Since units are associated with vertices, we go ahead and identify the unit space of $\PI(E^*)$ with $E^0$. The \emph{isotropy group} of a unit $v \in E^0$ is the set of partial isomorphisms from $vE^*$ to $vE^*$.

Given a partial isomorphism $g : vE^* \to wE^*$, define its \emph{domain} to be $d(g) = v$ and its \emph{codomain} to be $c(g) = w$. That is, we are renaming the range and source maps in the groupoid $\PI(E^*)$ by the terms codomain and domain since the symbols $s$ and $r$ are already in use. 

Restriction and multiplication of elements satisfy several relations, which we record in the following lemma. 

\begin{lem}[{\cite[Lemma~3.4 and Proposition~3.6]{Laca-Raeburn-Ramagge-Whittaker:Equilibrium}}]\label{properties SSA}
Let $E$ be a finite directed graph. For $(g,h) \in
\PI(E^*)^{(2)}$, $\mu \in d(g)E^*$, $\nu \in s(\mu)E^*$ and $\eta \in c(g)E^*$, we
have
\begin{enumerate}
    \item $r(g \cdot \mu) = c(g)$ and $s(g \cdot \mu) = g|_\mu \cdot
        s(\mu)$;
    \item $g|_{\mu\nu} = (g|_\mu)|_\nu$;
    \item $\id_{r(\mu)}|_\mu = \id_{s(\mu)}$;
    \item $(hg)|_\mu = (h|_{g \cdot \mu})(g|_\mu)$; and
    \item $g^{-1}|_{\eta} = (g|_{g^{-1} \cdot \eta})^{-1}$.
\end{enumerate}
\end{lem}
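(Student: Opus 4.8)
The plan is to reduce every claim to the \emph{uniqueness} of the restriction: by definition $g|_\mu$ is the one and only partial isomorphism $h\colon s(\mu)E^*\to s(g\cdot\mu)E^*$ satisfying \eqref{eq:restrictionpiso}. Thus, to identify a given restriction with some concrete partial isomorphism, I need only check that the latter has the correct domain and codomain and obeys the factorisation identity; uniqueness then forces equality. This single principle drives all five parts.

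I would first dispatch (1) and (3) directly. For (1), $g$ maps $d(g)E^*$ into $c(g)E^*$ by definition, so $r(g\cdot\mu)=c(g)$ is immediate, while $s(g\cdot\mu)=g|_\mu\cdot s(\mu)$ merely records that $g|_\mu$ sends the vertex $s(\mu)$ to its own codomain $s(g\cdot\mu)$. For (3), the identity fixes $\mu$, so \eqref{eq:restrictionpiso} reads $\mu\nu=\mu(\id_{r(\mu)}|_\mu\cdot\nu)$, forcing $\id_{r(\mu)}|_\mu\cdot\nu=\nu$ for every $\nu$, that is, $\id_{r(\mu)}|_\mu=\id_{s(\mu)}$.

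Next I would treat (2) and (4) by a common template: expand the action on a longer path using \eqref{eq:restrictionpiso} twice, then re-read the result as a single factorisation and invoke uniqueness. For (2), applying the defining identity first at $\mu$ and then at $\nu$ gives $g\cdot(\mu\nu\rho)=\big(g\cdot(\mu\nu)\big)\big((g|_\mu)|_\nu\cdot\rho\big)$, so uniqueness of $g|_{\mu\nu}$ yields the cocycle relation. For (4) I would first note the two factors are composable, since $g|_\mu$ lands in $s(g\cdot\mu)E^*$, which is exactly where $h|_{g\cdot\mu}$ is defined, and then peel off $g$ and then $h$ to get $(hg)\cdot(\mu\nu)=\big((hg)\cdot\mu\big)\big((h|_{g\cdot\mu})(g|_\mu)\cdot\nu\big)$.

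The one step needing more than a single expansion is (5), which I expect to be the main (if still modest) obstacle, as it is where the groupoid inverse meets restriction. Setting $\mu=g^{-1}\cdot\eta$, so that $\eta=g\cdot\mu$, I would feed the composites $g^{-1}g=\id_{d(g)}$ and $gg^{-1}=\id_{c(g)}$ into the chain rule (4) and simplify the resulting identity restrictions using (3). This produces $(g^{-1}|_\eta)(g|_\mu)=\id_{s(\mu)}$ and $(g|_\mu)(g^{-1}|_\eta)=\id_{s(\eta)}$, exhibiting $g^{-1}|_\eta$ as a two-sided inverse of $g|_\mu$, whence $g^{-1}|_\eta=(g|_{g^{-1}\cdot\eta})^{-1}$. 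The only genuine care required throughout is bookkeeping the domains and codomains so that each $\id$ sits at the correct vertex.
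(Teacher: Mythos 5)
Your proof is correct. The paper does not prove this lemma itself but quotes it from \cite{Laca-Raeburn-Ramagge-Whittaker:Equilibrium}*{Lemma 3.4 and Proposition 3.6}, and your argument is essentially the standard one found there: everything reduces to the uniqueness of the restriction in \eqref{eq:restrictionpiso} (together with the domain/codomain bookkeeping you rightly flag), with (1) and (3) read off directly, (2) and (4) obtained by expanding the factorisation identity twice, and (5) deduced by feeding $g^{-1}g=\id_{d(g)}$ and $gg^{-1}=\id_{c(g)}$ into (4) and simplifying via (3) to exhibit $g^{-1}|_{\eta}$ as a two-sided inverse of $g|_{g^{-1}\cdot\eta}$.
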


A groupoid $G$ with unit space $E^0$ \emph{acts on} $E^*$ if there is a groupoid homomorphism $\phi : G \to \PI(E^*)$ that restricts to the identity map on $E^0$. Define $\text{Ker}(\phi) = \phi^{-1}(E^{0})$, which is a normal sub-groupoid of $G$. We say $G$ acts \emph{faithfully} on $E^*$ if $\phi$ is injective, or in other words $\text{Ker}(\phi) = E^0$. We will write $g \cdot \mu$ in place of $\phi(g)(\mu)$.

\begin{dfn}\label{SSGA}
Suppose $E = (E^0, E^1, r, s)$ is a directed graph, and $G$ is a groupoid with
unit space $E^0$ and a faithful action $\phi:G\to \PI(E^*)$. We say $(G, E)$ is a \emph{self-similar groupoid action} if for every $g\in G$ and $e\in d(g)E^*$, there is $h\in G$ such that $\phi(g)|_{e} = \phi(h)$. We will write $g|_{e} \coloneqq h$ and call it the \emph{restriction of $g$ to $e$}. Using Lemma \ref{properties SSA}, $g|_{\mu}\in G$ for $\mu\in d(g)E^{n}$ is well defined and satisfies
\begin{equation}\label{eq:restriction}
    g \cdot(\mu\nu) = (g\cdot \mu)(g|_{\mu} \cdot \nu)\text{ for all $\nu \in s(\mu)E^*$}
\end{equation}
Moreover, all the conclusions in Lemma \ref{properties SSA} hold for the restriction and multiplication when $\PI(E^*)$ is replaced with $G$.
\end{dfn}

If the action of $G$ on $E^*$ is range preserving, then $G_v \coloneqq \{g \in G \mid d(g)=v\}$ is a group and hence $G$ is a \emph{group bundle}. In that case we say $(G,E)$ is a \emph{self-similar group bundle on $E$}. As we will see later, this holds for all Katsura-Exel-Pardo actions by definition.

It will be useful in this paper to work with non-faithful actions of groupoids by partial isomorphisms whose faithful quotient is self-similar. These should be considered as \emph{non-faithful self-similar groupoids}. However, such a term is an oxymoron, so we shall name them as below.

\begin{dfn}
\label{dfn:action_restriction}
Let $E$ be a directed graph and $G$ a groupoid with unit space $E^{0}$. An \emph{action-restriction pair} for $(G,E)$ is a map
    \begin{equation}
G \tensor[_{d}]{\times}{_{r}} E^1 \ni (g,e) \to (g\cdot e, g|_e)\in  E^1 \tensor[_{s}]{\times}{_{c}} G
\end{equation} 
such that:
\begin{itemize}
\item[(A0)] $r(g\cdot e) = c(g)$ and $d(g|_{e}) = s(e)$ for every $(g,e)\in G \tensor[_{d}]{\times}{_{r}} E^1$;
\item[(A1)] $r(e)\cdot e = e$ and $r(e)|_{e} = s(e)$ for every $e\in vE^{1}$;
\item[(A2)] $gh\cdot e = g
\cdot (h\cdot e)$ for every $(g,h)\in G^{(2)}$, and $e\in d(g)E^1$;
\item[(A3)] $g^{-1}|_{e} = (g|_{g^{-1}\cdot e})^{-1}$ for every $g\in G$ and $e\in c(g)E^1$.

\end{itemize}

We will usually denote an action-restriction pair by $(G,E)$ when there is no ambiguity.
\end{dfn}

If we replace $G$ in the definition above with a finite set $A$ with $E^{0}\subseteq A$ and retracts $c,d:A\to E^{0}$, then this is the notion of an automaton defined in \cite{Laca-Raeburn-Ramagge-Whittaker:Equilibrium}, and one checks that such a pair extends to an action-restriction pair of the free groupoid associated to $(A,c,d)$. Katsura-Exel-Pardo actions are, in general, not generated from an automaton but an action-restriction pair defined above.

If we consider a group $G$, a finite directed graph $E$, an action $\sigma:G\times E^{1}\to E^{1}$ and a \emph{one-cocycle} $\varphi: G\times E^{1}\to E^{1}$ satisfying Exel and Pardo's conditions in \cite[Section~2.3]{Exel-Pardo:Self-similar}, then such a pair defines an action-restriction pair on the group bundle $G\times E^{0} =\{g_{v}\mid g\in G, v\in E^{0}\}$ by $$(g_{v},e)\to (\sigma(g,e), (\varphi(g,e)_{s(e)}).$$

An action-restriction pair for $(G,E)$ defines a (not-necessarily faithful) action $\phi:G\to \PI(E^{*})$: for $g\in G$ and $\mu = e\nu\in d(g)E^{n}$, we inductively define
\begin{equation}\label{eq:actionrestriction}
    g \cdot \mu = (g\cdot e)(g|_{e} \cdot \nu).
\end{equation}
Note that if $g\in\text{Ker}(\phi)$ and $e\in d(g)E^{1}$, then for all $\nu\in d(g|_{e})E^{*} = s(e)E^{*}$, we have $e g|_{e}\cdot\nu = g\cdot (e\nu) = e\nu$ and hence $g|_{e}\cdot\nu = \nu$. Therefore, $g|_{e}\in \text{Ker}(\phi)$.

It follows that if $q:G\to G_{\phi} \coloneqq G/\text{ker}(\phi)$ is the quotient map, then $q(g|_{e}) = q(g)|_{e}$ for all $g\in G$ and $e\in d(g)E^{1} = d(q(g))E^{1}$. Hence, the induced action $G_{\phi}\to \PI(E^*)$ is self-similar.

The case that the self-similar action comes from an action-restriction pair induced by an Exel-Pardo action as above is covered in more detail in \cite[Appendix~A]{Laca-Raeburn-Ramagge-Whittaker:Equilibrium}.

Every self-similar groupoid is an example of an action-restriction pair. Moreover, self-similar groupoids are in one-one correspondence with action-restriction pairs whose induced action as partial isomorphisms is faithful. 

\subsection{Properties and limit spaces of self-similar groupoid actions on graphs}\label{sec: definitions}
In this section we recall standard properties and constructions associated to action-restriction pairs and self-similar group(oid)s acting on the path space of a graph.

\begin{dfn}\label{dfn:contracting} Let $G$ be a groupoid and $E$ a finite directed graph. An action-restriction pair $(G, E)$ is \emph{contracting} if there exists a finite subset $F \subseteq G$ so that, for every $g  \in G$, there is an $n \ge 0$ such that $g|_{\mu}\in F$ for every path $\mu\in E^{k}$, $k\geq n$. Such a subset $F$ is called a \emph{contracting core of $(G,E)$}. The \emph{nucleus} of $(G,E)$ is the set
\[
    \Nn\coloneqq \bigcap\{F \subseteq G \mid F\text{ is a contracting core for }(G, E)\}.
\]
\end{dfn}

\begin{dfn}{\cite[Definition 6.1]{Nekrashevych:Cstar_selfsimilar}}\label{def:regular}
Let $(G, E)$ be an action restriction pair. Then, $(G, E)$ is \emph{regular} if, for every $g  \in G$, there is $K\in\mathbb{N}$ such that if $g\cdot \mu = \mu$ and $|\mu|\geq K$, then $g|_{\mu} = s(\mu)$.
\end{dfn}

Let us see that this notion of regularity is equivalent to that of \cite[Definition 4.1]{BBGHSW} for self-similar groupoids.

\begin{prp}
    Let $(G,E)$ be a self-similar groupoid such that $E$ has no sources. Then, $(G,E)$ is regular if and only if for every $y  \in E^{+\infty}$ such that $g \cdot y = y$, there exists $\mu $ in
$E^*$ such that $y \in \rZ{\mu}$, $g \cdot \mu = \mu$ and $g|_\mu = s(\mu)$.
\end{prp}
\begin{proof}
The ``only if'' direction is immediate, and the ``if'' direction follows from \cite[Lemma~4.4]{BBGHSW}
\end{proof}

We recall from \cite[Section 3]{BBGHSW} (c.f \cite[Chapter 3]{Nekrashevych:Self-similar}) the construction of the limit space from a self-similar groupoid. Let $(G, E)$ be a self-similar groupoid. For $\mu,\nu \in E^{-\infty}$, we say $\mu$ is \textit{asymptotically equivalent} to $\nu$ if there is a finite set $F\subseteq G$ and a sequence $(g_{n})_{n<0}\subseteq F$ such that $d(g_{n}) = r(\mu_{n})$ and $g_{n}\cdot \mu_{n}...\mu_{-1} = \nu_{n}...\nu_{-1}$ for all $n < 0$. We write $\mu \sim_{ae} \nu$. It is shown in \cite[Section 3]{BBGHSW} that $\sim_{ae}$ is an equivalence relation and $\mu \sim_{ae} \nu$ implies $\sigma(\mu)\sim_{ae}\sigma(\nu)$. The quotient space $E^{-\infty}/\sim_{ae}$ is called the \textit{limit space of $(G,E)$} and is denoted $\Jj_{G, E}$. The induced continuous mapping from $(\sigma, E^{-\infty})$ is called the \textit{shift on $\Jj_{G, E}$} and is denoted $\tilde{\sigma}$.

It is shown in \cite[Theorem~4.3]{BBGHSW} that if $(G, E)$ is a contracting and regular self-similar groupoid such that $E$ has no sources, then $(\tilde{\sigma}, \Jj_{G,E})$ is an open, surjective, and positively expansive local homeomorphism.

In many ways, the remainder of this paper is dedicated to understanding, in various contexts, this limit space dynamical system $(\tilde{\sigma}, \Jj_{G,E})$ and the conditions above on $(G,E)$ which give rise to its regularity properties.

\section{Katsura-Exel-Pardo groupoid actions on directed graphs}\label{Katsura groupoids}

The main examples of self-similar groupoid actions on graphs that we are interested in are the Katsura-Exel-Pardo groupoid actions \cite{Katsura:class_II}. Katsura developed a family of Cuntz-Pimsner algebras using two matrices as models for Kirchberg algebras. Exel and Pardo \cite[Section 18]{Exel-Pardo:Self-similar} realised these as self-similar groupoids acting on a graph \cite[Example 7.7]{Laca-Raeburn-Ramagge-Whittaker:Equilibrium}. For Katsura-Exel-Pardo actions we completely characterise when these are contracting and regular, which turns out to be rather subtle.

Let $N \in \mathbb{N}$. A \emph{Katsura pair} is a matrix $A=(A_{ij})$ in $M_N(\mathbb{N})$ and a matrix $B=(B_{ij})$ such that $A_{ij}=0$ implies $B_{ij}=0$. Then $A$ is the adjacency matrix of the graph $E_A$ with
\[
E_A^0=\{1,2,\cdots,N\},\ E_A^1=\{e_{i,j,m}\mid 0\leq m<A_{ij}\},\ r(e_{i,j,m})=i,\
s(e_{i,j,m})=j.
\]
Exel and Pardo \cite{Exel-Pardo:Self-similar} realised that a Katsura pair gives a self-similar action on a graph in the following way. Define a group action $\sigma:\mathbb{Z}\times E_{A}\to E_{A}$ and a one-cocycle $\varphi:\mathbb{Z}\times E_{A}\to \mathbb{Z}$ as follows: write $g\in \mathbb{Z}$ multiplicatively as $g = a^{k}$, $k\in\mathbb{Z}$. Then, $\sigma(a^{k},e_{i,j,m}) = e_{i,j,\hat{m}}$ and $\varphi(a^{k},e_{i,j,m}) = a^{\hat{k}}$, where
\begin{equation}
\label{ExelPardoKatsura_action}
kB_{ij}+m=\hat{k}{A_{ij}}+\hat{m} \text{ and }0\leq \hat{m}< A_{ij}.
\end{equation}
We then obtain an action-restriction pair for ($\mathbb{Z}\times E_{A}^{0}, E_{A})$, defined by 
\begin{equation}\label{eq:Katsura_action-restriction}
(a_{i}^{k},e_{i,j,m})\to (e_{i,j,\hat{m}},a^{\hat{k}}_{j}),
\end{equation}
and we call these \emph{Katsura-Exel-Pardo groupoid actions (KEP-actions)}. See \cite[Appendix~A]{Laca-Raeburn-Ramagge-Whittaker:Equilibrium} for a more careful treatment of these actions and \cite[Example 7.7]{Laca-Raeburn-Ramagge-Whittaker:Equilibrium} for a description of the faithful quotient. We reserve the notation $(G_{B},E_A)$ for the corresponding \emph{faithful} KEP-action associated with a Katsura pair of matrices $A$ and $B$.

These actions realise their importance within $C^*$-algebras due to the following.

\begin{thm}[{\cite[Propositions 2.6, 2.9, and 2.10, Remark 2.8]{Katsura:class_II},\cite[Remark 18.3]{Exel-Pardo:Self-similar}}]\label{Kirchberg}
Let $N \in \mathbb{N}$ and let $A=(A_{ij})$ be a matrix in $M_N(\mathbb{N})$, and $B=(B_{ij})$ a matrix in $M_N(\mathbb{Z})$ such that $A$ has no zero rows and $A_{ij}=0$ implies $B_{ij}=0$. Then, the Cuntz-Pimsner algebra of the associated self-similar groupoid action $\Oo(G_{B},E_A)$ is separable, nuclear, and in the UCT class. The $K$-theory groups of $\Oo(G_{B},E_A)$ are given by
\begin{align*}
K_0(\Oo(G_{B},E_A))&= \text{coker}(I-A) \oplus \text{ker}(I-B) \text{ and } \\ 
K_1(\Oo(G_{B},E_A))&= \text{coker}(I-B) \oplus \text{ker}(I-A).
\end{align*}
Moreover, if $A$ and $B$ also satisfy
\begin{itemize}
\item $A$ is irreducible and $A_{ij}=0\Longrightarrow B_{ij}=0$ and
\item $A_{ii} \geq 2$ and $B_{i,i}=1$ for every $1 \leq i \leq N$,
\end{itemize}
then $\Oo(G_{B},E_A)$ is a unital Kirchberg algebra.
\end{thm}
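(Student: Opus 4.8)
The plan is to deduce every assertion from Katsura's analysis of his algebras in \cite{Katsura:class_II}, once $\Oo(G_B,E_A)$ has been identified with the relevant Katsura algebra; I will write $\Oo_{A,B}$ for the latter. The single piece of genuine work is this identification, and it is supplied by Exel and Pardo's \cite[Remark 18.3]{Exel-Pardo:Self-similar}: the Cuntz-Pimsner algebra of the correspondence attached to the KEP-action $(G_B,E_A)$ is isomorphic to $\Oo_{A,B}$, Katsura's $C^*$-algebra of the topological graph whose vertex space is $E_A^0\times\TT$ and whose edge data is $A$ twisted along the circle direction by the maps $z\mapsto z^{B_{ij}}$. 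Concretely, one matches the generating vertex projections and partial isometries of the two presentations and checks they obey the same relations; the restriction datum $a_j^{\hat k}$ of the self-similar action \eqref{eq:Katsura_action-restriction} is precisely what records the winding matrix $B$ in Katsura's model. Granting this isomorphism, separability and nuclearity are immediate, since $\Oo_{A,B}$ is the Cuntz-Pimsner algebra of a separable correspondence over the commutative (hence nuclear and separable) coefficient algebra $C(E_A^0\times\TT)$, and membership in the UCT class follows because this coefficient algebra lies in the bootstrap class and the Cuntz-Pimsner construction preserves it; these are Katsura's \cite[Proposition~2.6, Remark~2.8]{Katsura:class_II}.

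For the $K$-theory I would run the six-term exact sequence for Cuntz-Pimsner algebras with coefficient algebra $C(E_A^0\times\TT)$, for which $K_0\cong\ZZ^N$ is generated by the units of the $N$ circle summands and $K_1\cong\ZZ^N$ by their circle generators. Since the correspondence carries a degree-zero $KK$-class, the endomorphism $1-[X]$ respects the grading: on $K_0$ it counts edges and equals $I-A$, while on $K_1$ the twist $z\mapsto z^{B_{ij}}$ makes it equal $I-B$. The six-term sequence therefore decouples into the two short exact sequences $0\to\operatorname{coker}(I-A)\to K_0(\Oo_{A,B})\to\ker(I-B)\to 0$ and $0\to\operatorname{coker}(I-B)\to K_1(\Oo_{A,B})\to\ker(I-A)\to 0$, each of which splits because its right-hand term is a subgroup of $\ZZ^N$ and hence free abelian. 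This yields the stated groups and is Katsura's \cite[Propositions~2.9 and 2.10]{Katsura:class_II}.

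For the final assertion I would check the defining properties of a Kirchberg algebra beyond separability and nuclearity, which are already in hand. Unitality is immediate from the isomorphism, since $E_A^0$ is finite, so $C(E_A^0\times\TT)$ is unital and its identity serves as the unit of $\Oo(G_B,E_A)$. Simplicity and pure infiniteness are the content of Katsura's standing hypotheses: irreducibility of $A$ makes the underlying topological graph minimal, $A_{ii}\geq 2$ provides at every vertex a loop with an exit and thereby forces pure infiniteness rather than stable finiteness, and the condition $B_{ii}=1$ supplies the remaining aperiodicity requirement guaranteeing simplicity; together these give \cite[Propositions~2.9 and 2.10]{Katsura:class_II}. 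The main obstacle in the whole argument is the opening identification: one must set up the isomorphism $\Oo(G_B,E_A)\cong\Oo_{A,B}$ carefully enough that Katsura's hypotheses translate into exactly the matrix conditions on $A$ and $B$ stated here, and then verify that the two boundary maps in the six-term sequence come out as $I-A$ and $I-B$ rather than some cocycle-twisted variants.
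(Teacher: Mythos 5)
Your proposal takes essentially the same approach as the paper, which offers no independent proof of this theorem but quotes it directly from the literature: Exel--Pardo's \cite[Remark 18.3]{Exel-Pardo:Self-similar} for the identification $\Oo(G_B,E_A)\cong\Oo_{A,B}$ with Katsura's algebra, and Katsura's \cite[Propositions 2.6, 2.9, 2.10 and Remark 2.8]{Katsura:class_II} for separability, nuclearity, UCT, the $K$-theory, and the unital Kirchberg property. Your fleshed-out sketch of the cited ingredients --- the topological graph over $E_A^0\times\TT$ with winding data $B$, the six-term sequence acting as $I-A$ on $K_0$ and $I-B$ on $K_1$ with splitting because subgroups of $\ZZ^N$ are free, and the hypotheses delivering simplicity and pure infiniteness --- is consistent with how those sources actually argue, so nothing further is needed.
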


Theorem \ref{Kirchberg} outlines several restrictions that can be put on a KEP-action whose associated Cuntz-Pimsner algebra is a unital Kirchberg algebra. We now consider restrictions that arise on the self-similar groupoid side. It will be helpful to first understand the kernel of a KEP-action $\phi_{A,B}:\mathbb{Z}\times E^{0}_{A}\to \PI(E^*_{A})$.

Following Exel and Pardo \cite[p. 1124]{Exel-Pardo:Self-similar}, for $\mu \in E_A^n$, write $\mu=e_{i_0,i_1,r_1}e_{i_1,i_2,r_2} \ldots e_{i_{n-1},i_n,r_n}$. Define
\begin{equation}\label{path in matrix}
A_{\mu} \coloneqq \prod_{t=0}^{n-1} A_{i_t i_{t+1}} \quad \text{ and } \quad B_{\mu}\coloneqq \prod_{t=0}^{n-1} B_{i_t i_{t+1}}.
\end{equation}

That is, $A_{\mu}$ and $B_{\mu}$ are the products of the number of edges through the set of vertices specified by $\mu$.

\begin{prp}
    \label{Katsura_Kernel}
Let $(A, B)$ be a Katsura pair. Then $a^{k}_{i}\in \text{ker}(\phi_{A,B})$ if and only if $\frac{kB_{\mu}}{A_{\mu}}\in\mathbb{Z}$ for all $\mu\in iE_{A}^{*}$.
\begin{proof}
One sees by induction on $n\in\mathbb{N}$ that for $k\in\mathbb{Z}$ and $\mu\in iE_{A}^{n}$, $a^{k}_{i}\cdot\mu = \mu$ if and only if $k\frac{B_{\mu[1,i]}}{A_{\mu[1,i]}}\in\mathbb{Z}$ for all $i\leq n$.
Therefore, $a^{k}_{i}\in \text{ker}(\phi_{A,B})$ if and only if $k\frac{B_{\mu}}{A_{\mu}}\in\mathbb{Z}$ for all $\mu\in iE^{*}_{A}$.
\end{proof}
\end{prp}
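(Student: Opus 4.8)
The plan is to prove the following length-graded refinement by induction on $n$, and then read off the proposition: for $k\in\ZZ$ and $\mu\in iE_A^n$, one has $a_i^k\cdot\mu=\mu$ if and only if $kB_{\mu[1,\ell]}/A_{\mu[1,\ell]}\in\ZZ$ for every $1\le\ell\le n$. Granting this, the proposition is immediate: $a_i^k\in\text{ker}(\phi_{A,B})$ means $a_i^k\cdot\mu=\mu$ for every $\mu\in iE_A^*$, and since the set of nonempty prefixes of paths starting at $i$ is exactly the set of nonempty paths starting at $i$ (and the condition is vacuous on the trivial vertex path, where $A_\mu$ and $B_\mu$ are empty products equal to $1$), the refined criterion collapses to $kB_\mu/A_\mu\in\ZZ$ for all $\mu\in iE_A^*$.

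For the base case $n=1$, I would write $\mu=e_{i,j,m}$ and use the defining formula \eqref{eq:Katsura_action-restriction}: $a_i^k\cdot e_{i,j,m}=e_{i,j,\hat{m}}$, where $kB_{ij}+m=\hat{k}A_{ij}+\hat{m}$ with $0\le\hat{m}<A_{ij}$. As $0\le m<A_{ij}$ too, uniqueness of the quotient and remainder in Euclidean division shows $\hat{m}=m$ precisely when $A_{ij}\mid kB_{ij}$, i.e. when $kB_{ij}/A_{ij}\in\ZZ$; since $A_{e_{i,j,m}}=A_{ij}$ and $B_{e_{i,j,m}}=B_{ij}$ by \eqref{path in matrix}, this is the asserted $\ell=1$ condition.

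For the inductive step, write a length-$(n+1)$ path as $\mu=e\nu$ with $e=e_{i,j,m}$ and $\nu\in jE_A^n$. The action-restriction formula \eqref{eq:actionrestriction} gives $a_i^k\cdot(e\nu)=(a_i^k\cdot e)(a_i^k|_e\cdot\nu)=e_{i,j,\hat{m}}(a_j^{\hat{k}}\cdot\nu)$, so $a_i^k\cdot\mu=\mu$ if and only if $\hat{m}=m$ and $a_j^{\hat{k}}\cdot\nu=\nu$. By the base case, $\hat{m}=m$ is equivalent to the $\ell=1$ condition $kB_{ij}/A_{ij}\in\ZZ$, and when it holds $\hat{k}=kB_{ij}/A_{ij}$. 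I would then apply the induction hypothesis to $a_j^{\hat{k}}$ and $\nu$, and use the multiplicativity $A_{\mu[1,\ell]}=A_{ij}A_{\nu[1,\ell-1]}$ and $B_{\mu[1,\ell]}=B_{ij}B_{\nu[1,\ell-1]}$ from \eqref{path in matrix} to see that $\hat{k}\,B_{\nu[1,\ell-1]}/A_{\nu[1,\ell-1]}$ telescopes to $kB_{\mu[1,\ell]}/A_{\mu[1,\ell]}$; hence the conditions supplied by the hypothesis for $\ell=2,\dots,n+1$ match, and together with the $\ell=1$ condition they give exactly the refined criterion for $\mu$.

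The main obstacle is purely bookkeeping: the restriction exponent $\hat{k}$ equals $kB_{ij}/A_{ij}$ only once the first-edge condition $\hat{m}=m$ is in force, so the $\ell=1$ prefix condition must be isolated and established before the induction hypothesis---which concerns $\hat{k}$---is invoked with the correct exponent. The telescoping cancellation that matches $\hat{k}\,B_{\nu[1,\ell-1]}/A_{\nu[1,\ell-1]}$ with $kB_{\mu[1,\ell]}/A_{\mu[1,\ell]}$ rests entirely on the multiplicative definitions of $A_\mu$ and $B_\mu$, and this is the step where index shifts are easiest to get wrong.
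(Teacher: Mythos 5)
Your proposal is correct and takes essentially the same approach as the paper: the paper's proof is exactly the induction on path length that you carry out in detail, showing $a_i^k\cdot\mu=\mu$ if and only if $kB_{\mu[1,\ell]}/A_{\mu[1,\ell]}\in\ZZ$ for every prefix, and then deducing the kernel description since prefixes of paths in $iE_A^*$ are again paths in $iE_A^*$. Your expansion --- the base case via uniqueness of quotient and remainder in Euclidean division, isolating the first-edge condition $\hat{m}=m$ before invoking the inductive hypothesis with $\hat{k}=kB_{ij}/A_{ij}$, and the telescoping identity $\hat{k}\,B_{\nu[1,\ell-1]}/A_{\nu[1,\ell-1]}=kB_{\mu[1,\ell]}/A_{\mu[1,\ell]}$ --- just supplies the bookkeeping the paper leaves implicit (and your index $\ell$ fixes the paper's overloaded use of $i$ in ``$\mu[1,i]$'').
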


Note that for $\mu\in E^{n}$ such that $B_{\mu}\neq 0$, $k\frac{B_{\mu}}{A_{\mu}}\in\mathbb{Z}$ if and only if $\frac{A_{\mu}}{\text{gcd}(A_{\mu}, |B_{\mu}|)}$ divides $k$. So, by Proposition \ref{Katsura_Kernel}, the group $(G_{B})_{i}$ is finite if and only if $\text{max}_{\mu\in iE^{*}: B_{\mu}\neq 0}\frac{A_{\mu}}{\text{gcd}(A_{\mu}, |B_{\mu}|)} <\infty$ and its cyclic order $o_{i}$ is the smallest $k\in\mathbb{Z}$ such that $\frac{A_{\mu}}{\text{gcd}(A_{\mu},|B_{\mu}|)}$ divides $k$, for all $B_{\mu}\neq 0$. Thus, $o_{i} = \text{lcm}(\{\frac{A_{\mu}}{\text{gcd}(A_{\mu},|B_{\mu}|)}\mid\mu\in iE_{A}^{*}: B_{\mu}\neq 0\})$. We have the following corollary.

\begin{cor}
\label{invariance}
Suppose $(G_{B}, E_{A})$ is a KEP-action, and define 
\[
 E_{A,<\infty}^{0} \coloneqq \{i\in E^{0}_{A}\mid (G_{B})_{i} \text{ is finite.}\}.
 \] 
Then $E_{A,<\infty}^{0}$ is invariant in the sense that if $e\in E^{1}$ satisfies $B_{e}\neq 0$ and $r(e)\in E_{A,<\infty}^{0}$, then $s(e)\in E_{A,<\infty}^{0}$.
\end{cor}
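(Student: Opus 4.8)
The plan is to reduce the claim to the finiteness criterion recorded in the paragraph following Proposition~\ref{Katsura_Kernel}: for a vertex $v$, the group $(G_B)_v$ is finite precisely when $\frac{A_\mu}{\gcd(A_\mu,|B_\mu|)}$ remains bounded as $\mu$ ranges over the paths in $vE_A^*$ with $B_\mu\neq 0$, in which case $o_v=\mathrm{lcm}\{\frac{A_\mu}{\gcd(A_\mu,|B_\mu|)}\mid \mu\in vE_A^*,\,B_\mu\neq0\}$. Writing $i=r(e)$ and $j=s(e)$, the hypotheses give $B_e=B_{ij}\neq0$ and finiteness of the supremum at $i$; I must produce a uniform bound for $\frac{A_\nu}{\gcd(A_\nu,|B_\nu|)}$ over all $\nu\in jE_A^*$ with $B_\nu\neq0$.

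First I would use multiplicativity along concatenation. For such a $\nu$, the path $e\nu$ lies in $iE_A^*$ and satisfies $A_{e\nu}=A_{ij}A_\nu$ and $B_{e\nu}=B_{ij}B_\nu$; since $B_{ij}\neq0$ and $B_\nu\neq0$, we have $B_{e\nu}\neq0$, so $\frac{A_{e\nu}}{\gcd(A_{e\nu},|B_{e\nu}|)}$ divides $o_i$. It therefore suffices to dominate the quantity at $\nu$ by the one at $e\nu$, the only discrepancy between the two being the fixed factors $A_{ij}$ and $|B_{ij}|$.

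The key step is a prime-by-prime estimate. Writing $v_p$ for the $p$-adic valuation, one has $v_p\big(\frac{A_\nu}{\gcd(A_\nu,|B_\nu|)}\big)=\max\{0,\,v_p(A_\nu)-v_p(|B_\nu|)\}$, and similarly for $e\nu$. Feeding in $A_{e\nu}=A_{ij}A_\nu$, $|B_{e\nu}|=|B_{ij}||B_\nu|$ and $v_p(A_{ij})\geq0$ (note $A_{ij}\geq1$ as $B_{ij}\neq0$), a short case split on the sign of $v_p(A_{e\nu})-v_p(|B_{e\nu}|)$ yields
\[
\max\{0,\,v_p(A_\nu)-v_p(|B_\nu|)\}\ \leq\ v_p(|B_{ij}|)+\max\{0,\,v_p(A_{e\nu})-v_p(|B_{e\nu}|)\}.
\]
As this holds at every prime, it gives the divisibility $\frac{A_\nu}{\gcd(A_\nu,|B_\nu|)}\ \big|\ |B_{ij}|\cdot\frac{A_{e\nu}}{\gcd(A_{e\nu},|B_{e\nu}|)}$, and since the last factor divides $o_i$, the left-hand side divides the fixed integer $|B_{ij}|\,o_i$ for every admissible $\nu$. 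Hence the supremum at $j$ is finite, so $j\in E_{A,<\infty}^0$; as a bonus one even reads off $o_j\mid |B_{ij}|\,o_i$.

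I expect the displayed inequality to be the only real obstacle. The subtlety is that multiplying $A_\nu$ by the fixed factor $A_{ij}$ may create fresh cancellation against $|B_{ij}||B_\nu|$ inside the gcd, so one cannot naively divide through; what saves the argument is that the extra factors are independent of $\nu$, so the resulting loss is absorbed into the constant $|B_{ij}|$. Making this precise is exactly the two-case valuation check above, and it is the only step requiring care.
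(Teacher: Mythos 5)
Your proof is correct, and while it follows the same overall reduction as the paper, the key technical step is executed by a genuinely different lemma. Both arguments pass to the boundedness criterion for finiteness of $(G_B)_i$ extracted from Proposition~\ref{Katsura_Kernel}, use multiplicativity $A_{e\nu}=A_eA_\nu$, $B_{e\nu}=B_eB_\nu$, and compare the ratio $\frac{A_\nu}{\gcd(A_\nu,|B_\nu|)}$ with the one at $e\nu$, absorbing the possible extra cancellation in the gcd into a constant depending only on the edge $e$. The paper does this by writing $\gcd(A_{e\nu},|B_{e\nu}|)$ as $\gcd(A_e,|B_e|)\gcd(A_\nu,|B_\nu|)\cdot m$ with $m=\gcd(A'_eA'_\nu,|B'_e||B'_\nu|)$, showing $m$ divides $A_eB_e$ via coprimality of $A'_\nu$ and $B'_\nu$, which yields the two-sided inequality \eqref{1} with constants $C$ and $D=\max_e |B_e|\gcd(A_e,|B_e|)$, and then runs the contrapositive (unbounded at $s(e)$ forces unbounded at $r(e)$). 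Your $p$-adic valuation check---which is sound: in the case $v_p(A_{e\nu})\geq v_p(|B_{e\nu}|)$ the right side is $v_p(A_{ij})+v_p(A_\nu)-v_p(|B_\nu|)$, and in the opposite case $v_p(A_\nu)-v_p(|B_\nu|)< v_p(|B_{ij}|)-v_p(A_{ij})\leq v_p(|B_{ij}|)$, both using $v_p(A_{ij})\geq 0$---replaces this with the exact divisibility $\frac{A_\nu}{\gcd(A_\nu,|B_\nu|)}\,\big|\,|B_e|\cdot\frac{A_{e\nu}}{\gcd(A_{e\nu},|B_{e\nu}|)}$, which is sharper: your constant $|B_e|$ beats the paper's $D$, and combining with the cyclic order $o_i$ makes the argument direct rather than contrapositive, with the quantitative bonus $o_j\mid |B_e|\,o_i$ that the paper does not record. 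The only content of the paper's proof that yours omits is the reverse comparison (the constant $C$ side of \eqref{1}), which is not needed for the corollary.
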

\begin{proof}
Let $\nu\in E^{*}$ and $e\in E^{1}$ be such that $s(e) = r(\nu)$ and $B_{e\nu}\neq 0$. We have $A_{e\nu} = A_{e}A_{\nu}$, $|B_{e\nu}| = |B_{e}||B_{\nu}|$ and therefore  $\text{gcd}(A_{e},|B_{e}|)\text{gcd}(A_{\nu}, |B_{\nu}|)$ divides $A_{e\nu}$ and $B_{e\nu}$, so that $\text{gcd}(A_{e},|B_{e}|)\text{gcd}(A_{\nu}, |B_{\nu}|)$ divides $\text{gcd}(A_{e\nu}, |B_{e\nu}|)$. If we let $A'_{\nu} = \frac{A_{\nu}}{\text{gcd}(A_{\nu}, |B_{\nu}|)}$ and $B'_{\nu} = \frac{B_{\nu}}{\text{gcd}(A_{\nu}, |B_{\nu}|)}$, then $$m\coloneqq \frac{\text{gcd}(A_{e\nu}, |B_{e\nu}|)}{\text{gcd}(A_{e},|B_{e}|)\text{gcd}(A_{\nu}, |B_{\nu}|)} = \text{gcd}(A'_{e}A'_{\nu}, |B'_{e}||B'_{\nu}|).$$
Since $\text{gcd}(A'_{\nu}, B'_{\nu}) = 1$, the factors in $m$ that divide $B'_{\nu}$ must divide $A_{e}$ and the factors that divide $A'_{\nu}$ must divide $B_{e}$. From this, we see that $m$ divides $A_{e}B_{e}$ and therefore
$$\text{gcd}(A_{e},|B_{e}|)\text{gcd}(A_{\nu}, |B_{\nu}|)\leq \text{gcd}((A_{e\nu}, |B_{e\nu}|))\leq A_{e}|B_{e}|\text{gcd}(A_{e},|B_{e}|)\text{gcd}(A_{\nu}, |B_{\nu}|).$$
So, if we let $C = \text{max}_{e\in E^{1}}\frac{A_{e}}{\text{gcd}(A_{e},|B_{e}|)}$ and $D = \text{max}_{e\in E^{1}}|B_{e}|\text{gcd}(A_{e}, |B_{e}|)$, then
\begin{equation}
    \label{1}
    C\frac{A_{\nu}}{\text{gcd}(A_{\nu}, |B_{\nu}|)} \geq \frac{A_{e\nu}}{\text{gcd}(A_{e\nu}, |B_{e\nu}|)}\geq \frac{1}{D}\frac{A_{\nu}}{\text{gcd}(A_{\nu}, |B_{\nu}|)}.
\end{equation}

In particular,  if $\text{max}_{\nu\in s(e)E^{*}:B_{\nu}\neq 0}\frac{A_{\nu}}{\text{gcd}(A_{\nu}, |B_{\nu}|)} = \infty$, then
$$\infty = \text{max}_{\nu\in s(e)E^{*}:B_{\nu}\neq 0}\frac{A_{e\nu}}{\text{gcd}(A_{e\nu}, |B_{e\nu}|)}\leq \text{max}_{\mu\in r(e)E^{*}:B_{\mu}\neq 0}\frac{A_{\mu}}{\text{gcd}(A_{\mu}, |B_{\mu}|)}.$$ This proves that, for $B_{e}\neq 0$, if $s(e)\notin E^{0}_{A,<\infty}$, then $r(e)\notin E^{0}_{A,<\infty}$.
\end{proof}

Now, define
\[
E^{0}_{A,\infty} \coloneqq \{i\in E_{A}^{0}\mid  (G_{B})_{i} = \mathbb{Z}\},
\]
and note that $E^{0}_{A,\infty} = E^{0}_{A}\setminus E^{0}_{A, <\infty}$. Similarly, define
\[
E^{1}_{A,\infty} \coloneqq \{e\in E^{1}_{A}\mid s(e)\in E^{0}_{A,\infty}\text{ and } B_{e}\neq 0\}.
\]
By Corollary \ref{invariance}, $r(E^{1}_{A,\infty})\subseteq E^{0}_{A,\infty}$, so $E_{A,\infty} \coloneqq (E^{0}_{A,\infty}, E^{1}_{A,\infty}, r ,s)$ is a sub-graph of $E_{A}$, and the action of $G_{B,\infty} \coloneqq \{g\in G: s(g)\in E^{0}_{A,\infty}\}$ on $E^{*}_{A}$ restricts to an action $\phi_{A,B,\infty}:G_{B,\infty}\to \PI(E^{*}_{A,\infty})$.

By re-ordering the vertices if necessary, we may assume $E^{0}_{A,\infty} = \{1,..,k\}$ for some $k\leq N$. Then, letting $A_{\infty}$ be the adjacency matrix of $E^{1}_{A,\infty}$ we see that
\[
(A_{\infty})_{i,j} = \begin{cases}
A_{i,j} \text{ if }  B_{i,j}\neq 0,\\
0\text{ otherwise.}
\end{cases}
\]
Since $(G_{B})_{i}$ is infinite for $i\leq k$, we must have (by Corollary \ref{invariance}) $r^{-1}(i)\cap E_{A,\infty}^{1}\neq\emptyset$. Hence $A_{\infty}$ has no zero rows.
Letting $B_{\infty} \coloneqq (B_{i,j})_{i,j\leq k}$, we have $\phi_{A_{\infty},B_{\infty}} = \phi_{A,B,\infty}$. Note that in general $G_{B_{\infty}}$ is a quotient of $G_{B,\infty} = \mathbb{Z}\times E^{0}_{A,\infty}$.

Similarly, let 
\[
E^{1}_{A,<\infty} \coloneqq \{e\in E^{1}_{A}\mid  r(e)\in E^{0}_{A,<\infty}\text{ and }B_{e}\neq 0\},
\]
so that $E_{A,<\infty} \coloneqq (E^{1}_{A,<\infty}, E^{0}_{A,<\infty}, r, s)$ is, by Corollary \ref{invariance}, a sub-graph of $E_{A}$ and set $G_{B,<\infty} \coloneqq \{g\in G_{B}: r(g)\in E^{0}_{A,<\infty}\}$. Then, $\phi_{A,B}$ restricts to an action $\phi_{A,B,<\infty}:G_{B,<\infty}\to \PI(E^{*}_{A,<\infty})$ and letting $A_{<\infty}$ be the adjacency matrix of $E_{A,<\infty}$ and $B_{<\infty} = (B_{i,j})_{i,j>k}$, we have $\phi_{A_{<\infty}, B_{<\infty}} = \phi_{A,B,<\infty}$ and $G_{B_{<\infty}} = G_{B,<\infty}$. Note however that $E_{A_{<\infty}}$ may have sources even if $E_{A}$ has none.

\begin{dfn}
\label{dfn:infinite_finite_decomposition}
 Let $(A,B)$ be a Katsura pair. We call $(A_{\infty}, B_{\infty})$ the \emph{infinite part} of $(A,B)$ and $(A_{<\infty}, B_{<\infty})$ the \emph{finite part} of $(A,B)$, as defined immediately above.
\end{dfn}

For the following proposition, recall the notion of \emph{contracting} from Section \ref{sec: definitions}.

\begin{prp}
\label{Contracting_Katsura}
Let $(A,B)$ be a Katsura pair. Then, the KEP-action $(G_{B}, E_{A})$ is contracting if and only if $(G_{B,\infty}, E_{A_{\infty}})$ is contracting.
\end{prp}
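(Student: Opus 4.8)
The plan is to isolate two elementary consequences of the restriction formula \eqref{eq:Katsura_action-restriction} and then play the finite part of $(A,B)$ off against Corollary~\ref{invariance}. First I would record that restriction ``kills'' edges with $B_e=0$: if $B_{ij}=0$ then in \eqref{ExelPardoKatsura_action} we get $\hat{k}=0$, so $a_i^k|_{e_{i,j,m}}=a_j^0=s(e)$ is a unit. Since units restrict to units (Lemma~\ref{properties SSA}(3)), it follows that $a_i^k|_\mu\in E_A^0$ whenever $B_\mu=0$. Second, I would observe that $G_{B,<\infty}=\bigsqcup_{i\in E^0_{A,<\infty}}(G_B)_i$ is a \emph{finite} set, being a finite union of finite isotropy groups, and that any restriction whose base vertex lies in $E^0_{A,<\infty}$ automatically lands in $G_{B,<\infty}$.

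The bridge between the two sides is a decomposition of paths carrying nontrivial $B$-data. Given $\mu\in E_A^*$ with $B_\mu\neq 0$, I would write $\mu=\alpha\beta$, where $\alpha$ is the maximal prefix all of whose edges lie in $E^1_{A,\infty}$. By Corollary~\ref{invariance} (which gives $r(E^1_{A,\infty})\subseteq E^0_{A,\infty}$, and, once an edge with $B_e\neq 0$ enters $E^0_{A,<\infty}$, keeps every later vertex there), the vertices visited along $\alpha$ stay in $E^0_{A,\infty}$, while if $\beta$ is nonempty then $s(\mu)\in E^0_{A,<\infty}$. Thus either $\mu=\alpha\in E_{A,\infty}^*$, or $a_i^k|_\mu\in G_{B,<\infty}$. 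Crucially, for $g\in G_{B,\infty}$ and $\alpha\in E_{A,\infty}^*$ the restriction computed in $(G_B,E_A)$ coincides with the one computed in $(G_{B,\infty},E_{A,\infty})$, since both follow the same matrix recursion and $G_{B,\infty}$ is literally the sub-group-bundle of $G_B$ over $E^0_{A,\infty}$.

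For the forward implication I would take a contracting core $F$ for $(G_B,E_A)$ and set $F_\infty\coloneqq F\cap G_{B,\infty}$. Given $g\in G_{B,\infty}$, choose $n$ with $g|_\mu\in F$ for all $\mu\in E_A^k$, $k\geq n$; then for $\alpha\in E_{A,\infty}^k$ with $k\geq n$ we have $g|_\alpha\in F$ and, since $s(\alpha)\in E^0_{A,\infty}$, also $g|_\alpha\in G_{B,\infty}$, whence $g|_\alpha\in F_\infty$. For the (harder) converse I would take a contracting core $F_\infty$ for $(G_{B,\infty},E_{A,\infty})$ and propose the finite set $F\coloneqq E_A^0\cup F_\infty\cup G_{B,<\infty}$. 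For $g\in G_{B,<\infty}$ every restriction already lies in $E_A^0\cup G_{B,<\infty}$ by the two observations above together with invariance, so $n=0$ works. For $g\in G_{B,\infty}$ I would take the $n_g$ supplied by contractingness of the infinite part; then for any $\mu$ of length $\geq n_g$ the three cases $B_\mu=0$, $\mu=\alpha\in E_{A,\infty}^*$, and $\beta\neq\emptyset$ place $g|_\mu$ in $E_A^0$, $F_\infty$, and $G_{B,<\infty}$ respectively.

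The main obstacle, and the only place real structure is used, is the backward direction: one must guarantee a single finite $F$ works uniformly across all long paths out of each fixed $g$. This is exactly what the decomposition $\mu=\alpha\beta$ buys us; it confines the genuinely unbounded restriction behaviour to the subgraph $E_{A,\infty}$ (absorbed by the hypothesised core $F_\infty$) while forcing every other long restriction into the \emph{finite} reservoir $E_A^0\cup G_{B,<\infty}$, whose finiteness rests precisely on the defining property that each $(G_B)_i$ with $i\in E^0_{A,<\infty}$ is a finite group.
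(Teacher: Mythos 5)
Your proof is correct and follows essentially the same route as the paper's: both arguments rest on the identical case split (restrictions along $\mu$ with $B_{\mu}=0$ become units, paths with $B_{\mu}\neq 0$ entering $E^{0}_{A,<\infty}$ stay there by Corollary~\ref{invariance} and land restrictions in the finite set $G_{B,<\infty}$, and the remaining case confines everything to $(G_{B,\infty},E_{A,\infty})$). The only cosmetic difference is that you construct the core $F = E^{0}_{A}\cup F_{\infty}\cup G_{B,<\infty}$ directly, whereas the paper runs the converse as a contrapositive over an arbitrary finite $F\supseteq G_{B,<\infty}\cup E^{0}_{A}$.
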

\begin{proof}
The ``only if'' direction is immediate, so we prove the ``if'' direction by proving its contrapositive. If $(G_{B}, E_{A})$ is not contracting, then for every finite set $F$ such that $G_{B,<\infty}\cup E^{0}\subseteq F\subseteq G_{B}$, there is $g\in G_{B}$ such that $g|_{\mu_{n}}\notin F$ for infinitely many paths $(\mu_{n})_{n\in\mathbb{N}}$. For $\mu\in E^{*}_{A}$ such that $B_{\mu} = 0$, we have $g|_{\mu} = s(\mu)\in F$ and hence $B_{\mu_{n}}\neq 0$. If $\mu\in E^{n}_{A}$ satisfies $B_{\mu}\neq 0$ and $r(\mu_{i})\in E^{0}_{A, <\infty}$ for some $i\leq n$, then by Corollary \ref{invariance}, we have $s(\mu)\in E^{0}_{A, <\infty}$, and therefore $h|_{\mu}\in G_{B, <\infty}\subseteq F$ for any $h\in r(\mu)G_{B}$. Since $g|_{\mu_{n}}\notin F$ and $B_{\mu_{n}}\neq 0$, it follows that $\mu_{n}\in E_{A,\infty}^{*}$ for all $n\in\mathbb{N}$ and $g\in G_{B,\infty}$. Therefore, $(G_{B_{\infty}}, E_{A_{\infty}})$ is not contracting.
\end{proof}
Now, we determine a necessary and sufficient condition for the infinite part of a KEP-action to be contracting. Suppose $G$ is a finitely generated groupoid with generating set $S = S^{-1}$, with associated length function $\ell_S:G \to \mathbb{N}$, and suppose $(G,E)$ is an action-restriction pair. Following Nekrashevych \cite[Definition 2.11.9]{Nekrashevych:Self-similar}, the \emph{contraction coefficient} of $(G,E)$ is the quantity:
\begin{equation}\label{contraction_coeff}
\rho = \limsup_{n \to \infty}\left( \limsup_{g \in G, \ell_S(g) \to \infty} \max_{\mu \in d(g)E^n} \frac{\ell_S(g|_{\mu})}{\ell_S(g)}  \right)^{1/n}.
\end{equation}

Nekrashevych proved the following result in the case of a self-similar group action. However, his proof goes through line-for-line with the obvious extension from words to paths in the action-restriction pair setting.

\begin{prp}[{\cite[Lemma 2.11.10 and Proposition 2.11.11]{Nekrashevych:Self-similar}}]\label{Cont_coeff_prp}
Let $G$ be a finitely generated groupoid and $E$ a finite graph with no sources. If $(G,E)$ is an action-restriction pair, then the contraction coefficient $\rho$ is finite and does not depend on the generating set. Furthermore, $(G,E)$ is contracting if and only if $\rho <1$.
\end{prp}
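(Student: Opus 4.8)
The plan is to follow Nekrashevych's argument \cite{Nekrashevych:Self-similar}, keeping careful track of the groupoid bookkeeping. Fix a finite symmetric generating set $S=S^{-1}$ and let $\ell_S$ be the associated length function (the minimal number of composable generators needed to express an element, units having length $0$). For $g\in G$ and $n\ge 0$ set $L_n(g)\coloneqq \max_{\mu\in d(g)E^n}\ell_S(g|_\mu)$ and $\rho_n\coloneqq \limsup_{\ell_S(g)\to\infty}L_n(g)/\ell_S(g)$, so that $\rho=\limsup_n \rho_n^{1/n}$. Since $E$ has no sources, $d(g)E^n\neq\emptyset$ for every $g$ and every $n$, so each $L_n(g)$ is a maximum over a nonempty set and all restrictions in play genuinely exist; this is the only place the hypothesis is used. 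For finiteness, iterate the product rule $(hg)|_\mu=(h|_{g\cdot\mu})(g|_\mu)$ of Lemma \ref{properties SSA}(4) over a geodesic expression $g=s_1\cdots s_{\ell_S(g)}$ and over the edges of $\mu$ to obtain $\ell_S(g|_\mu)\le C^{|\mu|}\ell_S(g)$, where $C\coloneqq \max_{s\in S}\max_{e\in d(s)E^1}\ell_S(s|_e)$ is finite because $S$ and $E^1$ are finite. Hence $\rho_n\le C^n$ and $\rho\le C<\infty$.

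Next I would show the limit exists and is independent of $S$. Using $g|_{\mu\nu}=(g|_\mu)|_\nu$ (Lemma \ref{properties SSA}(2)) one rewrites $L_{m+n}(g)=\max_{\mu\in d(g)E^m}L_n(g|_\mu)$; splitting the inner maximum according to whether $\ell_S(g|_\mu)$ is large or bounded, and using the Lipschitz bound above to control the bounded part, yields the submultiplicativity estimate $\rho_{m+n}\le \rho_m\rho_n$. Fekete's lemma applied to $\log\rho_n$ then gives $\rho=\lim_n\rho_n^{1/n}=\inf_n\rho_n^{1/n}$, so the outer $\limsup$ is in fact a limit. For independence of the generating set, the standard bi-Lipschitz comparison $\tfrac1b\,\ell_S\le \ell_{S'}\le a\,\ell_S$ with $a=\max_{s\in S}\ell_{S'}(s)$ and $b=\max_{s'\in S'}\ell_S(s')$ gives $\rho'_n\le ab\,\rho_n$; taking $n$-th roots and letting $n\to\infty$ kills the constant, so $\rho'\le\rho$, and symmetry gives equality.

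For the equivalence, consider first the direction $\rho<1\Rightarrow$ contracting. If $\rho<1$ then $\rho_n<1$ for some $n$; fixing $\lambda\in(\rho_n,1)$ there is $K$ with $\ell_S(g|_\nu)\le \lambda\ell_S(g)$ whenever $\ell_S(g)>K$ and $\nu\in E^n$. The ball $B_R=\{h:\ell_S(h)\le R\}$ with $R=C^nK$ is then invariant under restriction along length-$n$ paths, and enlarging it by the finitely many restrictions along paths of length $<n$ produces a finite set $\tilde F$ invariant under all restrictions; a geometric-decay estimate shows every $g$ has its long restrictions absorbed into $B_R\subseteq\tilde F$, so $\tilde F$ is a contracting core. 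Conversely, suppose $(G,E)$ is contracting with finite core $F$, and enlarge $S$ so that $F\subseteq S$ (legitimate by the independence just proved), whence every element of $F$ has length $\le 1$. For each composable pair $(s,s')$ in $S$, contraction applied to the single element $ss'$ provides $n_{s,s'}$ with $(ss')|_\mu\in F$ for $|\mu|\ge n_{s,s'}$; as the set of composable products $S\cdot S$ is finite, $n_2\coloneqq \max_{(s,s')}n_{s,s'}$ is finite. Grouping a geodesic word $g=s_1\cdots s_\ell$ into consecutive pairs and restricting along any $\mu\in E^{n_2}$ sends each pair into $F$, so $\ell_S(g|_\mu)\le\lceil\ell/2\rceil$ and therefore $\rho_{n_2}\le \tfrac12<1$, giving $\rho<1$.

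The main obstacle is the groupoid bookkeeping rather than any single estimate: iterating Lemma \ref{properties SSA}(4) requires checking at each stage that the factors $s_i|_{\mu_i}$ remain composable and that the auxiliary paths $\mu_i=(s_{i+1}\cdots s_\ell)\cdot\mu$ carry the correct ranges, which is precisely the content of axioms (A0)--(A3). The one genuinely delicate analytic point is the interchange of the two $\limsup$'s in the submultiplicativity step, where the contribution of restrictions of bounded length must be shown to wash out as $\ell_S(g)\to\infty$. Finally, in the forward direction it is the pairing trick that upgrades the easy bound $\rho\le 1$ to the strict inequality $\rho<1$, by exploiting the finiteness of $S\cdot S$ to extract a decay rate that is uniform in $g$; without this uniformity one only controls each $g$ separately, since the path length needed to push $g$ into the core grows with $\ell_S(g)$.
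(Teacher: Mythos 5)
Your proposal is correct and takes exactly the route the paper does: the paper gives no independent argument, citing Nekrashevych's Lemma 2.11.10 and Proposition 2.11.11 with the remark that the proof ``goes through line-for-line'' in the action-restriction setting, and your writeup is precisely that line-for-line adaptation (the Lipschitz bound $\ell_S(g|_{\mu})\leq C^{|\mu|}\ell_S(g)$ for finiteness, the bi-Lipschitz comparison for independence of $S$, the invariant-ball construction for $\rho<1\Rightarrow$ contracting, and the pairing trick for the converse). The only microscopic touch-ups needed are to symmetrize the enlarged generating set as $S\cup F\cup F^{-1}$ and to let $n_2$ also dominate the constants $n_s$ for single generators, so that the leftover letter of an odd-length geodesic likewise restricts into $F$ --- both trivial.
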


Now suppose that $(\mathbb{Z}\times E^{0}_{A},E_A)$ is the action restriction pair associated to a Katsura pair $(A,B)$. Then \eqref{contraction_coeff} becomes:
\begin{equation}\label{contraction_coeff_Katsura}
\rho = \limsup_{n \to \infty}\left( \limsup_{m \to \infty} \max_{\mu \in E_A^n} \frac{\ell_S(a^m_{r(\mu)}|_{\mu})}{m}  \right)^{1/n}
\end{equation}

\begin{prp}\label{Katsura_contracting}
Let $(A,B)$ be a Katsura pair such that $A$ has no zero rows and let $(\mathbb{Z}\times E^{0}_{A}, E_{A})$ be the associated action-restriction pair. Then the contraction coefficient is given by
\[
\rho=\limsup_{n \to \infty}\left( \max_{\mu \in E_A^n} \frac{|B_{\mu}|}{A_{\mu}} \right)^{1/n}.
\]
\end{prp}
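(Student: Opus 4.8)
The plan is to start from the simplified expression \eqref{contraction_coeff_Katsura} and reduce everything to an explicit computation of the restriction $a^m_{r(\mu)}|_\mu$ for a fixed path $\mu\in E_A^n$. First I would fix the finite symmetric generating set $S=\{a_v^{\pm 1}\mid v\in E_A^0\}\cup E_A^0$ of the group bundle $\mathbb{Z}\times E_A^0$. Since each fibre is a copy of $\mathbb{Z}$ generated by $a_v$, the associated length function is simply $\ell_S(a^k_v)=|k|$, and units have length $0$. Because $A$ has no zero rows, $E_A$ has no sources, so Proposition \ref{Cont_coeff_prp} applies: $\rho$ is finite and independent of this choice of $S$, and is computed by \eqref{contraction_coeff_Katsura}.

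Next I would iterate the defining division relation \eqref{ExelPardoKatsura_action} edge by edge. Writing $\mu=e_{i_0,i_1,r_1}\cdots e_{i_{n-1},i_n,r_n}$ and using the composition rule for restrictions ($g|_{\mu\nu}=(g|_\mu)|_\nu$, Lemma \ref{properties SSA}), one obtains $a^m_{r(\mu)}|_\mu=a^{m_n}_{s(\mu)}$, where $m_0=m$ and $m_t=\lfloor (m_{t-1}B_{i_{t-1}i_t}+r_t)/A_{i_{t-1}i_t}\rfloor$. Writing each floor as $m_t=m_{t-1}\beta_t+\delta_t$ with $\beta_t=B_{i_{t-1}i_t}/A_{i_{t-1}i_t}$ and $\delta_t=r_t/A_{i_{t-1}i_t}-\epsilon_t$, where $0\le\epsilon_t<1$ forces $|\delta_t|<1$, and unwinding the linear recursion yields
\[
m_n=m\,\frac{B_\mu}{A_\mu}+\sum_{t=1}^n\delta_t\prod_{l=t+1}^n\frac{B_{i_{l-1}i_l}}{A_{i_{l-1}i_l}}.
\]
For fixed $n$ the error term is bounded by a constant independent of $m$, so $\ell_S(a^m_{r(\mu)}|_\mu)=|m_n|=m\,|B_\mu|/A_\mu+O(1)$ as $m\to\infty$. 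The degenerate case $B_\mu=0$ is consistent, since then some $m_t$ becomes $0$ and remains $0$, giving length $0$, matching $|B_\mu|/A_\mu=0$.

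Finally I would take the limits in the order prescribed by \eqref{contraction_coeff_Katsura}. For each fixed $\mu$ the estimate above gives $\lim_{m\to\infty}\ell_S(a^m_{r(\mu)}|_\mu)/m=|B_\mu|/A_\mu$. Since $E_A^n$ is finite, the maximum over $\mu\in E_A^n$ of finitely many convergent sequences converges to the maximum of the limits, so the inner $\limsup$ equals $\max_{\mu\in E_A^n}|B_\mu|/A_\mu$. Substituting into \eqref{contraction_coeff_Katsura} then produces exactly the claimed formula for $\rho$.

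The main obstacle is the bookkeeping of the accumulated rounding errors coming from the floor functions in the cocycle; the key observation that renders this routine is that for fixed path length $n$ these errors contribute only an $O(1)$ term, which is annihilated upon dividing by $m$ and letting $m\to\infty$, leaving precisely the multiplicative factor $|B_\mu|/A_\mu$. The finiteness of $E_A^n$ is what permits interchanging the maximum over paths with the limit in $m$.
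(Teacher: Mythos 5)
Your proposal is correct and takes essentially the same route as the paper: both compute the restriction exponent of $a^m_{r(\mu)}|_{\mu}$ exactly as $m\frac{B_\mu}{A_\mu}$ plus a correction term bounded independently of $m$ for fixed $n$ (the paper via a two-step induction giving $l = m\frac{B_\mu}{A_\mu} + \sum_{t=1}^n (r_t-r'_t)\frac{B_{\mu[t+1,n]}}{A_{\mu[t,n]}}$, you via unwinding the floor-function recursion $m_t=\lfloor (m_{t-1}B_{i_{t-1}i_t}+r_t)/A_{i_{t-1}i_t}\rfloor$), and then substitute into \eqref{contraction_coeff_Katsura}, where dividing by $m$ kills the bounded term. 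The only differences are cosmetic: your parametrisation of the error via the remainders $\delta_t$ rather than the image labels $r'_t$, and your explicit justification (finiteness of $E_A^n$) of the interchange of the maximum with the limit in $m$, which the paper leaves implicit.
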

\begin{proof}
We first prove by induction on $m \in \NN$ that, for fixed $1 \leq i,j \leq N$ and $0\leq r <N$, that
\begin{equation}\label{induc on m}
a_i^{m} \cdot e_{i,j,r}\nu=e_{i,j,r_m}(a_j^{l} \cdot \nu) \quad \text{ where } l=m\frac{B_{ij}}{A_{ij}} + \frac{r-r_m}{A_{ij}}, \quad \text{ for all } \nu \in jE^*.
\end{equation}
For $m=1$, using \eqref{ExelPardoKatsura_action} we have $B_{ij}+r=l_1 A_{ij} +r_1$ so that
\[
a_i \cdot e_{i,j,r}\nu=e_{i,j,r_1}(a_j^{l} \cdot \nu) \quad \text{ where } l=l_1=\frac{B_{ij}}{A_{ij}} + \frac{r-r_1}{A_{ij}},
\]
as desired. Using \eqref{induc on m} for $m-1$ we have
\begin{equation}\label{induc on m_1}
a_i^{m} \cdot e_{i,j,r}\nu=a_i \cdot e_{i,j,r_{m-1}}(a_j^{l'} \cdot \nu) \quad \text{ where } l'=(m-1)\frac{B_{ij}}{A_{ij}} + \frac{r-r_{m-1}}{A_{ij}}.
\end{equation}
From \eqref{ExelPardoKatsura_action} we have $B_{ij}+r_{m-1}=l_m A_{ij} +r_m$, so that \eqref{induc on m_1} gives
\[
a_i^{m} \cdot e_{i,j,r}\nu=e_{i,j,r_{m}}(a_j^{l_m} \cdot (a_j^{l'} \cdot \nu))=e_{i,j,r_{m}}(a_j^{l'+l_m} \cdot \nu),
\]
where
\[
 l=l'+l_m=(m-1)\frac{B_{ij}}{A_{ij}} + \frac{r-r_{m-1}}{A_{ij}}+\frac{B_{ij}}{A_{ij}} + \frac{r_{m-1}-r_m}{A_{ij}}=m\frac{B_{ij}}{A_{ij}} + \frac{r-r_{m}}{A_{ij}}.
\]
Thus, \eqref{induc on m} holds.

Suppose $\mu=e_{i_0,i_1,r_1}e_{i_1,i_2,r_2} \ldots e_{i_{n-1},i_n,r_n}$, $\nu=e_{i_0,i_1,r'_1}e_{i_1,i_2,r'_2} \ldots e_{i_{n-1},i_n,r'_n}$, and~$a_{i_0}^m \cdot \mu=\nu$. We prove by induction on $|\mu| = n$ that
\begin{equation}\label{induc on n}
a_{r(\mu)}^{m}|_{\mu}=a_{s(\mu)}^{l}\quad \text{ where } l=m\frac{B_{\mu}}{A_{\mu}} + \sum_{t=1}^n (r_t-r'_t) \frac{B_{\mu[t+1,n]}}{A_{\mu[t,n]}}.
\end{equation}
For $|\mu|=1$, \eqref{induc on n} holds by \eqref{induc on m}. Using \eqref{induc on n} for $n-1$ and \eqref{induc on m}, we compute for $|\mu|=n$:
\begin{align*}
l&=\left( m\frac{B_{\mu[1,n-1]}}{A_{\mu[1,n-1]}} + \sum_{t=1}^{n-1} (r_t-r'_t) \frac{B_{\mu[t+1,n-1]}}{A_{\mu[t,n]}} \right)\frac{B_{i_{n-1}i_{n}}}{A_{i_{n-1}i_{n}}} + \frac{r_n-r'_n}{A_{i_{n-1}i_n}} \\
&=m\frac{B_{\mu}}{A_{\mu}} + \sum_{t=1}^{n} (r_t-r'_t) \frac{B_{\mu[t+1,n]}}{A_{\mu[t,n]}},
\end{align*}
so \eqref{induc on n} holds by induction.

We now use \eqref{induc on n} to compute the contraction coefficient
\begin{align*}
\rho &= \limsup_{n \to \infty}\left( \limsup_{m \to \infty} \max_{\mu \in E_A^n} \frac{\ell_S(a^m_{r(\mu)}|_{\mu})}{m}  \right)^{1/n}\\
&= \limsup_{n \to \infty}\left( \limsup_{m \to \infty} \max_{\mu \in E_A^n} \left|\frac{B_{\mu}}{A_{\mu}} + \frac{1}{m}\left( \sum_{t=1}^n (r_t-r'_t) \frac{B_{\mu[t+1,n]}}{A_{\mu[t,n]}} \right)\right|\right)^{1/n} \\
&= \limsup_{n \to \infty}\left( \max_{\mu \in E_A^n} \frac{|B_{\mu}|}{A_{\mu}} \right)^{1/n}. \qedhere
\end{align*}
\end{proof}

\begin{cor}
\label{contracting_characterization}
Let $(A,B)$ be a Katsura pair. Then, the associated KEP-action $(G_{B}, E_{A})$ is contracting if and only if $\limsup_{n \to \infty}\left( \max_{\mu \in E_{A,\infty}^{n}} \frac{|B_{\mu}|}{A_{\mu}} \right)^{1/n} < 1$.
\end{cor}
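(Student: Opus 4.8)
The plan is to reduce the claim to the three propositions already in hand and chain them together. Writing $\rho$ for the contraction coefficient, the logical skeleton is: $(G_B,E_A)$ is contracting if and only if its infinite part is contracting (Proposition \ref{Contracting_Katsura}), the infinite part is contracting if and only if its contraction coefficient is $<1$ (Proposition \ref{Cont_coeff_prp}), and that contraction coefficient is exactly the displayed $\limsup$ over $E^n_{A,\infty}$ (Proposition \ref{Katsura_contracting}).

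The first step is to apply Proposition \ref{Contracting_Katsura} to replace $(G_B,E_A)$ by its infinite part $(G_{B,\infty}, E_{A_\infty})$, which by construction is precisely the action-restriction pair $(\mathbb{Z}\times E^0_{A,\infty}, E_{A_\infty})$ associated to the Katsura pair $(A_\infty, B_\infty)$. This reduction is the substance of the argument, and the reason it cannot be skipped is exactly the point to be careful about: Propositions \ref{Katsura_contracting} and \ref{Cont_coeff_prp} both require a graph with no sources (equivalently, in the paper's conventions, an adjacency matrix with no zero rows, since a zero row of $A$ at $i$ means $iE^1=\emptyset$), whereas the full graph $E_A$ need not have this property and, as observed before Definition \ref{dfn:infinite_finite_decomposition}, the finite part may even acquire sources when $E_A$ has none. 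Thus one genuinely cannot run the contraction-coefficient machinery on $(A,B)$ directly.

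The infinite part, on the other hand, does satisfy the hypothesis: since $(G_B)_i$ is infinite for every $i\in E^0_{A,\infty}$, Corollary \ref{invariance} yields $r^{-1}(i)\cap E^1_{A,\infty}\neq\emptyset$, so $A_\infty$ has no zero rows, i.e.\ $E_{A_\infty}$ has no sources. With this verified, I would apply Proposition \ref{Katsura_contracting} to $(A_\infty,B_\infty)$ to identify the contraction coefficient of $(\mathbb{Z}\times E^0_{A,\infty}, E_{A_\infty})$ as $\limsup_{n\to\infty}\big(\max_{\mu\in E^n_{A,\infty}}|B_\mu|/A_\mu\big)^{1/n}$, and Proposition \ref{Cont_coeff_prp} to conclude this pair is contracting precisely when that quantity is $<1$. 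Chaining the equivalences from Proposition \ref{Contracting_Katsura} then gives the corollary. The only content-bearing checkpoint is the no-sources verification for $E_{A_\infty}$ (together with correctly identifying $(G_{B,\infty}, E_{A_\infty})$ as the action-restriction pair of $(A_\infty, B_\infty)$, so that the cited propositions apply verbatim); the rest is a direct concatenation of the three results.
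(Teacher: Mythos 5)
Your proposal is correct and follows the same route the paper intends: the corollary is stated without proof precisely because it is the concatenation of Propositions \ref{Contracting_Katsura}, \ref{Katsura_contracting}, and \ref{Cont_coeff_prp}, applied to the infinite part. Your one substantive checkpoint --- that $A_\infty$ has no zero rows (via Corollary \ref{invariance}), so that $E_{A_\infty}$ has no sources and the contraction-coefficient results apply, together with the identification $G_{B,\infty} = \mathbb{Z}\times E^0_{A,\infty}$ --- is exactly the verification the paper records in the text preceding Definition \ref{dfn:infinite_finite_decomposition}.
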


For the following proposition, recall the notion of \emph{regular} from Section \ref{sec: definitions}.

\begin{prp}
\label{regulardecomposition}
Let $(A,B)$ be a Katsura pair. Then, the KEP-action $(G_{B}, E_{A})$ is regular if and only if $(G_{B,\infty}, E_{A_{\infty}})$ and $(G_{B_{<\infty}}, E_{A_{<\infty}})$ are regular.
\end{prp}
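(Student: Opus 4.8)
The plan is to rephrase regularity in terms of \emph{bad paths} and then analyse these through the restriction formula from the proof of Proposition~\ref{Katsura_contracting} together with the vertex dichotomy $E^0_A=E^0_{A,\infty}\sqcup E^0_{A,<\infty}$. For $g=a^m_i$ fixing $\mu\in iE^*_A$ the correction terms in \eqref{induc on n} vanish, so $g|_\mu=a^{mB_\mu/A_\mu}_{s(\mu)}$; in particular $B_\mu=0$ forces $g|_\mu=s(\mu)$. Hence, reading off $\text{ker}(\phi_{A,B})$ via Proposition~\ref{Katsura_Kernel}, the pair $(G_B,E_A)$ is non-regular precisely when some single $g=a^m_i$ admits fixed paths $\mu$ of unbounded length with $B_\mu\neq 0$ and $a^{mB_\mu/A_\mu}_{s(\mu)}\notin\text{ker}(\phi_{A,B})$; I call these the bad paths of $g$. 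By Corollary~\ref{invariance}, a path with $B_\mu\neq 0$ cannot re-enter $E^0_{A,\infty}$ once its vertices reach $E^0_{A,<\infty}$, so every bad path factors as $\mu=\alpha\gamma$ with $\alpha\in iE^*_{A_\infty}$ the maximal infinite-part prefix and $\gamma=e\beta$ where $e$ is a single transition edge and $\beta\in E^*_{A_{<\infty}}$ (either piece may be empty). I also record that $\text{ker}(\phi_{A,B})$ and $\text{ker}(\phi_{A_{<\infty},B_{<\infty}})$ agree at vertices of $E^0_{A,<\infty}$, since the only constraining test paths (those with $B\neq 0$) remain inside $E_{A_{<\infty}}$.

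The key lemma is that the infinite part is faithful: $(G_{B_\infty})_i=\mathbb{Z}$ for every $i\in E^0_{A,\infty}$. Since $(G_B)_i=\mathbb{Z}$ forces $\sup_{\eta\in iE^*_A}\frac{A_\eta}{\text{gcd}(A_\eta,|B_\eta|)}=\infty$, I would take $\eta$ (necessarily with $B_\eta\neq 0$) making this quantity large, factor it as $\eta=\alpha\gamma$ as above, and use \eqref{1} together with finiteness of the orders $o_j$ to bound $\frac{A_\gamma}{\text{gcd}(A_\gamma,|B_\gamma|)}$ by a constant $CM$ (with $C$ as in \eqref{1} and $M=\max_{j\in E^0_{A,<\infty}}o_j<\infty$). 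Combined with the divisibility $\text{gcd}(A_\alpha,|B_\alpha|)\,\text{gcd}(A_\gamma,|B_\gamma|)\mid \text{gcd}(A_{\alpha\gamma},|B_{\alpha\gamma}|)$ this yields $\frac{A_\alpha}{\text{gcd}(A_\alpha,|B_\alpha|)}\geq (CM)^{-1}\frac{A_\eta}{\text{gcd}(A_\eta,|B_\eta|)}$, so the supremum over infinite-part prefixes $\alpha\in iE^*_{A_\infty}$ is also infinite, forcing $(G_{B_\infty})_i$ infinite and hence equal to $\mathbb{Z}$. Consequently $\text{ker}(\phi_{A_\infty,B_\infty})$ is trivial, so a nonempty $\alpha\in iE^*_{A_\infty}$ fixed by $a^m_i$ with $m\neq 0$ satisfies $a^m_i|_\alpha=a^{mB_\alpha/A_\alpha}_{s(\alpha)}\notin\text{ker}(\phi_{A_\infty,B_\infty})$ automatically.

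For the implication full regular $\Rightarrow$ both regular I argue the contrapositive, that non-regularity of a part forces non-regularity of the whole. A finite-part bad path lies in $E_{A_{<\infty}}$ and, because the kernels agree at $E^0_{A,<\infty}$, is a bad path of $(G_B,E_A)$; an infinite-part bad path $\alpha$ lies in $E_{A_\infty}$ and satisfies $a^m_i|_\alpha\notin\text{ker}(\phi_{A,B})$ (the unit always lies in the kernel, so a non-unit restriction stays a non-unit), hence is bad for the whole. Unbounded families transfer verbatim, giving non-regularity of $(G_B,E_A)$; this is exactly the contrapositive we want.

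For the reverse implication I fix $g=a^m_i$ with bad paths of unbounded length. If $i\in E^0_{A,<\infty}$, the dichotomy puts all of them in $E_{A_{<\infty}}$, witnessing non-regularity of the finite part. If $i\in E^0_{A,\infty}$ (so $m\neq 0$), I split each bad path as $\mu=\alpha\gamma=\alpha e\beta$. The main obstacle is this mixed regime, which I resolve by a dichotomy on lengths: if the prefixes $\alpha$ are unbounded then, being fixed by $a^m_i$ and bad for the infinite part by the key lemma, they witness non-regularity of $(G_{B_\infty},E_{A_\infty})$ directly (the controlling element $a^m_i$ is fixed throughout); if instead the prefixes stay bounded, the tails $\beta\in E^*_{A_{<\infty}}$ are unbounded, each is fixed by $g|_{\alpha e}$ and bad, and since $g|_{\alpha e}$ ranges over the finitely many restrictions of $g$ along paths of bounded length—landing in the finite isotropy groups $(G_{B_{<\infty}})_{r(\beta)}$—a pigeonhole argument produces one element of $G_{B_{<\infty}}$ with bad paths of unbounded length, witnessing non-regularity of the finite part. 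Assembling the cases gives the equivalence; the two genuinely delicate points are the faithfulness lemma and this length dichotomy with its pigeonhole step.
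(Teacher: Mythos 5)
Your proof is correct, and its skeleton ultimately coincides with the paper's: both arguments rest on the decomposition $\mu=\mu_{1}e\mu_{2}$ of a path with $B_{\mu}\neq 0$ into an infinite-part prefix, a single transition edge, and a finite-part tail, supplied by Corollary \ref{invariance}, together with finiteness of $G_{B_{<\infty}}$. The paper runs the ``if'' direction directly: regularity of the infinite part applied to $g$ bounds $|\mu_{1}|\leq M''$, finiteness plus regularity of the finite part gives a uniform $M'$ with $|\mu_{2}|\leq M'$, whence $|\mu|\leq M'+M''+1$; your contrapositive with the length dichotomy and the pigeonhole over the finite set $G_{B_{<\infty}}$ is the same use of finiteness in reverse (the paper's uniform $M'$ is exactly your pigeonhole in disguise), so nothing is lost or gained there. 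The genuine difference is your faithfulness lemma, that $(G_{B_{\infty}})_{i}=\mathbb{Z}$ for every $i\in E^{0}_{A,\infty}$. The paper never needs this, because the proposition concerns the action-restriction pair $(G_{B,\infty},E_{A_{\infty}})$ with $G_{B,\infty}=\mathbb{Z}\times E^{0}_{A,\infty}$ (regularity is defined for such pairs in Definition \ref{def:regular}), and in that pair a restriction $a^{mB_{\alpha}/A_{\alpha}}_{s(\alpha)}$ with $m\neq 0$ and $B_{\alpha}\neq 0$ is automatically a non-unit, so badness of infinite-part prefixes is free. You instead read the infinite part as the faithful quotient $(G_{B_{\infty}},E_{A_{\infty}})$, and your lemma --- proved correctly via the divisibility estimates from the proof of Corollary \ref{invariance} together with the bound $A_{\beta}/\text{gcd}(A_{\beta},|B_{\beta}|)\leq o_{r(\beta)}$ on finite-part tails --- shows the two readings agree, which legitimately closes the notational gap between $G_{B,\infty}$ and $G_{B_{\infty}}$ that the statement itself glosses over. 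This buys something the paper does not state: the discussion preceding Definition \ref{dfn:infinite_finite_decomposition} only records that $G_{B_{\infty}}$ is in general a quotient of $G_{B,\infty}=\mathbb{Z}\times E^{0}_{A,\infty}$, whereas your lemma shows this quotient map is always an isomorphism, i.e., the infinite part of any KEP-action is itself the faithful KEP-action of the pair $(A_{\infty},B_{\infty})$. That is a worthwhile sharpening, obtained at the cost of an argument the proposition, as stated, could do without.
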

\begin{proof}
The ``only if'' direction is immediate, so we prove the ``if'' direction. Suppose $g\in G$. we show there is $M\in\mathbb{N}$ such that $g\cdot \mu = \mu$, $g|_{\mu}\neq s(\mu)$ implies $|\mu|\leq M$.

Since $G_{B_{<\infty}}$ is finite, there is an $M'\in\mathbb{N}$ such that for every $h\in G_{B_{<\infty}}$ and $\nu\in E^{*}_{A,<\infty}$ satisfying $h\cdot \nu = \nu$ and $g|_{\nu}\neq s(\nu)$, we have $|\nu|\leq M'$.

Note that $g|_{\mu}\neq s(\mu)$ implies $B_{\mu}\neq 0$, so by Corollary \ref{invariance}, we can write $\mu = \mu_{1}e\mu_{2}$ for some $\mu_{1}\in E^{*}_{A,\infty}$ and $\mu_{2}\in E^{*}_{A,<\infty}$ (in this decomposition we allow $\mu_{1} = \emptyset$ or $\mu_{2}=\emptyset$).

If $\mu_{1} = \emptyset$, then $|\mu|\leq M' + 1$.

If $\mu_{1}\neq\emptyset$, then $g\in G_{B,\infty}$. Let $M''$ be such that for $\nu\in E^{*}_{A,\infty}$, $g\cdot\nu = \nu$ and $g|_{\nu}\neq s(\nu)$ implies $|\nu|\leq M''$. Then, $|\mu|\leq M'' + M' + 1$.

In either case, we have $|\mu|\leq M\coloneqq M' +M'' +1.$
\end{proof}

\begin{prp}\label{Katsura_regular}
Let $(A,B)$ be a Katsura pair such that $A$ has no zero rows. If the associated action-restriction pair $(\mathbb{Z}\times E^{0}_{A},E_A)$ is contracting, then it is regular.
\end{prp}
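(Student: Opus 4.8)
The plan is to reduce regularity to a quantitative decay statement about the ratio $|B_\mu|/A_\mu$ along paths — a quantity that Proposition~\ref{Katsura_contracting} already identifies with the contraction coefficient — and then to convert that decay into an eventual \emph{vanishing} by exploiting the fact that restrictions are genuine integer powers.

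First I would note that every element of $\mathbb{Z}\times E_A^0$ has the form $g=a_i^m$ for a vertex $i$ and some $m\in\mathbb{Z}$. If $m=0$ then $g=i$ is a unit and Definition~\ref{def:regular} holds trivially with $K=0$, so I may assume $m\neq 0$. Fixing such a $g$, suppose $\mu\in iE_A^n$ satisfies $a_i^m\cdot\mu=\mu$. In the notation of \eqref{induc on n}, writing $\nu=a_i^m\cdot\mu$, the hypothesis $\nu=\mu$ forces $r'_t=r_t$ for every $t$, so the correction sum vanishes and
\[
a_i^m|_\mu=a_{s(\mu)}^{l},\qquad l=m\frac{B_\mu}{A_\mu}.
\]
Since $A$ has no zero rows we have $A_\mu\geq 1$, so $l=0$ exactly when $B_\mu=0$, in which case $a_i^m|_\mu=s(\mu)$. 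Thus proving that $g$ satisfies the regularity condition amounts to producing a $K$ so that $a_i^m\cdot\mu=\mu$ together with $|\mu|\geq K$ forces $B_\mu=0$.

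The decisive step is an integrality observation: $a_i^m|_\mu$ is an element of the group bundle, so $l\in\mathbb{Z}$ (one can also read off $mB_\mu/A_\mu\in\mathbb{Z}$ from the computation in Proposition~\ref{Katsura_Kernel}). Hence if $B_\mu\neq 0$ then $l\neq 0$, so $|l|\geq 1$, which rearranges to the lower bound $|B_\mu|/A_\mu\geq 1/|m|$. On the other hand, contracting of $(\mathbb{Z}\times E_A^0,E_A)$ gives $\rho<1$ by Proposition~\ref{Cont_coeff_prp} (applicable since $A$ having no zero rows means $E_A$ has no sources and the bundle is finitely generated), and Proposition~\ref{Katsura_contracting} evaluates $\rho=\limsup_n(\max_{\mu\in E_A^n}|B_\mu|/A_\mu)^{1/n}$. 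Choosing $\rho<\lambda<1$ yields $n_0$ with $\max_{\mu\in E_A^n}|B_\mu|/A_\mu<\lambda^n$ for all $n\geq n_0$. Taking $K\geq n_0$ large enough that $\lambda^K<1/|m|$, any fixed path $\mu$ with $|\mu|\geq K$ and $B_\mu\neq 0$ would have to satisfy both $|B_\mu|/A_\mu\geq 1/|m|$ and $|B_\mu|/A_\mu<\lambda^{|\mu|}\leq\lambda^K<1/|m|$, a contradiction; so $B_\mu=0$ and $a_i^m|_\mu=s(\mu)$, giving regularity.

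The main obstacle — really the only substantive point — is the interplay between integrality and decay just used: contracting on its own only delivers $|B_\mu|/A_\mu\to 0$, which would not obviously suffice, but because each restriction lands in $\mathbb{Z}$ the ratio cannot be a small \emph{nonzero} number once a fixed point is forced, so decaying below the threshold $1/|m|$ pins it to $0$. I would take care to invoke the hypothesis ``$A$ has no zero rows'' in both roles it plays: it supplies the ``no sources'' assumption needed for Propositions~\ref{Cont_coeff_prp} and~\ref{Katsura_contracting}, and it guarantees $A_\mu\geq 1$, which is what makes the equivalence $l=0\iff B_\mu=0$ legitimate.
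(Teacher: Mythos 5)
Your proof is correct and takes essentially the same route as the paper: both use the restriction formula $a_i^m|_\mu = a_{s(\mu)}^{l}$ with $l = m\frac{B_\mu}{A_\mu}$ (a consequence of \eqref{induc on n}) together with Propositions \ref{Cont_coeff_prp} and \ref{Katsura_contracting} to drive $|B_\mu|/A_\mu$ below the integrality threshold $1/|m|$ for all sufficiently long paths. If anything you are slightly more careful than the paper, whose one-line conclusion that $g\cdot\mu\neq\mu$ for all $|\mu|\geq M$ silently skips paths with $B_\mu=0$, which $g$ does fix; as you correctly note, in that case $g|_\mu = s(\mu)$ anyway, so regularity holds regardless.
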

\begin{proof}
Suppose $(\mathbb{Z}\times E^{0},E_A)$ is contracting. Write $g = a^{k}_{i}$. By Proposition \ref{Cont_coeff_prp} and Proposition \ref{Katsura_contracting}, there is $M\in\mathbb{N}$ such that $\frac{|B_{\mu}|}{A_{\mu}} < \frac{1}{k}$ for all $|\mu|\geq M$ and hence $k\frac{B_{\mu}}{A_{\mu}}\notin \mathbb{Z}$. It follows that $g\cdot\mu\neq \mu$ for $|\mu|\geq M$. Hence, $(\mathbb{Z}\times E^{0},E_A)$ is regular.
\end{proof}
\begin{cor}
\label{regular_and_contracting_decomposition}
    Let $(A,B)$ be a Katsura pair such that the KEP-action $(G_{B}, E_{A})$ is contracting. Then, $(G_{B}, E_{A})$ is regular if and only if $(G_{B_{<\infty}}, E_{A_{<\infty}})$ is regular.
\end{cor}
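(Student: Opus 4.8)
The plan is to reduce the statement to the decomposition of regularity established in Proposition \ref{regulardecomposition}, and then to observe that, under the contracting hypothesis, the infinite part is \emph{automatically} regular, so that the only genuine constraint on regularity comes from the finite part.

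First I would invoke Proposition \ref{regulardecomposition}, which says that $(G_B, E_A)$ is regular if and only if both $(G_{B,\infty}, E_{A_\infty})$ and $(G_{B_{<\infty}}, E_{A_{<\infty}})$ are regular. The corollary will therefore follow as soon as I show that, whenever $(G_B, E_A)$ is contracting, the infinite part $(G_{B,\infty}, E_{A_\infty})$ is regular: for then the conjunction ``$(G_{B,\infty}, E_{A_\infty})$ regular and $(G_{B_{<\infty}}, E_{A_{<\infty}})$ regular'' collapses to ``$(G_{B_{<\infty}}, E_{A_{<\infty}})$ regular,'' yielding both directions of the desired equivalence at once.

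To prove the infinite part is regular I would proceed as follows. By Proposition \ref{Contracting_Katsura}, since $(G_B, E_A)$ is contracting, so is $(G_{B,\infty}, E_{A_\infty})$. Next I identify $(G_{B,\infty}, E_{A_\infty})$ with the action-restriction pair $(\mathbb{Z}\times E^0_{A_\infty}, E_{A_\infty})$ attached to the Katsura pair $(A_\infty, B_\infty)$ — this is precisely the identification $G_{B,\infty} = \mathbb{Z}\times E^0_{A,\infty}$ together with $\phi_{A_\infty, B_\infty} = \phi_{A,B,\infty}$ recorded during the construction of the infinite part, using that the restriction operations on $E^*_{A_\infty}$ agree with those of $(A_\infty, B_\infty)$ because $(A_\infty)_{ij} = A_{ij}$ on the edges with $B_{ij}\neq 0$. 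Crucially, it was shown in that construction that $A_\infty$ has no zero rows. Hence Proposition \ref{Katsura_regular}, applied to $(A_\infty, B_\infty)$, gives that the contracting pair $(\mathbb{Z}\times E^0_{A_\infty}, E_{A_\infty})$ is regular. If one wishes to read regularity on the faithful quotient $(G_{B_\infty}, E_{A_\infty})$ instead, it transfers for free: if $g\cdot\mu = \mu$ with $|\mu|\geq K$ forces $g|_\mu$ to be a unit of $\mathbb{Z}\times E^0_{A_\infty}$, then a fortiori its image is a unit of $G_{B_\infty}$.

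The step demanding the most care is not any computation but the bookkeeping identification in the previous paragraph: one must check that ``contracting'' and ``regular,'' each defined for action-restriction pairs, are being measured on the same object — namely the pair $(\mathbb{Z}\times E^0_{A_\infty}, E_{A_\infty})$ associated to $(A_\infty, B_\infty)$ — and that the no-zero-rows hypothesis of Proposition \ref{Katsura_regular} is genuinely met by $A_\infty$. Once that matching is in place, the corollary is immediate by combining Propositions \ref{regulardecomposition}, \ref{Contracting_Katsura}, and \ref{Katsura_regular}.
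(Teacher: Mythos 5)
Your proposal is correct and takes essentially the same route as the paper, whose entire proof of Corollary \ref{regular_and_contracting_decomposition} is to cite Propositions \ref{regulardecomposition} and \ref{Katsura_regular}. You have simply made explicit the steps the paper leaves implicit, namely that contracting passes to the infinite part (Proposition \ref{Contracting_Katsura}), that $A_{\infty}$ has no zero rows so Proposition \ref{Katsura_regular} applies to the pair $(A_{\infty}, B_{\infty})$, and that regularity of the action-restriction pair descends to the faithful quotient.
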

\begin{proof}
Follows immediately from Proposition \ref{regulardecomposition} and Proposition \ref{Katsura_regular}.
\end{proof}

We now determine a necessary and sufficient condition for when the finite part of a KEP-action is regular.

\begin{prp}
\label{finite_regular}
Let $(A,B)$ be a Katsura pair such that $\text{sup}_{\mu\in E^{*}:B_{\mu}\neq 0}\frac{A_{\mu}}{\text{gcd}(A_{\mu}, |B_{\mu}|)} <\infty$. 
Then, the KEP-action $(G_{B}, E_{A})$ is regular if and only if there is $K\in\mathbb{N}$ such that for all $\mu\in E^{K}_{A}$ and $\omega\in s(\mu)E^{*}_{A}$, $A_{\omega}$ divides $\frac{B_{\mu}}{\text{gcd}(A_{\mu}, |B_{\mu}|)}B_{\omega}$.
\end{prp}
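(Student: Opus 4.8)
The plan is to translate both the regularity condition and the stated divisibility condition into statements about the finite cyclic isotropy groups $(G_B)_w$, and then compare them prime by prime. Throughout write $A'_\mu = A_\mu/\gcd(A_\mu,|B_\mu|)$ and $B'_\mu = B_\mu/\gcd(A_\mu,|B_\mu|)$, so that $\gcd(A'_\mu,|B'_\mu|)=1$ whenever $B_\mu\ne 0$. The hypothesis $\sup_{\mu\colon B_\mu\ne 0}A'_\mu<\infty$ guarantees (as in the computation of the orders $o_i$ following Proposition \ref{Katsura_Kernel}) that every isotropy group is finite, with order $o_w = \text{lcm}\{A'_\omega : \omega\in wE^*,\ B_\omega\ne 0\}$, and that all the integers $A'_\mu$ and $o_w$ are bounded by a single constant. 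Two facts drive the argument. First, specialising \eqref{induc on n} to $\nu=\mu$ shows that if $a^m_{r(\mu)}\cdot\mu=\mu$ then $a^m_{r(\mu)}|_\mu = a^{mB_\mu/A_\mu}_{s(\mu)}$. Second, by Proposition \ref{Katsura_Kernel}, $a^l_w\in\ker(\phi_{A,B})$ iff $o_w\mid l$. Combining these, $g|_\mu=s(\mu)$ is equivalent to $o_{s(\mu)}\mid mB_\mu/A_\mu$; and the asserted condition ``$A_\omega\mid B'_\mu B_\omega$ for all $\omega\in s(\mu)E^*$'' simplifies (divide by $\gcd(A_\omega,|B_\omega|)$ and use coprimality) to $A'_\omega\mid B'_\mu$ for all $\omega$, i.e. to $o_{s(\mu)}\mid B'_\mu$.

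I would prove the ``if'' direction first, with the same $K$. Suppose $o_{s(\mu)}\mid B'_\mu$ for every $\mu\in E^K$, let $g=a^m_i$, and take any $\mu$ with $r(\mu)=i$, $|\mu|\ge K$ and $g\cdot\mu=\mu$. Factor $\mu=\mu_1\mu_2$ with $|\mu_1|=K$. Since $g$ fixes the prefix $\mu_1$ we get $A'_{\mu_1}\mid m$, so $g|_{\mu_1}=a^{(m/A'_{\mu_1})B'_{\mu_1}}_{s(\mu_1)}$ is a power of $a^{B'_{\mu_1}}_{s(\mu_1)}$, which lies in $\ker(\phi_{A,B})$ by the hypothesis at $\mu_1$. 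Hence $g|_{\mu_1}=s(\mu_1)$, and then $g|_\mu=(g|_{\mu_1})|_{\mu_2}=s(\mu)$ by Lemma \ref{properties SSA}. The point is that fixing a long path already forces the restriction to its length-$K$ prefix to be trivial.

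For the converse the obstruction appears. The set of $m$ fixing $\mu$ is $c_\mu\ZZ$ with $c_\mu=\text{lcm}_{t\le|\mu|}A'_{\mu[1,t]}$ (implicit in the proof of Proposition \ref{Katsura_Kernel}), so regularity is equivalent to: there is $K$ with $o_{s(\mu)}\mid (c_\mu/A'_\mu)B'_\mu$ for all $|\mu|\ge K$. Since $A'_\mu\mid c_\mu$, this is a priori weaker than the target $o_{s(\mu)}\mid B'_\mu$, the two differing by the prefix inflation factor $c_\mu/A'_\mu$. To handle it I would fix a prime $p$ and introduce edge weights $w_e=v_p(A_e)-v_p(B_e)$, writing $S_t$ for the partial sums of $w$ along $\mu$ and $W(\eta)=\sum_{e\in\eta}w_e$. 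Because $\gcd(A'_\nu,|B'_\nu|)=1$ forces $\min(v_p(A'_\nu),v_p(B'_\nu))=0$, a one-step recursion gives $v_p(A'_{\mu[1,t]})=S_t^+$ and $v_p(B'_{\mu[1,t]})=S_t^-$; consequently $v_p(c_\mu)=\max_{t}S_t$ and $v_p(o_w)=\big(\max_{\eta\in wE^*}W(\eta)\big)^+$. In these terms regularity reads $M_{s(\mu)}\le\max_{t}S_t-S_{|\mu|}$ for all long $\mu$ (where $M_v:=\max_{\eta\in vE^*}W(\eta)$ for the fixed prime), while the target condition reads $M_{s(\mu)}\le (-S_{|\mu|})^+$.

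The main obstacle is precisely the gap $\max_t S_t-\max(0,S_{|\mu|})$ between these two right-hand sides: pointwise the regularity inequality can hold while the target fails, so the equivalence must be proved globally. I would argue the contrapositive. Assuming the target fails at arbitrarily long $\mu$, one has $M_{s(\mu)}>(-S_{|\mu|})^+$, so there is a nonconstant rightward extension realising positive potential; using that every cycle has weight $\le 0$ (else $M_v=\infty$, contradicting the boundedness coming from the hypothesis) together with a pigeonhole argument on the finite vertex set, I would manufacture an arbitrarily long path $\eta$ whose weight walk attains its maximum at its source end (equivalently, all of whose suffix weights are $\ge 0$) and which terminates at a vertex of strictly positive potential. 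Such an $\eta$ has $\max_t S^\eta_t - S^\eta_{|\eta|}=0<M_{s(\eta)}$, contradicting regularity. Carrying out this finite-graph construction, i.e. producing the long ``non-overshooting'' witness path landing at a positive-potential vertex, is the crux of the proof; the remaining work is the bookkeeping above, performed simultaneously over the finitely many primes dividing the (bounded) orders $o_w$.
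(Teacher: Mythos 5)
Your translations are all correct, and your \emph{if} direction is sound and is essentially the paper's own argument: from $g\cdot\mu=\mu$ extract $A'_{\mu_1}\mid m$ for the length-$K$ prefix $\mu_1$, identify $g|_{\mu_1}$ as a power of $a^{B'_{\mu_1}}_{s(\mu_1)}$, and kill it with Proposition \ref{Katsura_Kernel}. The gap is in the converse, and it is fatal: the crux step you defer --- from failure of the target at arbitrarily long paths, manufacture an arbitrarily long ``non-overshooting'' path landing at a vertex of strictly positive potential --- is not merely unproven but impossible, because the implication you are trying to establish is false. Take $N=5$ and the Katsura pair with nonzero entries $A_{12}=2$, $A_{22}=1$, $A_{23}=1$, $A_{34}=2$, $A_{44}=1$, $A_{53}=1$ and $B_{12}=1$, $B_{22}=1$, $B_{23}=2$, $B_{34}=1$, $B_{44}=1$, $B_{53}=1$. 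Then $\sup_{\mu}A'_\mu=2$, the isotropy orders are $o_1=o_3=o_5=2$, $o_2=o_4=1$, and $(G_B,E_A)$ is regular for trivial reasons: $a_1$ and $a_3$ fix no edge at all (by \eqref{ExelPardoKatsura_action}, since the relevant $B$-entries are odd and the $A$-entries are $2$), and $a_5$ fixes only the single edge $e_{5,3,0}$, restricting there to $a_3$. Yet the stated divisibility condition fails at \emph{every} $K$: for $K=1$ take $\mu=e_{5,3,0}$ and $\omega=e_{3,4,0}$, where $A_\omega=2\nmid B'_\mu B_\omega=1$; for $K\geq 2$ take $\mu=e_{1,2,0}\,e_{2,2,0}^{K-2}\,e_{2,3,0}$, where $A_\mu=B_\mu=2$, hence $B'_\mu=1$ and again $2\nmid 1$. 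In your walk language (for $p=2$): the $+1$ edge out of vertex $1$ inflates $c_\mu$ to $2$ while the $-1$ edge into vertex $3$ cancels it out of $A'_\mu$; every long path into a positive-potential vertex must end with that $-1$ edge, so its walk strictly drops at the last step and your sought witness with maximum at the source end cannot exist --- regularity holds with $\max_t S_t-S_{|\mu|}=1=M_{s(\mu)}$ exactly, while the target $M_{s(\mu)}\leq(-S_{|\mu|})^+$ fails.

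The instructive point is that your bookkeeping exposes a flaw in the paper's own proof of the \emph{only if} direction: the paper asserts $a^{k}_{r(\mu)}\cdot\mu=\mu$ for $k=A'_\mu$, which presupposes $A'_{\mu[1,t]}\mid A'_\mu$ for every prefix, i.e.\ exactly $c_\mu=A'_\mu$ --- the ``prefix inflation factor'' you isolated. In the example above $a_1$ already moves $\mu=e_{1,2,0}e_{2,3,0}$ even though $A'_\mu=1$, so the paper's step is false in general, and my counterexample shows the statement itself fails for general integer $B$. When $B\in M_N(\{0,1\})$, all your edge weights $v_p(A_e)-v_p(B_e)$ are nonnegative, the walk is monotone, $c_\mu=A'_\mu$, and both the proposition and the paper's two-line argument are correct; this is the only regime in which the paper subsequently uses the result (Corollary \ref{regular_characterization_01} and everything downstream). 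So the honest conclusion is the one your reformulation already reached: under the finiteness hypothesis, regularity is equivalent to the existence of $K$ with $o_{s(\mu)}\mid (c_\mu/A'_\mu)B'_\mu$ for all $|\mu|\geq K$, and this genuinely differs from the stated condition; you should stop there (or add the hypothesis $B\in M_N(\{0,1\})$, where your \emph{if}-direction argument closes the equivalence), rather than attempt the contrapositive construction, which no argument can complete.
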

\begin{proof}
    We first prove the ``only if'' direction. Since $\text{sup}_{\mu\in E^{*}:B_{\mu}\neq 0}\frac{A_{\mu}}{\text{gcd}(A_{\mu}, |B_{\mu}|)} <\infty$, $G_{B}$ is finite, so by regularity there is $K\in\mathbb{N}$ such that for every $g\in G_{B}$ and $\mu\in E^{K}_{A}$, if $g\cdot \mu = \mu$, then $g|_{\mu} = s(\mu)$. In particular, for $\mu\in E^{K}_{A}$ and $k =  \frac{A_{\mu}}{\text{gcd}(A_{\mu}, |B_{\mu}|)}$ we have $a^{k}_{r(\mu)}\cdot \mu = \mu$ and therefore $a^{k}_{r(\mu)}|_{\mu} = s(\mu)$, which implies $\frac{B_{\mu}}{\text{gcd}(A_{\mu}, |B_{\mu}|)}\cdot \frac{B_{\omega}}{A_{\omega}} = \frac{A_{\mu}}{\text{gcd}(A_{\mu}, |B_{\mu}|)}\cdot \frac{B_{\mu\omega}}{A_{\mu\omega}}\in\mathbb{Z}$ for all $\omega\in s(\mu)E_{A}^{*}$. Therefore, $A_{\omega}$ divides $\frac{B_{\mu}}{\text{gcd}(A_{\mu}, |B_{\mu}|)}B_{\omega}$ for all $\mu\in s(\mu)E_{A}^{*}$.

     We prove the ``if'' direction now. Suppose $g = a_{i}^{k}$ satisfies $a_{i}^{k}\cdot \mu = \mu$ for some $\mu\in iE_{A}^{K}$. This implies $\frac{kB_{\mu}}{A_{\mu}}\in\mathbb{Z}$, which is equivalent to $\frac{A_{\mu}}{\text{gcd}(A_{\mu}, |B_{\mu}|)}$ divides $k$. Let $m\in\mathbb{Z}$ be such that $k = m\cdot \frac{A_{\mu}}{\text{gcd}(A_{\mu}, |B_{\mu}|)}$. By the hypothesis, if $\omega\in s(\mu)E^{*}_{A}$, then $\frac{A_{\mu}}{\text{gcd}(A_{\mu}, |B_{\mu}|)}\cdot \frac{B_{\mu\omega}}{A_{\mu\omega}} = \frac{B_{\mu}}{\text{gcd}(A_{\mu}, |B_{\mu}|)}\cdot \frac{B_{\omega}}{A_{\omega}}\in\mathbb{Z}$. Hence, $k\cdot\frac{B_{\mu\omega}}{A_{\mu\omega}} = m\cdot \frac{A_{\mu}}{\text{gcd}(A_{\mu}, |B_{\mu}|)}\cdot \frac{B_{\mu\omega}}{A_{\mu\omega}}\in\mathbb{Z}$. Then, letting $l = \frac{k B_{\mu}}{A_{\mu}}$, by Proposition \ref{Katsura_Kernel}, we have $a_{i}^{k}|_{\mu} = a^{l}_{s(\mu)}= s(\mu)$.
\end{proof}

We summarise the results of this section into a theorem.

\begin{thm}
Let $(A,B)$ be a Katsura pair. Then, the KEP-action $(G_{B}, E_{A})$ is contracting and regular if and only if $\limsup_{n \to \infty}\left( \max_{\mu \in E_{A,\infty}^{n}} \frac{|B_{\mu}|}{A_{\mu}} \right)^{1/n} < 1$ and there is $K\in\mathbb{N}$ such that $A_{\omega}$ divides $\frac{B_{\mu}}{\text{gcd}(A_{\mu}, |B_{\mu}|)}B_{\omega}$
for all $\mu\in E^{K}_{A, <\infty}$ and $\omega\in s(\mu)E^{*}_{A,<\infty}$
\end{thm}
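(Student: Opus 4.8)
The plan is to assemble the statement directly from the three characterizations established earlier in the section, so that the proof is essentially a bookkeeping exercise; the only genuine work is verifying that the ``finite part'' $(A_{<\infty},B_{<\infty})$ satisfies the hypothesis needed to invoke Proposition~\ref{finite_regular}. I would begin by disposing of the contracting condition: Corollary~\ref{contracting_characterization} states that $(G_B,E_A)$ is contracting if and only if $\limsup_{n\to\infty}(\max_{\mu\in E_{A,\infty}^n}|B_\mu|/A_\mu)^{1/n}<1$, which is verbatim the first displayed condition. Hence for the rest of the argument I may assume $(G_B,E_A)$ is contracting and only need to show that, under this assumption, regularity is equivalent to the stated divisibility condition.

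With contractibility in hand, Corollary~\ref{regular_and_contracting_decomposition} reduces regularity of $(G_B,E_A)$ to regularity of its finite part $(G_{B_{<\infty}},E_{A_{<\infty}})$, so it remains to characterise when the finite part is regular. Here I would apply Proposition~\ref{finite_regular} to the Katsura pair $(A_{<\infty},B_{<\infty})$, but to do this legitimately I must first check the hypothesis $\sup_{\mu:B_\mu\neq 0}A_\mu/\gcd(A_\mu,|B_\mu|)<\infty$ for that pair. This follows because every path $\mu$ in $E_{A_{<\infty}}$ has range in $E^0_{A,<\infty}$, and by definition of that set the group $(G_B)_{r(\mu)}$ is finite; by the remark following Proposition~\ref{Katsura_Kernel} this is exactly the statement that $\max_{\nu\in r(\mu)E^*:\,B_\nu\neq 0}A_\nu/\gcd(A_\nu,|B_\nu|)<\infty$, which in particular bounds $A_\mu/\gcd(A_\mu,|B_\mu|)$.

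Before quoting the conclusion of Proposition~\ref{finite_regular} I would also record the compatibility of the products $A_\mu$ and $B_\mu$ computed in the subgraph with those in $E_A$: since $(A_{<\infty})_{ij}=A_{ij}$ whenever $B_{ij}\neq 0$ and $i\in E^0_{A,<\infty}$, and is $0$ otherwise, while $B_e$ depends only on the endpoints of $e$, any path of $E_{A_{<\infty}}$ yields the same $A_\mu,B_\mu$ whether read in the subgraph or in the full graph. Consequently Proposition~\ref{finite_regular}, applied to $(A_{<\infty},B_{<\infty})$, says that the finite part is regular precisely when there is $K\in\mathbb{N}$ with $A_\omega$ dividing $\frac{B_\mu}{\gcd(A_\mu,|B_\mu|)}B_\omega$ for all $\mu\in E^K_{A,<\infty}$ and $\omega\in s(\mu)E^*_{A,<\infty}$, which is the second displayed condition. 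Chaining the three equivalences then gives the theorem.

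I expect the only subtlety to lie in this final matching step: confirming that the divisibility condition produced by Proposition~\ref{finite_regular} for the sub-pair coincides with the one stated for $(A,B)$, and that restricting attention to paths of the finite subgraph loses no information, since edges $e$ with $B_e=0$ or with range outside $E^0_{A,<\infty}$ contribute only trivial restrictions (by Corollary~\ref{invariance}) and therefore do not affect regularity. Everything else is a direct citation of the results already proved.
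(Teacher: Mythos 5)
Your proposal is correct and follows exactly the route the paper intends: the theorem is stated there as a summary with no separate proof, being precisely the conjunction of Corollary~\ref{contracting_characterization}, Corollary~\ref{regular_and_contracting_decomposition}, and Proposition~\ref{finite_regular} applied to the finite part $(A_{<\infty},B_{<\infty})$. Your two added verifications --- that the finite part satisfies the hypothesis $\sup_{\mu:B_\mu\neq 0}A_\mu/\gcd(A_\mu,|B_\mu|)<\infty$ (via finiteness of $(G_B)_i$ for $i\in E^0_{A,<\infty}$ and the remark after Proposition~\ref{Katsura_Kernel}) and that $A_\mu$, $B_\mu$ agree whether computed in $E_{A,<\infty}$ or in $E_A$ --- are details the paper leaves implicit, and both are correct.
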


Later in this paper, we will restrict to considering Katsura pairs with $B$ taking values either $0$ or $1$. The above 
theorem has a nice reformulation in this setting.

\begin{cor}
\label{regular_characterization_01}
Let $(A, B)$ be a Katsura pair such that $B\in M_{N}(\{0,1\})$. If the KEP-action $(G_{B}, E_{A})$ is regular, then $(G_{B}, E_{A})$ is contracting. Moreover, $(G_{B}, E_{A})$ is regular if and only if there is $K\in\mathbb{N}$ such that 
\begin{itemize}
    \item $\mu\in E^{*}_{A,\infty},\text{ } A_{\mu} = 1\implies |\mu|\leq K$ and
    \item $\mu\in E^{*}_{A, <\infty}$, $|\mu|\geq K\implies A_{\mu[K, |\mu|]} = 1$.
\end{itemize}
\end{cor}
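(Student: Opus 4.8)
The plan is to reduce to the infinite and finite parts separately via Proposition~\ref{regulardecomposition}, exploiting the drastic simplification afforded by $B \in M_N(\{0,1\})$: for any path $\mu$ either $B_\mu = 0$ or $B_\mu = 1$, and in the latter case $\gcd(A_\mu,|B_\mu|) = 1$. Since every edge of $E_{A,\infty}$ and of $E_{A,<\infty}$ carries $B_e = 1$, every path in these subgraphs has $B_\mu = 1$, so $|B_\mu|/A_\mu = 1/A_\mu$ throughout and the quantity $A_\mu/\gcd(A_\mu,|B_\mu|)$ appearing in Proposition~\ref{finite_regular} is simply $A_\mu$.

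I would treat the finite part first. The hypothesis of Proposition~\ref{finite_regular} holds for $(A_{<\infty},B_{<\infty})$: each $i\in E^0_{A,<\infty}$ has finite isotropy, so $\sup\{A_\mu : \mu\in iE^*_A,\ B_\mu = 1\}<\infty$, and paths of $E_{A,<\infty}$ are among the $B_\mu=1$ paths, whence $\sup_{\mu\in E^*_{A,<\infty}}A_\mu<\infty$. Feeding $B_\mu=B_\omega=1$ and $\gcd=1$ into Proposition~\ref{finite_regular}, the condition ``$A_\omega\mid\tfrac{B_\mu}{\gcd(A_\mu,|B_\mu|)}B_\omega$'' becomes $A_\omega=1$ for all $\mu\in E^K_{A,<\infty}$ and $\omega\in s(\mu)E^*_{A,<\infty}$; this says exactly that every edge occurring at depth $>K$ in a finite-part path is single. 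After absorbing a harmless shift of the index by one (using that $\mu[K,|\mu|]$ begins at the $K$-th edge, and that both stated conditions are monotone in $K$), this is precisely the second bullet. Hence $(G_{B_{<\infty}},E_{A_{<\infty}})$ is regular iff the second bullet holds.

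For the infinite part the key is a growth estimate. I claim the first bullet is equivalent to the subgraph of single edges of $E_{A,\infty}$ (those with $A_e=1$) being acyclic, i.e.\ every cycle of $E_{A,\infty}$ containing an edge with $A_e\ge 2$: a length bound $K$ on $A_\mu=1$ paths forbids such a cycle, and conversely an acyclic single-edge subgraph of the finite graph $E_{A,\infty}$ has all its paths of length $<|E^0_{A,\infty}|$. Writing $V=|E^0_{A,\infty}|$, a pigeonhole/window argument then shows that, when the first bullet holds, every length-$V$ block of edges in a path of $E_{A,\infty}$ repeats a vertex, hence traverses a cycle, hence contains an edge with $A_e\ge 2$; partitioning a length-$n$ path into $\lfloor n/V\rfloor$ blocks gives $A_\mu\ge 2^{\lfloor n/V\rfloor}$. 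With $|B_\mu|=1$ this yields $\max_{\mu\in E^n_{A,\infty}}|B_\mu|/A_\mu\le 2^{-\lfloor n/V\rfloor}$, so Corollary~\ref{contracting_characterization} gives $\limsup_n(\cdots)^{1/n}\le 2^{-1/V}<1$ and the action is contracting. Conversely, if the first bullet fails, the single-edge subgraph contains a cycle $\gamma$ based at some $i\in E^0_{A,\infty}$; then for $g=a^1_i$ we have $A_{\gamma^n}=B_{\gamma^n}=1$, so $g\cdot\gamma^n=\gamma^n$ while the restriction formula \eqref{induc on n} gives $g|_{\gamma^n}=a^{B_{\gamma^n}/A_{\gamma^n}}_i=a^1_i\neq s(\gamma^n)$ for all $n$ (as $i$ has infinite isotropy), so no $K$ witnesses regularity of $g$ and $(G_B,E_A)$ is not regular.

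Assembling: if $(G_B,E_A)$ is regular, the failure argument forces the first bullet, whence the growth estimate gives contracting (the first assertion), and Proposition~\ref{regulardecomposition} forces the finite part regular, whence the second bullet. Conversely, given both bullets with a common $K$ (legitimate by the monotonicity noted above), the first bullet yields contracting via the growth estimate, the second yields finite-part regularity via Proposition~\ref{finite_regular}, and Corollary~\ref{regular_and_contracting_decomposition} upgrades these to regularity of $(G_B,E_A)$. I expect the main obstacle to be the growth estimate for the infinite part, namely turning the qualitative first bullet into the quantitative exponential lower bound $A_\mu\ge 2^{\lfloor n/V\rfloor}$ demanded by Corollary~\ref{contracting_characterization}; the window/pigeonhole argument on the finite graph $E_{A,\infty}$ is the essential device there.
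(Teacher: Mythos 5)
Your proposal is correct and takes essentially the same route as the paper: the same reduction to the infinite and finite parts via Propositions \ref{regulardecomposition}, \ref{Katsura_regular} and \ref{finite_regular} (your use of Corollary \ref{regular_and_contracting_decomposition} is just their combination), the same observation that on $E^{*}_{A,\infty}$ one has $a_{r(\mu)}\cdot\mu=\mu$ with nontrivial restriction exactly when $A_{\mu}=1$, and the same exponential bound forcing the contraction coefficient below $1$. The only cosmetic difference is that you extract the window length for the pigeonhole estimate from acyclicity of the single-edge subgraph, giving $A_{\mu}\geq 2^{\lfloor n/V\rfloor}$ with $V=|E^{0}_{A,\infty}|$, whereas the paper uses the regularity constant $K$ directly to assert $A_{\mu}\geq 2^{|\mu|/K}$.
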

\begin{proof}
By Proposition \ref{Contracting_Katsura}, it suffices to show $(G_{B,\infty}, E_{A,\infty})$ is contracting. By Proposition \ref{regulardecomposition}, $(G_{B,\infty}, E_{A,\infty})$ is regular. For $\mu\in E^{*}_{A,\infty}$, we have $a_{r(\mu)}\cdot \mu = \mu$ if and only if $A_{\mu} = 1$, in which case $a_{r(\mu)}|_{\mu} = a_{s(\mu)}$. By regularity there is $K\in\mathbb{N}$ such that $A_{\mu} = 1$ implies $|\mu|\leq K$. Hence, for $\mu\in E^{*}_{A,\infty}$ we have $A_{\mu}\geq 2^{\frac{|\mu|}{K}}$, so that the contracting coefficient (by Proposition \ref{Katsura_contracting}) is $\rho < (\frac{1}{2})^{\frac{1}{K}} < 1$. By proposition \ref{Cont_coeff_prp}, $(G_{B,\infty}, E_{A,\infty})$ is contracting.

The second point is equivalent (by Proposition \ref{finite_regular}) to regularity of $(G_{B_{<\infty}}, E_{A_{<\infty}})$.

To prove that the two bullet points imply $(G_{B}, E_{A})$ is regular, it therefore suffices to show (by Proposition \ref{regulardecomposition} and \ref{Katsura_regular}) that $(G_{B,\infty},E_{A,\infty})$ is contracting, but this is the same argument as above.
\end{proof}

We use our characterisations of contracting and regular to provide four examples of Katsura pairs that exhibit all the possible combinations of the two properties holding (or not holding). 

\begin{center}
\begin{figure}
\begin{tikzpicture}[scale=1.0]
\node at (0,0) {$2$};
\node[vertex] (vertexe) at (0,0)   {$\quad$}
	edge [->,>=Straight Barb,out=-30,in=30,loop] node[left,pos=0.5]{$\scriptstyle e_{2,2,0}$} (vertexe)
	edge [->,>=Straight Barb,out=-55,in=55,loop,looseness=10] node[right,pos=0.5]{$\scriptstyle e_{2,2,1}$} (vertexe);
\node at (-3,0) {$1$};
\node[vertex] (vertex-a) at (-3,0)   {$\quad$}
	edge [->,>=Straight Barb,out=35,in=145] node[below,swap,pos=0.5]{$\scriptstyle e_{2,1,0}$} (vertexe)
	edge [->,>=Straight Barb,out=210,in=150,loop] node[right,pos=0.5]{$\scriptstyle e_{1,1,1}$} (vertex-a)
	edge [->,>=Straight Barb,out=235,in=125,loop,looseness=10] node[left,pos=0.5]{$\scriptstyle e_{1,1,0}$} (vertex-a)
	edge [<-,>=Straight Barb,out=310,in=230] node[below,swap,pos=0.5]{$\scriptstyle e_{1,2,0}$} (vertexe);
\end{tikzpicture}
\caption{The graph $E_A$ specified by the adjacency matrix $A$ from Examples \ref{example1} and \ref{example2}.}
\label{Katsura_graph_rmkexm}
\end{figure}
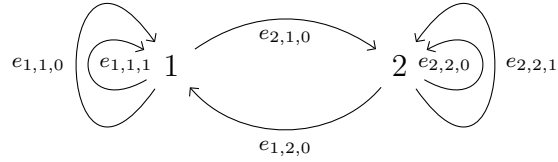
\end{center}

\begin{exm}[Contracting and regular]\label{example1}
Let $(G_B,E_A)$ be the KEP-action defined by 
\begin{equation}\label{Katsura_matrices_rmkexm}
A=\left(\begin{matrix} 2 & 1 \\ 1 & 2 \end{matrix}\right) \quad \text{ and } \quad B=\left(\begin{matrix} 1 & 1 \\ 0 & 1 \end{matrix}\right).
\end{equation}
Then $A$ is the adjacency matrix for the graph $E_A$ depicted in Figure \ref{Katsura_graph_rmkexm}.

Using the relations in \eqref{ExelPardoKatsura_action}, for $\mu \in 1 E_A^*$ and $\nu \in 2E_A^*$, we obtain the self-similar action defined by the partial isomorphisms
\begin{align*}
a_1\cdot e_{1,1,0}\mu=e_{1,1,1}\mu; \quad\quad& a_2 \cdot e_{2,1,0}\mu=e_{2,1,0}\mu;\\
a_1\cdot e_{1,1,1}\mu=e_{1,1,0}(a_1 \cdot \mu); \quad\quad& a_2 \cdot e_{2,2,0}\nu=e_{2,2,1}\nu; \\
a_1\cdot e_{1,2,0}\nu=e_{1,2,0}(a_2 \cdot \nu);\quad\quad& a_2 \cdot e_{2,2,1}\nu=e_{2,2,0}(a_2 \cdot \nu);
\end{align*}

We have $A_{\infty} = \left(\begin{matrix} 2 & 1 \\ 0 & 2 \end{matrix}\right)$ and $A_{<\infty} = \emptyset$. Therefore, being contracting and regular depends only on the first bullet point in Corollary \ref{regular_characterization_01} holding. For $\mu\in E_{A_{\infty}}$, we have $|\mu|>1$ implies $A_{\mu} > 1$. Hence, $(G_{B}, E_{A})$ is contracting and regular. Note also that $\max_{\mu \in E_{A,\infty}^n} \frac{|B_{\mu}|}{A_{\mu}} = \frac{1}{2^{n-1}}$. Therefore, the contracting coefficient
\[
\rho=\limsup_{n \to \infty}\left( \max_{\mu \in E_{A,\infty}^n} \frac{|B_{\mu}|}{A_{\mu}} \right)^{1/n}=\limsup_{n \to \infty} \left( \frac{1}{2^{n-1}} \right)^{1/n} = \frac{1}{2}.
\]
\end{exm}

\begin{exm}[Not contracting and not regular]\label{example2}
Let $(G_B,E_A)$ be the KEP-action defined by 
\begin{equation}\label{Katsura_matrices_rmkexm2}
A=\left(\begin{matrix} 2 & 1 \\ 1 & 2 \end{matrix}\right) \quad \text{ and } \quad B=\left(\begin{matrix} 1 & 1 \\ 1 & 1 \end{matrix}\right).
\end{equation}
Since $A$ is the same as in Example \ref{example1}, $A$ is again the adjacency matrix for the graph $E_A$ in Figure~\ref{Katsura_graph_rmkexm}. The action is also the same other than $a_1 \cdot e_{1,2,0}\nu=e_{1,2,0}\nu$.

However, we have $A_{\infty} = A$  and observe that $\max_{\mu \in E_A^n} \frac{|B_{\mu}|}{A_{\mu}} = 1$. Therefore, the contracting coefficient is $\rho=1$ so that $(G_B,E_A)$ is not contracting.  It follows from Corollary \ref{regular_characterization_01} that  $(G_B,E_A)$ is not regular.\qed
\end{exm}

\begin{exm}[Contracting and not regular]\label{example3.3}
Let $(G_B,E_A)$ be the KEP-action defined by 
\begin{equation}\label{Katsura_matrices_rmkexm3.3}
A=\left(\begin{matrix} 1 & 2 \\ 2 & 1 \end{matrix}\right) \quad \text{ and } \quad B=\left(\begin{matrix} 1 & 1 \\ 0 & 1 \end{matrix}\right).
\end{equation}
We have $A_{<\infty} = \left(\begin{matrix} 1 & 2 \\ 0 & 1 \end{matrix}\right)$ and $A_{\infty} = \emptyset$. Therefore, $G_{B}$ is finite, making $(G_{B}, E_{A})$ automatically contracting. However, for every $k\in\mathbb{N}$, we have $B_{\mu_{k}} = 1$ and $A_{\mu_{k}} = 2$ for $\mu_{k} = e_{1,1,0}^{k-1}e_{1,2,0}$. Corollary \ref{regular_characterization_01} implies that $G_{B}$ is not regular.
\end{exm}

The three examples above have $B\in M_{N}(\{0,1\})$. For an example to be not contracting and regular, by Corollary \ref{regular_characterization_01}, we must have $B\notin M_{N}(\{0,1\})$.

\begin{exm}[Not contracting but regular]\label{example4}
Let $(G_B,E_A)$ be the KEP-action defined by 
\begin{equation}\label{Katsura_matrices_rmkexm4}
A=\left(\begin{matrix} 2 \end{matrix}\right) \quad \text{ and } \quad B=\left(\begin{matrix} 3 \end{matrix}\right).
\end{equation}
We have $A_{\infty} = \left(\begin{matrix} 2 \end{matrix}\right)$ with contracting coefficient $\rho = \frac{3}{2} > 1$. Hence, $(G_{B}, E_{A})$ is not contracting. Moreover, for every $\mu\in E_{A}^{*}$, we have $\frac{B_{\mu}}{A_{\mu}} = \left(\frac{3}{2}\right)^{|\mu|}$. Therefore, for every $k\in\mathbb{Z}$, $|\mu| > |k|$ implies $\frac{kB_{\mu}}{A_{\mu}}\notin\mathbb{Z}$ and hence $a^{k}\cdot \mu\neq \mu.$ It follows that $(G_{B}, E_{A})$ is regular.
\end{exm}

\section{Binary factors of shifts of finite type and self-similar groupoids}\label{Putnam's construction}
In this section we show a certain class of topological dynamical systems introduced by Putnam in \cite{Putnam:Binary} can be realised as shifts on the limit spaces of contracting and regular self-similar groupoids. We would like to note that a generalisation of the spaces Putnam's systems are defined on appears in the Ph.D. thesis of Haslehurst, see \cite[Chapter~3]{Haslehurst}. We will first recall Putnam's construction.

Our notation differs slightly from \cite{Putnam:Binary}. First, Putnam uses $G$ to denote a directed graph, whereas we have reserved $G$ for groupoids and $E$, $H$ for graphs. He also calls the source map $s$ the \emph{initial map} and denotes it $i$, and calls the range map $r$ the \emph{terminal map}, denoting it $t$. The last important distinction is his notation for infinite paths. We write an infinite path $x = (...x_{-2}, x_{-1})\in E^{-\infty}$, whereas Putnam writes $(x_{1},x_{2},...)\in X^{+}_{E}$ for the same path.

\subsection{Putnam's construction}\label{Putnam's construction_subsec}
Let $E$ and $H$ be finite directed graphs, and let $\xi = \xi^{0},\xi^{1}:H \to E$ be two injective graph homomorphisms (embeddings) satisfying $\xi^{0}|_{H^{0}} = \xi^{1}|_{H^{0}}$ and $\xi^{0}(H^{1})\cap \xi^{1}(H^{1})=\emptyset$. We will refer to $\xi$ satisfying the properties above as an \textit{embedding pair}.

For $\mu,\nu\in E^{-\infty}$ with $\mu = \ldots \mu_{-2}\mu_{-1}$ and $\nu = \ldots \nu_{-2}\nu_{-1}$, we will say $\mu \sim_{\xi} \nu$ if $\mu = \nu$, or there is $n<0$, $i\in\{0,1\}$, and $(y_{k})_{k\leq n}\subseteq H^{1}$ such that $\mu_{k} = \xi^{i}(y_{k})$ and $\nu_{k} = \xi^{1-i}(y_{k})$, for all $k\leq n$, and one of the following hold:
\begin{enumerate}
    \item $n=-1$,
    \item $n < 1$, $\mu_{j} = \nu_{j}$ for all $j > n+1$, and there is $y_{n+1}\in H^{1}$ such that $\mu_{n+1} = \xi^{1-i}(y_{n+1})$ and $\nu_{n+1} = \xi^{i}(y_{n+1})$, or
    \item $n<1$, $\mu_{j} = \nu_{j}$ for all $j\geq n+1$, and $\mu_{n+1}= \nu_{n+1}\notin H_{\xi}^{1}$.
\end{enumerate}
By \cite[Proposition~3.7]{Putnam:Binary}, $\sim_{\xi}$ is an equivalence relation and $\mu \sim_{\xi} \nu$ for $\mu,\nu \in E^{-\infty}$ implies $\sigma(\mu)\sim_{\xi}\sigma(\nu)$. Therefore, if we denote by $\mathcal{J}_{\xi}$ the quotient space $E^{-\infty}/ \sim_{\xi}$, the shift $\sigma$ descends to a continuous mapping $\sigma_{\xi}:\mathcal{J}_{\xi} \to \mathcal{J}_{\xi}$. 

Putnam shows in \cite[Section~3]{Putnam:Binary} that if $E$ is assumed \textit{primitive} and $\xi$ satisfies an extra hypothesis $(H3)$, then $\sigma_{\xi}$ is an expanding surjective local homeomorphism. In addition, he describes the expanding metric in great detail. We will show the same, but with hypothesis $(H3)$ removed and primitive weakened to \textit{no sources} by proving that $(\sigma_{\xi}, \mathcal{J}_{\xi})$ is isomorphic to the limit space dynamical system of a contracting and regular self-similar groupoid acting on $E$. We then apply the recent work on these dynamical systems in \cite{BBGHSW}. We do not, however, extend Putnam's metric results.

\subsection{Putnam's binary factor maps as self-similar groupoid actions on graphs}\label{Putnam systems}
In this section we show that Putnam's construction gives rise to a self-similar groupoid on a graph.

Let $\xi = \xi^{0},\xi^{1}:H \to E$ be an embedding pair. Denote $H^{0}_{\xi}\coloneqq \xi^{0}(H^{0}) = \xi^{1}(H^{0})$ and $H_{\xi}^{1} \coloneqq \xi^{0}(H^{1})\cup \xi^{1}(H^{1})$. Let $\tilde{G}_{\xi} = \mathbb{Z}\times E^{0}$, with groupoid structure determined by the projection $\pi:\tilde{G}_{\xi} \to E^{0}$. This means that $(m,v),(n,w)\in \tilde{G}_{\xi}$ are composable if and only if $v = w$, in which case $(m,v)(n,v) = (m+n, v)$. Hence, $d(m,v)=c(m,v)= \pi(m,v)$, so that $\tilde{G}_{\xi}$ is a \textit{group bundle} in the sense that $\pi^{-1}(v)=\ZZ$ for all $v \in E^0$.

For $v\in H^{0}$, let $\ell(v)$ be the maximum length of a path $y$ in $H$ satisfying $r(y) = v$. Consider the quotient bundle $G_{\xi} = (\cup_{v\in H^{0}} \mathbb{Z}/2^{\ell(v)}\mathbb{Z}\times\{\xi^{0}(v)\}) \cup (\{0\}\times (E^{0}\setminus H_{\xi}^{0}))$, where we make the convention that if $\ell(v) = \infty$, then $\mathbb{Z}/2^{\ell(v)}\mathbb{Z} = \mathbb{Z}$. We aim to define a (faithful) self-similar groupoid action of $G_{\xi}$ on $E$.

For $(m,v) \in \tilde{G}_{\xi}=\mathbb{Z}\times E^{0}$ and $e\in vE^{1}$, we define 
\begin{align}\label{Putnam_action}
(m,v) \cdot e &=
\begin{cases}
e \text{ if } e \notin H^{1}_{\xi},\\
\xi^{j}(h)\text{ if } e =\xi^{i}(h)\text{ for } h\in H^{1}, i,j,n \in\{0,1\}\text{ such that } m+i = 2n+j 
\end{cases}\\ \notag
(m,v)|_{e} &= 
\begin{cases}
(0,s(e)) \text{ if } e \notin H^{1}_{\xi},\\
(n, s(e))\text{ if } e =\xi^{i}(h)\text{ for }h\in H^{1}, i,j,n\in\{0,1\}\text{ such that } m+i = 2n+j. 
\end{cases}
\end{align}
This defines an action-restriction pair in the sense of Definition \ref{dfn:action_restriction}.  To find the kernel of the induced action $\phi:\tilde{G}_{\xi}\to \PI(E^{*})$, let us describe the action and restriction in terms of the binary odometer action. Let $\alpha:\mathbb{Z} \curvearrowright \{0,1\}^{*}$ be the self-similar group representation of $\mathbb{Z}$ by the  $2$-odometer action \cite[Section~1.7.1]{Nekrashevych:Self-similar}.  In our language, this is the self-similar group defined by the Katsura pair $A = (2)$ and $B = (1)$. More explicitly, for $m\in\mathbb{Z}$ and $i\in\{0,1\}$, we have
\[
a^{m}\cdot i = j\text{ and }a^{m}|_{i} = a^{n},\text{ where }m + i = 2n + j.
\]
For example, we have $a\cdot 1^{k} = 0^{k}$, $a|_{1^{k}} = a$ and $a\cdot 0 = 1$, $a|_{0} = \text{id}$. This completely describes $a$ as an automorphism of $\{0,1\}^{*}$.

For $n\geq 0$, $i = i_{1}i_{2}..i_{n}\in\{0,1\}^{n}$, and $h = h_{1}h_{2}...h_{n}\in H^{n}$, let 
\begin{equation}\label{odometer_embedding}
\xi^{i}(h) \coloneqq \xi^{i_{1}}(h_{1})\xi^{i_{2}}(h_{2})...\xi^{i_{n}}(h_{n}).
\end{equation}
Notice that the embedding $\xi:H^1 \to E^1$ extends to an embedding $\xi^i:H^n \to E^n$ via \eqref{odometer_embedding}. If $\mu \in E^n$ satisfies $\mu = \xi^{i}(h)$ for some $i\in \{0,1\}^{n}$ and $h\in vH^{n}$, then for any $\nu \in s(\mu)E^*$ we have
\begin{equation}
\label{eq:Putnam_odometer}
(m,r(\mu)) \cdot \mu\nu = \xi^{(a^m \cdot i)}(h)\, (a^m|_i, s(\mu)) \cdot \nu. 
\end{equation}
If $e\notin H_{\xi}^{1}$, then for any $\nu\in s(e)H^{*}$, we have $(m,r(e)) \cdot e\nu = e\nu$. Thus, the action $\phi:\tilde{G}_{\xi}\to \PI(E^{*})$ is completely described by the $2$-odometer action and the trivial action.

Since the kernel of $\alpha:\mathbb{Z}\curvearrowright\{0,1\}^{n}$ is $2^{n}\mathbb{Z}$, the kernel of the $\tilde{G}_{\xi}$-action on $E^{*}$ is given by $(\cup_{v \in H^{0}}2^{\ell(v)}\mathbb{Z}\times\{\xi^{0}(v)\}) \cup (\mathbb{Z} \times (E^{0}\setminus H_{\xi}^{0}))$. Hence, the quotient of the $\tilde{G}_{\xi}$-action is $G_{\xi}$. The quotient action $\phi:G_{\xi}\to \PI(E^*)$ is then self-similar by the results in Section \ref{subsec:ssa}.

\begin{center}
\begin{figure}
\begin{tikzpicture}[scale=1]
\node at (-2,0) {$H:$};
\node[vertex] (vertexy) at (-1,0) {$v$}
	edge [->,>=Straight Barb,out=60,in=120,loop] node[above]{$e$} (vertexy);
\draw [-to](0,0) -- (1,0) node[midway, above]{$\xi$};
\node[vertex] (vertexyy) at (2,0) {$v$}
	edge [->,>=Straight Barb,out=-30,in=30,loop] node[right]{$f$} (vertexyy)
	edge [->,>=Straight Barb,out=240,in=300,loop] node[below]{$e_1$} (vertexyy)
	edge [->,>=Straight Barb,out=60,in=120,loop] node[above]{$e_0$} (vertexyy);
\node at (4,0) {$:E$};
\end{tikzpicture}
\caption{The embedding pair $\xi: H \to E$ for Example \ref{Putnam emb ex}.}
\label{Putnam groupoid ex}
\end{figure}
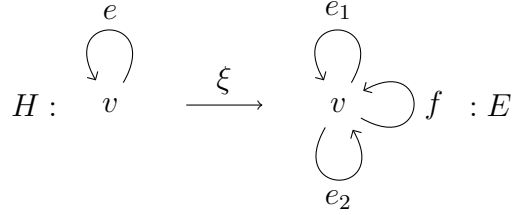
\end{center}

\begin{exm}\label{Putnam emb ex}
Consider the graphs $E$ and $H$ in Figure \ref{Putnam groupoid ex} along with the embedding pair $\xi :H \to E$ defined by 
$\xi^{0}(e)=e_{0}, \, \xi^{1}(e)=e_{1}.$
Then \eqref{Putnam_action} gives partial isomorphisms generating a self-similar groupoid $(G_\xi,E)$ via: 
\begin{align*}
&(1,v) \cdot e_{0} =e_{1} \quad (1,v)|_{e_{0}}=(0,v), & &(1,v) \cdot e_{1} =e_{0} \quad (1,v)|_{e_{1}}=(1,v),\\
&(1,v) \cdot f =f \quad (1,v)|_f=(0,v)
\end{align*}
$(G_{\xi}, E)$ is not a self-similar groupoid action arising from a KEP-action. Indeed, if $(G_{\xi}, E)$ was isomorphic to $(G_{B}, E_{A})$ for some Katsura pair, then $A = (3)$ and $B = (n)$ for some $n\in\mathbb{Z}$. The action of $G_{\xi} = \mathbb{Z}$ on $E$ is non-trivial and contracting, so its contracting coefficient satisfies $0 <\rho = \frac{|n|}{3} < 1$. So either $|n| = 1$ or $|n| = 2$, but neither of these cases allow for $\mathbb{Z}\cdot f = f$, $\mathbb{Z}|_{f} = 0$. Thus, $(G_{\xi}, E)$ is not isomorphic to a KEP-action.

However, we will show in Section \ref{out-splits} that every self-similar groupoid from an embedding pair is an \emph{out-splitting} of a KEP-action. \qed
\end{exm}

\subsection{Properties of $(G_{\xi}, E)$}

In this section we prove that the self-similar groupoid actions associated with a binary factor are contracting and regular. Moreover, we prove that the shift map on the limit space of the self-similar groupoid is conjugate to Putnam's expanding local homeomorphism on the quotient space $\mathcal{J}_{\xi}=E^{-\infty}/\sim_{\xi}$ described in Section \ref{Putnam's construction}. 

\begin{prp}
Let $\xi = \xi^{0},\xi^{1}:H \to E$ be an embedding pair. Then, $(G_{\xi}, E)$ is contracting and regular.
\end{prp}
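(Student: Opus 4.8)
The plan is to reduce every computation to the behaviour of the $2$-odometer action $\alpha$ together with the trivial action on edges outside $H^1_\xi$, exploiting the explicit formula \eqref{eq:Putnam_odometer}. Write a typical element as a lift $(m,v)\in\tilde G_\xi = \ZZ\times E^0$. Two structural features will be used repeatedly: (i) by \eqref{Putnam_action}, restricting any element to an edge $e\notin H^1_\xi$ yields the unit $(0,s(e))$, and units restrict to units; and (ii) if $\mu=\xi^i(h)$ lies entirely in $H^1_\xi$ (with $i\in\{0,1\}^k$, $h\in H^k$), then by \eqref{eq:Putnam_odometer} we have $(m,r(\mu))\cdot\mu=\xi^{a^m\cdot i}(h)$ and $(m,r(\mu))|_\mu=(a^m|_i,s(\mu))$, so both action and restriction along $\mu$ are governed by the odometer on the label $i$. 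Since $\xi^0,\xi^1$ are injective with $\xi^0(H^1)\cap\xi^1(H^1)=\emptyset$, the factorisation $\mu=\xi^i(h)$ is unique; hence $(m,r(\mu))\cdot\mu=\mu$ if and only if $a^m\cdot i=i$.

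For the \emph{contracting} property (Definition~\ref{dfn:contracting}) I claim $F=\{(m,v):m\in\{-1,0,1\},\ v\in E^0\}\subseteq G_\xi$ is a contracting core; it is finite since $E^0$ is. Given a lift $(m,v)$, I use the odometer recursion $a^m|_{i_1\cdots i_t}=a^{n_t}$ with $n_0=m$ and $n_t=\lfloor(n_{t-1}+i_t)/2\rfloor$ to obtain the uniform bound $|n_k|\le |m|/2^k+1$. Consequently, once $2^k>|m|$, any restriction along a path $\mu=\xi^i(h)$ of length $k$ lying in $H^1_\xi$ lands in $F$, while every path escaping $H^1_\xi$ restricts to a unit by (i), again in $F$. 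Thus for all $k$ with $2^k>|m|$ and every $\mu\in vE^k$ we have $(m,v)|_\mu\in F$, proving contraction.

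For \emph{regularity} (Definition~\ref{def:regular}) I again split on whether $\mu$ stays inside $H^1_\xi$, the new ingredient being the role of $\ell$. If $\mu\in vE^k$ escapes $H^1_\xi$, then $(m,v)|_\mu$ is a unit by (i), irrespective of $K$. If $\mu=\xi^i(h)$ lies in $H^1_\xi$ with $r(\mu)=v=\xi^0(w)$, then $h\in wH^k$, which forces $k\le\ell(w)$. Hence, when $\ell(w)<\infty$, the choice $K=\ell(w)+1$ makes every $\mu\in vE^k$ with $k\ge K$ escape, so $(m,v)|_\mu$ is a unit and regularity holds. When $\ell(w)=\infty$ the fibre is $\ZZ$ and $m$ is a genuine integer; I then pick $K$ with $2^K>|m|$. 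For $|\mu|\ge K$ with $(m,v)\cdot\mu=\mu$: if $\mu$ escapes we are done; if $\mu=\xi^i(h)$ lies in $H^1_\xi$ then $a^m\cdot i=i$, and since the odometer acts on $\{0,1\}^{|\mu|}$ as a single $2^{|\mu|}$-cycle this forces $2^{|\mu|}\mid m$; as $2^{|\mu|}\ge 2^K>|m|$ we get $m=0$, so $(m,v)$ is already a unit and $(m,v)|_\mu=s(\mu)$.

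The step I expect to be most delicate is the regularity argument at vertices with $\ell(w)=\infty$: there one cannot bound the lengths of paths inside $H^1_\xi$, and must instead extract triviality of the entire group element from a single fixed path, using that the odometer restricts a $2^{|\mu|}$-cycle. The uniqueness of the labelling $\mu=\xi^i(h)$, coming from injectivity and disjointness of the embeddings, is precisely what transfers the hypothesis $(m,v)\cdot\mu=\mu$ to $a^m\cdot i=i$. Finally, since the restriction maps descend to $G_\xi$ (as recorded in Section~\ref{subsec:ssa}), all of the above computations, performed with arbitrary integer lifts, are well defined in the quotient groupoid.
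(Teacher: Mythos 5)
Your proof is correct and takes essentially the same route as the paper: both arguments split a path according to whether it stays inside $H^{1}_{\xi}$, where the action and restriction are governed by the $2$-odometer via \eqref{eq:Putnam_odometer}, or contains an edge outside $H^{1}_{\xi}$, where the restriction collapses to a unit by \eqref{Putnam_action}, and both handle vertices with $\ell(w)<\infty$ by observing that paths inside $H^{1}_{\xi}$ ending there have length at most $\ell(w)$. The only difference is cosmetic: you verify the odometer's contraction and regularity by direct computation (the recursion bound $|n_k|\leq |m|/2^k+1$ and the $2^{|\mu|}$-cycle structure of the level-$|\mu|$ action), whereas the paper cites these standard odometer facts from Nekrashevych.
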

\begin{proof}
We first show $(G_{\xi},E)$ is contracting. We show the nucleus is contained in $\mathcal{N} = (\{-1,0,1\}\times H_{\xi}^{0})\cup \{0\}\times (E^{0}\setminus H_{\xi}^{0}).$
    Let $g = (m,w)\in G_{\xi}$. If $w\notin H^{0}_{\xi}$, then $m = 0$ and hence $g\in\mathcal{N}$. So suppose $w = \xi^{0}(v)$. First, assume $\ell(v) < \infty$. Then, if there is a path $\mu \in wE^{*}$ such that $|\mu| = n\geq \ell(v) +1$, at least one of its edges $\mu_{k}\notin H_{\xi}^{1}$. So, we have by \eqref{Putnam_action} that 
\begin{equation}\label{notinimageofH}
 g|_{\mu} = (g|_{\mu_{1}...\mu_{k}})|_{\mu_{k+1}..\mu_{n}} = (0,s(\mu_{k}))|_{{\mu_{k+1}..\mu_{n}}} = (0, s(\mu_{n})) \in \mathcal{N}.
 \end{equation}
If there are no paths $\mu$ of length $|\mu|\geq \ell(v)+1$ in $E^{*}$ ending at $\xi^{0}(v)$, then the contracting condition is satisfied for $g$ vacuously.

Now, suppose $w =\xi^{0}(v)$ and $\ell(v) = \infty$. The 2-odometer action of $\mathbb{Z}$ is contracting, with $\mathcal{N}=\{-1,0,1\}$, see \cite[Section~1.7.1]{Nekrashevych:Self-similar}. So, let $K\in\mathbb{N}$ be the number such that $a^m|_{i}\in\{-1,0,1\}$ for all $|i|\geq K$. Let $\mu \in wE^{*}$ have length $|\mu| = n\geq K$. If there is $k\leq n$ such that $\mu_{k}\notin \xi^{0}(H^{1})\cup \xi^{1}(H^{1})$, then \eqref{notinimageofH} implies $g|_{\mu} = (0, s(\mu)) \in \mathcal{N}$. Otherwise, $\mu_{1}...\mu_{n} = \xi^{i}(h)$ for some $i\in\{0,1\}^{n}$ and $h\in H^{n}$. By \eqref{eq:Putnam_odometer}, letting $a^{l} = a^{m}|_{i}$, we have 
\[
g|_{\mu}  = (l, s(\mu))\in\mathcal{N}.
\]
Thus, $(G_{\xi}, E)$ is contracting.

We now show $(G_{\xi}, E)$ is regular. Let $g = (m,w)$ and $\mu\in wE^{*}$ satisfy $g\cdot \mu = \mu$ and $g|_{\mu}\neq s(\mu)$. Then,  \eqref{notinimageofH} implies $\mu = \xi^{i}(h)$ for some $i\in \{0,1\}^{*}$ and $h\in H^{*}$. By \ref{eq:Putnam_odometer}, we have
\[\xi^{a^{m}\cdot i}(h) = g\cdot \mu = \mu = \xi^{i}(h).\]
The $2$-odometer action is regular, so let $M\in\mathbb{N}$ be such that if $a^{m}\cdot \tilde{i} = \tilde{i}$ and $|\tilde{i}| > M$, then $a^{m}|_{\tilde{i}} = e$. Hence, we have $|\mu| = |i| \leq M$. Therefore, $(G_{\xi}, E)$ is regular.
\end{proof}

\subsection{Equality of the dynamical systems $(\sigma_{\xi}, \mathcal{J}_{\xi})$ and $(\tilde{\sigma}, \Jj_{G_{\xi},E})$}

In this section we prove the following.

\begin{thm}
\label{thm:equality}
Let $\xi = \xi^{0},\xi^{1}:H \to E$ be an embedding pair and $(G_{\xi},E)$ be the associated self-similar groupoid action on $E$. Then, the equivalence relation $\sim_{\xi}$ on $E^{-\infty}$ is equal to the asymptotic equivalence relation $\sim_{ae}$ on $E^{-\infty}$. Thus, $(\sigma_{\xi}, \mathcal{J}_{\xi}) = (\tilde{\sigma}, \Jj_{G_{\xi},E})$.
\end{thm}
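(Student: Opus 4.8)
The plan is to prove the set-level equality $\sim_{\xi}\,=\,\sim_{\aeq}$ of relations on $E^{-\infty}$; the assertion $(\sigma_{\xi},\mathcal{J}_{\xi})=(\tilde\sigma,\Jj_{G_{\xi},E})$ is then immediate, since both are the quotient of $E^{-\infty}$ by the \emph{same} relation and both maps are induced by the single shift $\sigma$ on $E^{-\infty}$. The engine for everything is the local description of the action recorded in \eqref{Putnam_action} and \eqref{eq:Putnam_odometer}: an element $(m,v)$ applies the $2$-odometer $a^{m}$ to the labels of the maximal initial $H^{1}_{\xi}$-run of a path, via $\xi^{i}(h)\mapsto\xi^{a^{m}\cdot i}(h)$, and fixes every edge from the first edge outside $H^{1}_{\xi}$ onward, with restriction becoming trivial there. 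Because $\sim_{\aeq}$ is defined through a \emph{finite} $F\subseteq G_{\xi}$, I may take all $g_{n}=(m_{n},\cdot)$ with $|m_{n}|\le M$ for a fixed $M$; in the forward direction I will only ever need the nucleus elements $(\pm1,\cdot)$ and $(0,\cdot)$ computed in the previous proposition. Throughout, the odometer reads the leftmost (most negative) index as the least significant binary digit, so its carry propagates toward index $-1$.

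For $\sim_{\xi}\Rightarrow\sim_{\aeq}$ I treat the four possibilities ($\mu=\nu$ and cases (1)--(3) in the definition of $\sim_{\xi}$) separately and exhibit explicit $g_{n}$. Writing edges of $H^{1}_{\xi}$ by their binary labels $\xi^{i}(\cdot)$, the relation $\mu\sim_{\xi}\nu$ says precisely that the label sequences of $\mu$ and $\nu$ differ by the carry pattern of adding $\pm1$ in binary. Concretely, for $i=0$ the choice $g_{n}=(-1,r(\mu_{n}))$ realizes $a^{-1}\cdot(\mu_{n}\cdots\mu_{-1})=\nu_{n}\cdots\nu_{-1}$: the borrow flips a run of $0$'s into $1$'s and is absorbed at the ``switch'' edge of case (2), at the non-$H^{1}_{\xi}$ edge of case (3), or never (case (1)); beyond the absorbing position the labels agree and $(0,\cdot)=\id$ works. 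The case $i=1$ is symmetric with $(+1,\cdot)$, and $\mu=\nu$ is trivial. This produces a family drawn from the finite set $\{(-1,\cdot),(0,\cdot),(1,\cdot)\}\subseteq\mathcal{N}$, so $\mu\sim_{\aeq}\nu$.

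For the converse, suppose $\mu\sim_{\aeq}\nu$ with $|m_{n}|\le M$ and $\mu\ne\nu$, and set $p^{*}=\max\{k\le-1:\mu_{k}\ne\nu_{k}\}$, so $\mu_{j}=\nu_{j}$ for $j>p^{*}$. The first step is to show every edge of $\mu$ (hence of $\nu$) at a position $\le p^{*}$ lies in $H^{1}_{\xi}$ with the \emph{same} underlying $H$-edge in $\nu$: if some $\mu_{j}\notin H^{1}_{\xi}$ with $j\le p^{*}$, the action fixes everything from the first such edge on, forcing $\mu,\nu$ to agree at $p^{*}$, a contradiction; and on an all-$H^{1}_{\xi}$ run the odometer alters labels, not $H$-edges. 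Writing $\mu_{k}=\xi^{a_{k}}(y_{k})$ and $\nu_{k}=\xi^{b_{k}}(y_{k})$ for $k\le p^{*}$, the second and main step reads off the label pattern from the bounded-carry constraint $\mathrm{val}(a_{n}\cdots a_{p^{*}})+m_{n}\equiv\mathrm{val}(b_{n}\cdots b_{p^{*}})\pmod{2^{\,p^{*}-n+1}}$, with least significant digit at index $n$. Here I split on whether the carry out of position $p^{*}$ is forced to vanish: if position $p^{*}+1$ exists and lies in $H^{1}_{\xi}$ (where $\mu=\nu$), any carry would alter it, so the addition is absorbed below $p^{*}$ and, letting $n\to-\infty$ with $|m_{n}|\le M$, this pins the labels to the ``dyadic'' flip pattern of case (2); if instead position $p^{*}+1$ lies outside $H^{1}_{\xi}$, or $p^{*}=-1$, the carry may escape, forcing either a constant flip, giving case (3) (absorbing edge) or case (1) ($p^{*}=-1$), or the same half-open pattern, which again falls under case (2). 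In every branch $\mu\sim_{\xi}\nu$.

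The main obstacle is this second step of the converse: extracting, from the single inequality $|m_{n}|\le M$ holding for all $n$, the exact trichotomy of admissible carry patterns and matching each to the correct one of Putnam's cases (1)--(3). In particular one must keep careful track of the first non-$H^{1}_{\xi}$ edge, which is exactly what distinguishes case (3) from case (2) and is what absorbs the odometer's borrow. Everything else reduces to the bookkeeping of binary addition of $\pm1$ already implicit in \eqref{eq:Putnam_odometer}.
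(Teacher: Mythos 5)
Your proposal is correct, and its skeleton matches the paper's proof at the conceptual level: both arguments isolate the first edge outside $H^{1}_{\xi}$ and reduce everything to the binary $2$-odometer acting on the $\xi$-labels via \eqref{eq:Putnam_odometer}. The difference is in the route through the converse. The paper factors the argument through two intermediate statements: Lemma \ref{lem:equality1} (a path with infinitely many non-$H^{1}_{\xi}$ edges is asymptotically equivalent only to itself) and Lemma \ref{lem:equality2}, which translates $\sim_{ae}$ for $(G_{\xi},E)$ into asymptotic equivalence of the label sequences under the $2$-odometer together with the boundary conditions $(1)^{*}$/$(2)^{*}$; it then simply quotes Nekrashevych's classification of odometer asymptotic equivalence \cite[Section~3.1.2]{Nekrashevych:Self-similar} and matches cases. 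You instead pivot on $p^{*}=\max\{k\le -1:\mu_{k}\neq\nu_{k}\}$ (your first step, which subsumes the role of Lemma \ref{lem:equality1} in the converse), and you re-derive the odometer trichotomy by hand from the congruence $\mathrm{val}(a_{n}\cdots a_{p^{*}})+m_{n}\equiv \mathrm{val}(b_{n}\cdots b_{p^{*}})\pmod{2^{\,p^{*}-n+1}}$ with $|m_{n}|\le M$: boundedness forces $m_{n}\in\{-1,0,1\}$ for $n$ small, $m_{n}=0$ is excluded by $\mu_{p^{*}}\neq\nu_{p^{*}}$, and the wrap/no-wrap dichotomy for the carry-out $l_{n}$ (together with your correct observation that a unit carry necessarily flips the digit at $p^{*}+1$, which is what forces $l_{n}=0$ when $\mu_{p^{*}+1}\in H^{1}_{\xi}$) yields exactly Putnam's cases $(1)$--$(3)$; I checked this branch analysis and it closes, including the consistency across varying $n$ that pins a single global flip pattern. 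Your forward direction, with the explicit nucleus witnesses $(\pm 1,\cdot)$ and $(0,\cdot)$ realizing the borrow and its absorption at the switch edge or the non-$H^{1}_{\xi}$ edge, is a concrete replacement for the reverse implication of Lemma \ref{lem:equality2}. What the paper's factorization buys is a reusable intermediate lemma and outsourced carry bookkeeping; what yours buys is a fully self-contained argument that never invokes the odometer classification as a black box, at the price of the ``main obstacle'' you flag --- which your value-congruence analysis does genuinely resolve.
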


We will begin by first proving a couple of lemmas.
\begin{lem} \label{lem:equality1}
Let $\xi = \xi^{0},\xi^{1}:H \to E$ be an embedding pair and $(G_{\xi},E)$ be the associated self-similar groupoid action on $E$. Suppose $\mu \in E^{-\infty}$ has the property that $\mu_{j}\notin H_{\xi}^{1}$ for infinitely many $j\in\mathbb{N}$. Then, $\mu$ is only asymptotically equivalent to itself.
\end{lem}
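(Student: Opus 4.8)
The plan is to unwind the definition of $\sim_{ae}$ and exploit the structural feature, visible in \eqref{Putnam_action}, that any edge $e\notin H_{\xi}^{1}$ is fixed by every element of $G_\xi$ \emph{and}, crucially, its restriction collapses to the unit $(0,s(e))$. Since units act trivially, an edge outside $H_{\xi}^{1}$ behaves as a ``reset'': once it is consumed, the element carried forward is a unit and therefore freezes the entire tail of the path. The proof will convert this local observation into global equality using the hypothesis that $\mu$ has infinitely many such coordinates.

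Concretely, suppose $\mu \sim_{ae} \nu$, so there is a finite $F\subseteq G_\xi$ and a sequence $(g_n)_{n<0}\subseteq F$ with $d(g_n)=r(\mu_n)$ and $g_n\cdot(\mu_n\cdots\mu_{-1})=\nu_n\cdots\nu_{-1}$ for every $n<0$. I would fix an arbitrary coordinate $k_0<0$. Because infinitely many coordinates $j$ satisfy $\mu_j\notin H_{\xi}^{1}$, this set is unbounded below, so I may select one with $j\le k_0$. Applying the witnessing relation at the specific index $n=j$ and expanding the left-hand side via \eqref{eq:restriction}, the first edge is $\mu_j\notin H_{\xi}^{1}$, whence \eqref{Putnam_action} gives $g_j\cdot\mu_j=\mu_j$ and $g_j|_{\mu_j}=(0,s(\mu_j))$. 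As the restriction is now a unit, it acts as the identity on the remaining path, so $\nu_j\cdots\nu_{-1}=\mu_j\cdots\mu_{-1}$, i.e.\ $\nu_k=\mu_k$ for all $k\ge j$, and in particular $\nu_{k_0}=\mu_{k_0}$. Since $k_0$ was arbitrary, $\mu=\nu$.

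The computation is essentially routine, so I do not anticipate a serious obstacle; the only point requiring care is the choice of starting index. One could be tempted to begin the relation at a generic $n$ and track the elements $g_n|_{\mu_n\cdots\mu_{k-1}}$ through many coordinates before meeting an edge outside $H_{\xi}^{1}$, which would needlessly entangle the argument with the finite set $F$. The clean move, which I would emphasise, is to start the relation exactly at a coordinate $j$ with $\mu_j\notin H_{\xi}^{1}$, so that the reset occurs immediately and no control over intermediate restrictions is needed; in fact finiteness of $F$ plays no role in this direction. I would also spell out that unboundedness below of $\{\,j:\mu_j\notin H_{\xi}^{1}\,\}$ is precisely what allows $j\le k_0$ to be chosen for every $k_0$, which is the mechanism turning ``infinitely many reset coordinates'' into ``agreement in every coordinate.''
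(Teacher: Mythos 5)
Your proof is correct and takes essentially the same approach as the paper: the paper's proof likewise applies the witnessing element $g_{j}$ exactly at coordinates $j$ with $\mu_{j}\notin H_{\xi}^{1}$ (choosing a decreasing sequence of such coordinates, which is the same mechanism as your ``for every $k_0$ pick $j\le k_0$''), uses \eqref{Putnam_action} to see the restriction is the unit $(0,s(\mu_{j}))$, and concludes $\nu_{j}\cdots\nu_{-1}=\mu_{j}\cdots\mu_{-1}$, hence $\mu=\nu$. Your side remark that finiteness of $F$ plays no role here is accurate and is implicit in the paper's argument as well.
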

\begin{proof}
Suppose $\nu \in E^{-\infty}$ and $\mu \sim_{ae} \nu$. Let $F\subseteq G_{\xi}$ be a finite set and $(g_{n})_{n<0}\subseteq F$ be a sequence such that $d(g_{n}) = r(\mu_{n})$ and $g_{n}\cdot \mu_{n}...\mu_{-1} = \nu_{n}...\nu_{-1}$ for all $n<0$. Let $(j_{n})_{n<0}\subseteq \{n<0 \mid n \in \mathbb{Z}\}$ be a decreasing sequence such that $\mu_{j_{n}}\notin H_{\xi}^{1}$ for all $n<0$. Then, by definition of the action, we have that $g_{j_{n}}\cdot \mu_{j_{n}} = \mu_{j_{n}}$, $g_{j_{n}}|_{\mu_{j_{n}}} = (0, s(x_{j_{n}}))$ and hence $\nu_{j_{n}}...\nu_{-1} = g_{j_{n}}\cdot \mu_{j_{n}}...\mu_{-1} = \mu_{j_{n}}...\mu_{-1}$ for all $n<0$. So $\mu = \nu$.
\end{proof}

\begin{lem}
    \label{lem:equality2}
Let $\xi = \xi^{0},\xi^{1}:H\to E$ be an embedding pair and $(G_{\xi},E)$ be the associated self-similar groupoid action on $E$.  For $\mu,\nu \in E^{-\infty}$, $\mu \sim_{ae} \nu$ if and only if $\mu = \nu$ or there is $n<0$ such that $(y_{k})_{k\leq n} \subseteq H^{1}$, $(i_{k})_{k\leq n}, (i'_{k})_{k\leq n}\subseteq \{0,1\}$ with $\mu_{k} = \xi^{i_{k}}(y_{k})$, $\nu_{k} = \xi^{i'_{k}}(y_{k})$, for all $k\leq n$, $(...i_{n-1}i_{n})\sim_{ae}(...i'_{n-1}i'_{n})$ relative to the 2-odometer action of $\mathbb{Z}$, and one of the following hold:
    \begin{enumerate}
        \item[(1)*] $n = -1$, or
        \item[(2)*] $n < 1$, $\mu_{j} = \nu_{j}$ for all $j\geq n+1$ and $\mu_{n+1} = \nu_{n+1}\notin H_{\xi}^{1}$.
\end{enumerate}
\end{lem}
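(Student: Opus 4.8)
The plan is to reduce both implications to the $2$-odometer, using the description of the action in \eqref{eq:Putnam_odometer} together with the fact that $G_{\xi}$ acts trivially off $H^{1}_{\xi}$. The key preliminary observation I would record is this: by \eqref{Putnam_action}, for $e\notin H^{1}_{\xi}$ we have $(m,v)|_{e}=(0,s(e))$, and the unit $(0,w)$ satisfies $(0,w)\cdot e'=e'$ and $(0,w)|_{e'}=(0,s(e'))$ for \emph{every} edge $e'$. Hence as soon as a finite path meets an edge outside $H^{1}_{\xi}$, every subsequent restriction is the unit and every subsequent edge is fixed. Equivalently, $g|_{\mu_{l}\cdots\mu_{k-1}}$ is non-trivial only if all of $\mu_{l},\dots,\mu_{k-1}$ lie in $H^{1}_{\xi}$, in which case it is computed by the $2$-odometer acting on the binary labels via \eqref{eq:Putnam_odometer}. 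Recall also that every element of $G_{\xi}$ fixes edges outside $H^{1}_{\xi}$ and sends $\xi^{i}(h)$ to some $\xi^{j}(h)$ with the same $h$.

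For the forward direction, suppose $\mu\sim_{ae}\nu$ with witnessing finite set $F$ and sequence $(g_{l})_{l<0}\subseteq F$, and assume $\mu\neq\nu$. Writing $g_{l}^{(k)}\coloneqq g_{l}|_{\mu_{l}\cdots\mu_{k-1}}$, the $k$-th letter of $g_{l}\cdot\mu_{l}\cdots\mu_{-1}$ is $g_{l}^{(k)}\cdot\mu_{k}=\nu_{k}$; by the remark above, at each position either $\mu_{k}=\nu_{k}\notin H^{1}_{\xi}$, or $\mu_{k}=\xi^{i_{k}}(y_{k})$ and $\nu_{k}=\xi^{i'_{k}}(y_{k})$ for the same $y_{k}\in H^{1}$. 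The crucial claim is that $\mu_{k_{0}}\neq\nu_{k_{0}}$ forces $\mu_{j}\in H^{1}_{\xi}$ for all $j<k_{0}$: indeed $g_{l}^{(k_{0})}$ is then non-trivial, which is impossible once some $\mu_{j}$ with $l\le j<k_{0}$ lies outside $H^{1}_{\xi}$, and letting $l\to-\infty$ rules out any such $j$. Now Lemma \ref{lem:equality1} gives that $\mu$ (hence $\nu$) has only finitely many edges outside $H^{1}_{\xi}$. If there are none, all edges lie in $H^{1}_{\xi}$ and we are in case $(1)^{*}$ with $n=-1$; otherwise let $p$ be the least (most negative) index with $\mu_{p}\notin H^{1}_{\xi}$, so the claim shows every discrepancy occurs at an index $<p$ and that all indices $<p$ lie in $H^{1}_{\xi}$, placing us in case $(2)^{*}$ with $n+1=p$ and $\mu,\nu$ agreeing on $[p,-1]$. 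In either case, restricting the identity $g_{l}\cdot\mu_{l}\cdots\mu_{-1}=\nu_{l}\cdots\nu_{-1}$ to the image block $[l,n]$ and lifting the cyclic labels $m_{l}$ of $g_{l}$ to integers yields $a^{m_{l}}\cdot i_{l}\cdots i_{n}=i'_{l}\cdots i'_{n}$ for all $l\le n$, i.e.\ $(\dots i_{n-1}i_{n})\sim_{ae}(\dots i'_{n-1}i'_{n})$ for the $2$-odometer.

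For the reverse direction the case $\mu=\nu$ is witnessed by the units, so assume the structure of $(1)^{*}$ or $(2)^{*}$. The odometer equivalence provides a finite $F'\subseteq\ZZ$ and $(m_{k})_{k\le n}\subseteq F'$ with $a^{m_{k}}\cdot i_{k}\cdots i_{n}=i'_{k}\cdots i'_{n}$. Setting $g_{k}=(m_{k},r(\mu_{k}))$ for $k\le n$ and $g_{k}=(0,r(\mu_{k}))$ for $k>n$, I would compute $g_{k}\cdot\mu_{k}\cdots\mu_{-1}$ using \eqref{eq:Putnam_odometer}: on the image block $\mu_{k}\cdots\mu_{n}=\xi^{i_{k}\cdots i_{n}}(y_{k}\cdots y_{n})$ it produces $\xi^{i'_{k}\cdots i'_{n}}(y_{k}\cdots y_{n})=\nu_{k}\cdots\nu_{n}$, while the leftover restriction, upon meeting the non-image edge $\mu_{n+1}$ in case $(2)^{*}$, becomes the unit and hence fixes the agreeing tail $\mu_{n+1}\cdots\mu_{-1}=\nu_{n+1}\cdots\nu_{-1}$; in case $(1)^{*}$ there is no tail. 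As all the $g_{k}$ lie in the fixed finite set determined by $F'$ and $E^{0}$, this gives $\mu\sim_{ae}\nu$.

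The main obstacle is the structural claim in the forward direction: that a discrepancy at $k_{0}$ forces every earlier edge into $H^{1}_{\xi}$, so that all discrepancies are confined to a single block of image edges bounded on the right either by the end of the path (case $(1)^{*}$) or by the leftmost non-image edge (case $(2)^{*}$). This rests entirely on the ``reset to the unit'' behaviour of restrictions at edges outside $H^{1}_{\xi}$ together with the $l\to-\infty$ limit. A secondary, routine technical point, handled because $F$ is finite, is passing between the cyclic labels $m\in\ZZ/2^{\ell(v)}\ZZ$ appearing in $G_{\xi}$ and honest integer exponents of the $2$-odometer; this is justified since the odometer action on a word of length $L$ depends only on the exponent modulo $2^{L}$.
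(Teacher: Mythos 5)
Your proposal is correct and follows essentially the same route as the paper's proof: both directions reduce to the $2$-odometer via \eqref{eq:Putnam_odometer}, using the fact that restrictions reset to units at edges outside $H^{1}_{\xi}$ (the paper's forward direction likewise invokes Lemma \ref{lem:equality1}, picks the maximal block of image edges, and transfers finite witnessing sets between $G_{\xi}$ and $\mathbb{Z}$ via the quotient $\pi(\tilde{F}\times H^{0}_{\xi})$, which is your cyclic-label lifting). Your explicit ``discrepancy at $k_{0}$ confines all earlier edges to $H^{1}_{\xi}$'' claim is just a slightly more detailed packaging of the paper's choice of the largest $n$ with $\mu_{k},\nu_{k}\in H^{1}_{\xi}$ for all $k\leq n$, so there is nothing substantively different to flag.
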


\begin{proof}
We prove the forward direction first. Suppose that $\mu\sim_{ae} \nu$ and $\mu\neq \nu$. From Lemma \ref{lem:equality1}, we know that there is $n<0$ such that for all $k \leq n$, there is $i_{k}, i'_{k} \in \{0,1\}$ and $y_{k}, y'_{k}\in H^{1}$ with $\mu_{k} = \xi^{i_{k}}(y_{k})$ and $\nu_{k} = \xi^{i'_{k}}(y'_{k})$. Let $n$ be the largest such $n$ for which this is true, and let $(g_{k})_{k<0}$ be a sequence contained in some finite set $F$ of $G_{\xi}$ satisfying $d(g_{k}) = r(\mu_{k})$ and $g_{k}\cdot \mu_{k}...\mu_{-1} = \nu_{k}...\mu_{-1}$ for all $k<0$. 

Let $\tilde{F}$ be a finite set in $\mathbb{Z}$ such that $(g_{k})_{k\leq n}\subseteq \pi(\tilde{F}\times H_{\xi}^{0})$, where $\pi:\tilde{G}_{\xi}\to G_{\xi}$ is the quotient map. Then, there is $(n_{k})_{k\leq n}\subseteq \tilde{F}$ such that 
\[
\xi^{a^{n_{k}}\cdot (i_{k}..i_{n})}(y_{k}..y_{n}) = g_{k}\cdot \mu_{k}...\mu_{n} = \nu_{k}...\nu_{n} = \xi^{i'_{k}...i'_{n}}(y'_{k}..y'_{n}).
\]
It follows that $y_{k} = y'_{k}$ for all $k\leq n$ and $(...i_{n-1}i_{n})\sim_{ae} (...i'_{n-1}i'_{n})$.

Now, if $n =-1$, then we are done, so assume $n<1$ and, without loss of generality, that $\mu_{n+1}\notin H^{1}_{\xi}$. Then $\nu_{n+1} = g_{n+1}\cdot \mu_{n+1} = \mu_{n+1}$ and $g_{n+1}|_{\mu_{n+1}} = (0, s(\mu_{n+1}))$, so that  $\nu_{n+1}...\nu_{-1} = g_{n+1}\cdot \mu_{n+1}..\mu_{-1} = \mu_{n+1}...\mu_{-1}$.

Now, we prove the reverse direction. Let $\tilde{F}$ be a finite set in $\mathbb{Z}$ and $(n_{k})_{k\leq n}\subseteq \tilde{F}$ a sequence such that $\alpha^{n_{k}}\cdot i_{k}...i_{n} = i'_{k}...i'_{n}$ for all $k\leq n$. It follows from the definition of the action of $G_{\xi}$ that if we let $g_{k} = \pi((n_{k}),s(\mu_{k}))$, then $g_{k}\cdot \mu_{k}...\mu_{n} = \nu_{k}...\nu_{n}$ for all $k\leq n$. If $n=-1$, then we are done, so assume $n < -1$. Since $\mu_{n+1} = \nu_{n+1}\notin H^{1}_{\xi}$, it and its extension to the path $\mu_{n+1}...\mu_{-1} = \nu_{n+1}...\nu_{-1}$ are fixed by any element in $G_{\xi}\cap d^{-1}(s(\mu_{n+1}))$. Then, 
\[
g_{k}\cdot \mu_{k}...\mu_{-1} = \nu_{k}...\nu_{n}(g|_{\mu_{n}}\cdot \mu_{n+1}..\mu_{-1}) = \nu_{k}...\nu_{n} \nu_{n+1}((0,s(\mu_{n+1}) \cdot \mu_{n+2}..\mu_{-1}) = \nu_{k}...\nu_{-1}.
\]
So, if we let $g_{j} = (0, s(\mu_{j}))$ for $j\geq n+1$, then $(g_{k})_{k<0}$ is a sequence contained in $\pi(\tilde{F}\times H^{0}_{\xi})\cup \{0\}\times (E^{0}\setminus H^{0}_{\xi})$ implementing $\mu \sim_{ae} \nu$.
\end{proof}

We will now prove the main theorem.
\begin{proof}[Proof of Theorem~\ref{thm:equality}]
Recall from \cite[Section~3.1.2]{Nekrashevych:Self-similar} that two sequences $(...i_{n-1}i_{n})$ and $(...i'_{n-1}i'_{n})$ are asymptotically equivalent relative to the 2-odometer action of $\mathbb{Z}$ if and only if either $i_{k} = i'_{k}$ for all $k\leq n$, or there is $n'\leq n$ and $i\in\{0,1\}$ such that $i_{k} = i$ and $i'_{k} = 1-i$ for all $k\leq n'$, and one of the following hold:
\begin{enumerate}
\item[(1)**] $n = n'$, or
\item[(2)**] $n' < n$, $x_{k} = x'_{k}$ for all $n
\geq k > n'+1$, $i_{n'+1} = 1-i$ and $i'_{n'+1} = i$.
\end{enumerate}
Combining this description of asymptotic equivalence for the 2-odometer action with the description of asymptotic equivalence for $(G_{\xi}, E)$ in Lemma \ref{lem:equality2} yields equality with $\sim_{\xi}$. For clarity, cases $(1)$, $(2)$ and $(3)$ of $\sim_{\xi}$ correspond, respectively, to cases $(1)^{*} + (1)^{**}$ ($n'= n = -1$), $(2)^{**}$ ($n' < n \leq -1$) , and $(2)^{*} + (1)^{**}$ ($n' = n < -1$) of Lemma \ref{lem:equality2} and above.
\end{proof}

\section{Out-splittings and KEP-action models for embedding pairs}\label{out-splits}

In this section, we determine the relationship of $(G_{\xi}, E)$ with a KEP-action model. In particular, we show there are matrices $A\in M_{N}(\{0,1,2\})$ and $B\in M_{N}(\{0,1\})$ such that $(\tilde{\sigma}, \Jj_{G_{\xi}, E})$ is topologically conjugate to $(\tilde{\sigma},\Jj_{G_B,E_A})$. We will do so by showing \textit{out-splits} of self-similar group bundles preserve limit spaces, and that a certain KEP-action arises as an out-split of $(G_{\xi},E)$.

\subsection{Out-splits of self-similar group bundles}
We take the approach on out-splits found in \cite{BMR}. For the ``classical'' approach, see \cite{Kitchens}.  Let $E$ be a directed graph and let  $OS = (\pi, \beta)$ be a tuple, where $\pi: E^{1}\to E^{0}_{OS}$ and $\beta:E_{OS}^{0}\to E^{0}$ are maps such that $s = \beta\circ \pi$. The \textit{out-split of $E$ by $OS$} is the graph $E_{OS} = (E^{0}_{OS}, E^{1}_{OS}, r_{OS}, s_{OS})$, where $E_{OS}^{1} \coloneqq E_{OS}^{0}\tensor[_\beta]{\times}{_r}E^{1}$ and $r_{OS},s_{OS}:E^{1}_{OS}\to E_{OS}^{0}$ are defined for $(v,e)\in E_{OS}^{1}$ as $r_{OS}(v,e) = v$ and $s_{OS}(v,e) = \pi(e)$.

\begin{center}
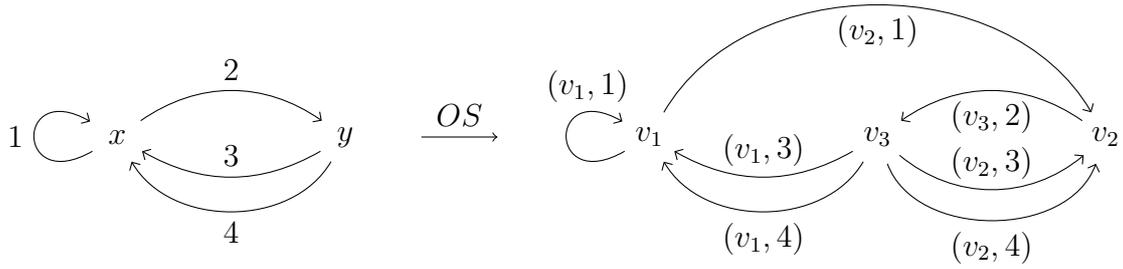
\begin{figure}
\begin{tikzpicture}[scale=1]
\node at (-1,0) {$y$};
\node at (-4,0) {$x$};
\node[vertex] (vertexe) at (-1,0)   {$\quad$};
\node[vertex] (vertex-a) at (-4,0)   {$\quad$}
	edge [->,>=Straight Barb,out=35,in=145] node[above,swap,pos=0.5]{$2$} (vertexe)
	edge [->,>=Straight Barb,out=210,in=150,loop] node[left,pos=0.5]{$1$} (vertex-a)
	edge [<-,>=Straight Barb,out=330,in=210] node[above,swap,pos=0.5]{$3$} (vertexe)
	edge [<-,>=Straight Barb,out=300,in=240] node[below,swap,pos=0.5]{$4$} (vertexe);
\draw [-to](0,0) -- (1,0) node[midway, above]{$OS$};
\node at (3,0) {$v_1$};
\node at (6,0) {$v_3$};
\node at (9,0) {$v_2$};
\node[vertex] (vertexer) at (6,0)   {$\quad$};
\node[vertex] (vertex-ar) at (3,0)   {$\quad$}
	%edge [->,>=Straight Barb,out=35,in=145] node[above,swap,pos=0.5]{$2$} (vertexer)
	edge [->,>=Straight Barb,out=210,in=150,loop] node[above,pos=0.8]{$(v_1,1)$} (vertex-ar)
	edge [<-,>=Straight Barb,out=330,in=210] node[above,swap,pos=0.5]{$(v_1,3)$} (vertexer)
	edge [<-,>=Straight Barb,out=300,in=240] node[below,swap,pos=0.5]{$(v_1,4)$} (vertexer);
\node[vertex] (vertex-br) at (9,0)   {$\quad$}
	edge [->,>=Straight Barb,out=145,in=35] node[below,swap,pos=0.5]{$(v_3,2)$} (vertexer)
	edge [<-,>=Straight Barb,out=120,in=60] node[below,pos=0.5]{$(v_2,1)$} (vertex-ar)
	edge [<-,>=Straight Barb,out=220,in=320] node[above,swap,pos=0.5]{$(v_2,3)$} (vertexer)
	edge [<-,>=Straight Barb,out=250,in=290] node[below,swap,pos=0.5]{$(v_2,4)$} (vertexer);
\end{tikzpicture}
\caption{An out-split of the graph on the left appears on the right.}
\label{out-split ex}
\end{figure}
\end{center}

\begin{exm}\label{example3}
Let $E$ be the graph on the left of Figure \ref{out-split ex}. Let $E_{OS}^0 \coloneqq \{v_1,v_2,v_3\}$ and $\pi: E^{1}\to E^{0}_{OS}$ be defined by 
\[
\pi(1)=v_1,\,\, \pi(2)=v_2,\,\, \pi(3)=v_3, \text{ and } \pi(4)=v_3.
\]
Since $s = \beta\circ \pi$, the map $\beta:E_{OS}^{0}\to E^{0}$ is given by
\[
\beta(v_1)= \beta(\pi(1))= s(1)=x , \,\, \beta(v_2)=x, \text{ and } \beta(v_3)=y. 
\]
Then,
\[
E_{OS}^{1} \coloneqq E_{OS}^{0}\tensor[_\beta]{\times}{_r}E^{1}=\{(v_1,1), (v_1,3), (v_1,4), (v_2,1), (v_2,3), (v_2,4), (v_3,2)\},
\]
with the out-split graph $E_{OS}$ depicted on the right of Figure \ref{out-split ex}. \qed
\end{exm}
It's routine to check that the dynamical systems $(\sigma, E^{-\infty})$ and $(\sigma, E_{OS}^{-\infty})$ are topologically conjugate via the map $I: E^{-\infty} \to E_{OS}^{-\infty}$ given by 
\[
I(...,e_{-2}, e_{-1})= (...,(\pi(e_{-3}), e_{-2}), (\pi(e_{-2}), e_{-1})).
\]
More generally, for every $n\in\mathbb{N}$, there is a bijection  $I_{n}: E_{OS}^{0}\tensor[_\beta]{\times}{_r}E^{n}\to E_{OS}^{n}$ defined for $(v, \mu)\in E_{OS}^{0}\tensor[_\beta]{\times}{_r}E^{n}$ with $\mu =e_{-n} \ldots e_{-1}$ by 
\[
I_{n}(v,\mu) = (v,e_{-n})(\pi(e_{-n}), e_{-n+1}) \ldots (\pi(e_{-2}), e_{-1}).
\]

Now suppose $(G,E)$ is a self-similar group bundle; that is, the action of $G$ on $E^*$ is range preserving. We can define a new group bundle $(G_{OS},E_{OS})$ with 
\[
G_{OS} = \{(g,v)\in G \times E_{OS}^{0} \mid d(g)=c(g)=\beta(v)\},
\]
where $(g,v)$ and $(g, v')$ are composable if and only if $v = v'$, in which case 
\[
(g,v)\cdot (g',v) = (gg', v).
\]
The action-restriction pair $(G_{OS},E_{OS})$ is defined for $(g,v)\in G_{OS}$ and $(v, e)\in E_{OS}^{1}$, as
\begin{equation}\label{out-split group}
(g,v) \cdot (v, e) = (v,g\cdot e) \quad \text{ and } \quad (g, v)|_{(v, e)} = (\pi(e), g|_{e}).
\end{equation}
The induced action of $(g,v)$ on a path $I_{n}(v,e)$, for $(v,e)\in E_{OS}^{0}\tensor[_\beta]{\times}{_r}E^{n}$, is 
\begin{equation}\label{out-split action}
(g,v)\cdot I_{n}(v,e) = I_{n}(v, g\cdot e),
\end{equation}
making it clear that the homomorphism $\phi:G_{OS}\to\PI(E^{*}_{OS})$ is faithful. Therefore, $(G_{OS}, E_{OS})$ is a self-similar groupoid action on a graph. We call $(G_{OS}, E_{OS})$ the \textit{out-split of $(G,E)$ by $OS$}.

\begin{thm}
    \label{thm:out-split}
    Let $(G,E)$ be a self-similar group bundle and $E_{OS}$ an out-split of $E$. Then, for $\mu,\nu\in E^{-\infty}$, $\mu$ is asymptotically equivalent to $\nu$ relative to $(G,E)$ if and only if $I(\mu)$ is asymptotically equivalent to $I(\nu)$ relative to the out-split $(G_{OS}, E_{OS})$. Consequently, $(\tilde{\sigma}, \Jj_{G,E})$ is topologically conjugate to $(\tilde{\sigma}, \Jj_{G_{OS},E_{OS}})$. 
\end{thm}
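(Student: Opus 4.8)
The plan is to prove the asymptotic-equivalence equivalence first, and then deduce the conjugacy formally from the already-established fact that $I\colon E^{-\infty}\to E_{OS}^{-\infty}$ is a homeomorphism intertwining the shifts $\sigma$. Throughout I will use the induced-action formula $(g,v)\cdot I_n(v,e)=I_n(v,g\cdot e)$ and the bijectivity of each $I_n\colon E^0_{OS}\tensor[_\beta]{\times}{_r}E^n\to E^n_{OS}$. The one structural observation I will lean on is that $\pi$ is constant on $G$-orbits of edges, i.e. $\pi(g\cdot e)=\pi(e)$ whenever $g\in G_{r(e)}$ and $e\in E^1$: because $(G_{OS},E_{OS})$ is a genuine self-similar groupoid action, the word $(g\cdot e)(g|_e\cdot\nu)$ is a legal path in $E_{OS}$, and comparing the source label of $(g,v)\cdot(v,e)$ with the range label of $(g,v)|_{(v,e)}$ forces $\pi(g\cdot e)=\pi(e)$. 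The consequence I will exploit is: if $\mu\sim_{ae}\nu$ is implemented by $(g_n)$, then reading off first edges gives $\nu_k=g_k\cdot\mu_k$, whence $\pi(\mu_k)=\pi(\nu_k)$ for all $k<0$.

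For the forward direction, let $\mu\sim_{ae}\nu$ be implemented by $(g_n)_{n<0}$ in a finite set $F\subseteq G$ with $d(g_n)=r(\mu_n)$ and $g_n\cdot\mu_n\cdots\mu_{-1}=\nu_n\cdots\nu_{-1}$. I set $\tilde g_n\coloneqq(g_n,\pi(\mu_{n-1}))$; since $\beta(\pi(\mu_{n-1}))=s(\mu_{n-1})=r(\mu_n)=d(g_n)=c(g_n)$, these lie in $G_{OS}$, they range over the finite set $\{(g,w)\mid g\in F,\ w\in E^0_{OS}\}\cap G_{OS}$, and $d(\tilde g_n)=\pi(\mu_{n-1})=r_{OS}(I(\mu)_n)$. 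Writing $I(\mu)_n\cdots I(\mu)_{-1}=I_{-n}(\pi(\mu_{n-1}),\mu_n\cdots\mu_{-1})$, the induced-action formula gives
\[
\tilde g_n\cdot I(\mu)_n\cdots I(\mu)_{-1}=I_{-n}\big(\pi(\mu_{n-1}),\,g_n\cdot\mu_n\cdots\mu_{-1}\big)=I_{-n}\big(\pi(\mu_{n-1}),\,\nu_n\cdots\nu_{-1}\big).
\]
Since $\pi(\mu_{n-1})=\pi(\nu_{n-1})$ by the observation above, the right-hand side equals $I_{-n}(\pi(\nu_{n-1}),\nu_n\cdots\nu_{-1})=I(\nu)_n\cdots I(\nu)_{-1}$, so $(\tilde g_n)$ implements $I(\mu)\sim_{ae}I(\nu)$.

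For the reverse direction, suppose $I(\mu)\sim_{ae}I(\nu)$ via $(\tilde g_n)_{n<0}$ inside a finite subset of $G_{OS}$. Each $\tilde g_n=(g_n,w_n)$ must satisfy $w_n=d(\tilde g_n)=r_{OS}(I(\mu)_n)=\pi(\mu_{n-1})$ and $g_n\in G_{\beta(w_n)}=G_{r(\mu_n)}$. Applying the induced-action formula and then injectivity of $I_{-n}$, the identity $\tilde g_n\cdot I(\mu)_n\cdots I(\mu)_{-1}=I(\nu)_n\cdots I(\nu)_{-1}$ unpacks (via $I_{-n}(\pi(\mu_{n-1}),g_n\cdot\mu_n\cdots\mu_{-1})=I_{-n}(\pi(\nu_{n-1}),\nu_n\cdots\nu_{-1})$) into $g_n\cdot\mu_n\cdots\mu_{-1}=\nu_n\cdots\nu_{-1}$, with $d(g_n)=r(\mu_n)$. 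The $g_n$ lie in the finite image of the chosen finite subset of $G_{OS}$ under the projection $(g,w)\mapsto g$, so $(g_n)$ implements $\mu\sim_{ae}\nu$.

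Finally, the homeomorphism $I$ carries $\sim_{ae}$-classes bijectively onto $\sim_{ae}$-classes by the equivalence just proved, hence descends to a bijection $\bar I\colon\Jj_{G,E}\to\Jj_{G_{OS},E_{OS}}$. Writing $q,q'$ for the quotient maps, the relation $q'\circ I=\bar I\circ q$ together with $q$ being a quotient map shows $\bar I$ is continuous, and symmetrically $q\circ I^{-1}=\bar I^{-1}\circ q'$ shows $\bar I^{-1}$ is continuous, so $\bar I$ is a homeomorphism; since $I\circ\sigma=\sigma\circ I$ it intertwines $\tilde\sigma$ with $\tilde\sigma$, giving the topological conjugacy. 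I expect the only delicate point to be the range-vertex bookkeeping in $E_{OS}$, where edges carry a $\pi$-label: the identity $\pi(\mu_k)=\pi(\nu_k)$ is exactly what makes the leading edge of each truncation match, and it is the place where the group-bundle/orbit-invariance hypothesis is genuinely used.
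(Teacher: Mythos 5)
Your proof is correct and takes essentially the same route as the paper's: the paper's (much more condensed) proof is precisely your chain of equivalences, transporting an implementing sequence $(g_n)$ through the bijections $I_{-n}$ via the induced-action formula \eqref{out-split action} and replacing a finite set $F\subseteq G$ by $F_{OS}=\{(g,v)\in F\times E_{OS}^{0}\mid d(g)=\beta(v)\}$, with the conjugacy then descending through the quotient maps exactly as you describe. The one point you add is the explicit orbit-invariance $\pi(g\cdot e)=\pi(e)$ (whence $\pi(\mu_{n-1})=\pi(\nu_{n-1})$), which the paper leaves implicit even though its formula $(g,v)\cdot I_{n}(v,e)=I_{n}(v,g\cdot e)$ presupposes it; making that bookkeeping visible is a genuine clarification rather than a deviation.
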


\begin{proof}
If $F\subseteq G$ is a finite set, we let $F_{OS} = \{(g,v)\in F\times E_{OS}^{0}: d(g) = \beta(v)\}$. Then, $\mu$ is asymptotically equivalent to $\nu$ if and only if there is a sequence $(g_{n})_{n<0}$ contained in some finite set $F$ of $G$, such that $d(g_{n}) = r(\mu_{n})$ and $g_{n}\cdot \mu_{n}....\mu_{-1} = \nu_{n}...\nu_{1}$ for all $n<0$, if and only if there is a sequence $(g_{n})_{n<0}\subseteq G$ and a finite set $F\subseteq G$ such that $((g_{n},\pi(\mu_{n-1})))_{n<0}$ is contained in $F_{OS}$ and satisfies $(g_{n},\pi(\mu_{n-1}))\cdot I_{-n}(\pi(\mu_{n-1}), \mu_{n}...\mu_{-1}) = I_{-n}(\pi(\nu_{n-1}),\nu_{n}...\nu_{-1})$ for all $n<0$, if and only if $I(\mu)$ is asymptotically equivalent to $I(\nu)$.
\end{proof}
\begin{rmk}
    A number of properties are preserved by out-splitting self-similar group bundles. For instance, using equations \eqref{out-split group} and \eqref{out-split action}, it is easy to see that $(G, E)$ is contracting (or regular) if and only if $(G_{OS}, E_{OS})$ is contracting (or regular).
\end{rmk}

\subsection{KEP-action models for embedding pairs}\label{Katsura models Putnam}

Suppose $\xi = \xi^{0},\xi^{1}:H\to E$ is an embedding pair and $(G_{\xi}, E)$ its associated self-similar groupoid action. We show there is a Katsura pair $(A,B)$ such that $(G_{A},E_{B})$ is the out-split of $(G_{\xi}, E)$.

Let $E_{OS}^{0}$ be the set obtained from $E^{1}$ by identifying the edges $\xi^{0}(h)$ and $\xi^{1}(h)$, for all $h\in H^{1}$. Let $\pi:E^{1}\to E_{OS}^{0}$ be the quotient map. Since the edges being identified share the same source, there is a unique map $\beta:E_{OS}^{0}\to E^0$ satisfying $\beta \circ \pi = s$. Therefore, the tuple $OS = (\pi, \beta)$ determines an out-split $(G_{OS}, E_{OS})$ of the self-similar group bundle $(G_{\xi}, E)$. We show $(G_{OS},E_{OS})$ is isomorphic to a KEP-action. 

If $w\in \pi(H_{\xi}^{1})$, there is a unique $h\in H^{1}$ such that $\pi^{-1}(w) = \{\xi^{0}(h),\xi^{1}(h)\}$. Otherwise,
$\pi^{-1}(w) = \{w\}\subseteq E^{1}\setminus H_{\xi}^{1}$.

Therefore, for $v,w\in E_{OS}^0$,  if $A_{v,w}\coloneqq |v(E_{OS}^{1})w|> 0$, we have 
\begin{align}
v(E_{OS}^{1})w &=
\begin{cases}
\{(v, \xi^{0}(h)), (v,\xi^{1}(h))\} \text{ for some }h\in H^{1} \text{ if } w\in\pi(H_{\xi}^{1}),\\\\
\{(v, w)\} \text{ otherwise.} 
\end{cases}
\end{align}
In the first case, we shall denote $e_{v,w, m}\coloneqq (v, \xi^{m}(h))$ for $m\in\{0,1\}$ and in the second case, denote $e_{v,w,0} \coloneqq (v,w)$. Replacing the notation $(k,v)\in G_{OS}$ with $a_{v}^{k}$, we see that when $A_{v,w}\neq 0$, the action and restriction  $(G_{OS})_{v}\times v(E_{OS}^{1})w\to v(E_{OS}^{1})w\times (G_{OS})_{w}$ is given by $(a_{v}^{k}, e_{v,w,m})\to (e_{v,w, \hat{m}}, a_{w}^{\hat{k}})$, where
\begin{equation}\label{Putnam_out-split}
k(A_{v,w}-1) + m = \hat{k}(A_{v,w}) + \hat{m}, \text{ and } 0\leq m,\hat{m} < A_{v,w} - 1.
\end{equation}
Comparing \eqref{Putnam_out-split}  with \eqref{ExelPardoKatsura_action} and \eqref{eq:Katsura_action-restriction} ,  we see that $(G_{OS}, E_{OS})$ is canonically isomorphic to $(G_{B}, E_{A})$, where $B =  (\max\{0, A_{v,w} - 1\})_{v,w\in E_{OS}^{0}}$. We record this formally as a corollary to Theorem \ref{thm:out-split}.

\begin{cor}
Let $\xi = \xi^{0},\xi^{1}:H\to E$ be an embedding pair and let $(G_{\xi},E)$ be its associated self-similar groupoid action, as described in Section \ref{Putnam systems}. Then, the out-split $(G_{OS},E_{OS})$ of $(G_{\xi}, E)$, described in Section \ref{Katsura models Putnam} is canonically isomorphic to the KEP-action $(G_{B}, E_{A})$ with $A = (A_{v,w})_{v,w \in E_{OS}^{0}}$ the adjacency matrix of $E_{OS}$ and $B =  (\max\{0, A_{v,w} - 1\})_{v,w\in E_{OS}^{0}}$. Moreover, the limit spaces $(\tilde{\sigma},\Jj_{G_{\xi}, E})$ and $(\tilde{\sigma}, \Jj_{G_B,E_A})$ are topologically conjugate.
\end{cor}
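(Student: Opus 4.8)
The plan is to package the explicit computation carried out just above the statement into an honest isomorphism of self-similar groupoid actions, and then to feed that isomorphism into Theorem~\ref{thm:out-split}. The claim has two halves: first, that the out-split $(G_{OS}, E_{OS})$ coincides with the faithful KEP-action $(G_B, E_A)$ for the stated matrices; and second, that the two limit space dynamical systems are topologically conjugate. The second half is a formal consequence of the first once Theorem~\ref{thm:out-split} is in hand, so the real work is concentrated in the first half.

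To apply Theorem~\ref{thm:out-split} at all, I would first record that $(G_\xi, E)$ is a self-similar group bundle. Because $\xi^0$ and $\xi^1$ agree on $H^0$, for $e = \xi^i(h)$ we have $r((m,v)\cdot e) = r(\xi^j(h)) = \xi^j(r(h)) = r(e)$, while edges outside $H_\xi^1$ are fixed; hence the $G_\xi$-action is range-preserving, as required. Next I would verify that $(A,B)$ with $A_{v,w} = |vE_{OS}^1 w|$ and $B_{v,w} = \max\{0, A_{v,w}-1\}$ is a genuine Katsura pair: each set $vE_{OS}^1 w$ has at most two elements (two exactly when $w \in \pi(H_\xi^1)$ and $\beta(v)$ is the common range of the relevant pair $\xi^0(h), \xi^1(h)$), so $A \in M_N(\{0,1,2\})$ and $B \in M_N(\{0,1\})$, and $A_{v,w} = 0$ forces $B_{v,w} = 0$.

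The heart of the argument is then to promote the coincidence of formulas \eqref{Putnam_out-split} and \eqref{ExelPardoKatsura_action}--\eqref{eq:Katsura_action-restriction} into an isomorphism. I would phrase both $(G_{OS}, E_{OS})$ and $(G_B, E_A)$ as faithful self-similar groupoids induced by one and the same action-restriction pair on the group bundle $\mathbb{Z}\times E_{OS}^0$: the out-split structure \eqref{out-split group} transports the $G_\xi$-data to exactly the rule $(a_v^k, e_{v,w,m}) \mapsto (e_{v,w,\hat m}, a_w^{\hat k})$ of \eqref{Putnam_out-split}, and substituting $B_{v,w} = A_{v,w}-1$ (on the indices where an edge exists) turns \eqref{Putnam_out-split} into the KEP rule \eqref{ExelPardoKatsura_action}--\eqref{eq:Katsura_action-restriction}. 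By the self-similar recursion \eqref{eq:actionrestriction} these pairs induce the \emph{same} homomorphism $\phi \colon \mathbb{Z}\times E_{OS}^0 \to \PI(E_{OS}^*)$, hence have the same kernel. Appealing to the one-to-one correspondence between faithful self-similar groupoids and action-restriction pairs recorded at the end of Section~\ref{subsec:ssa}, both $(G_{OS}, E_{OS})$ and $(G_B, E_A)$ are the faithful quotient by this common kernel, so the identity on units together with $a_v^k \mapsto a_v^k$ on fibres is a well-defined isomorphism intertwining the two actions.

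I expect the main obstacle to be precisely this well-definedness of the fibrewise map $a_v^k \mapsto a_v^k$, i.e. the matching of the cyclic orders of the two group bundles. The danger is that $G_{OS}$ is presented as an out-split of the already-faithful quotient $G_\xi$, whereas $G_B$ is presented as a faithful quotient of $\mathbb{Z}\times E_{OS}^0$, and a priori one must rule out the two kernels differing. The resolution above sidesteps any direct order computation by observing that both groupoids act faithfully via the \emph{identical} partial-isomorphism action $\phi$; uniqueness in the correspondence of Section~\ref{subsec:ssa} then forces the kernels, and hence the groupoids, to agree. Finally, for the conjugacy of limit spaces I would apply Theorem~\ref{thm:out-split} to the self-similar group bundle $(G_\xi, E)$ to obtain $(\tilde\sigma, \Jj_{G_\xi, E}) \cong (\tilde\sigma, \Jj_{G_{OS}, E_{OS}})$, and then transport along the isomorphism $(G_{OS}, E_{OS}) \cong (G_B, E_A)$ just established to conclude $(\tilde\sigma, \Jj_{G_\xi, E}) \cong (\tilde\sigma, \Jj_{G_B, E_A})$.
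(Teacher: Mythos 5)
Your proposal is correct and takes essentially the same route as the paper: the paper likewise obtains the corollary by comparing the out-split rule \eqref{Putnam_out-split} with the KEP rule \eqref{ExelPardoKatsura_action} and \eqref{eq:Katsura_action-restriction} to get the canonical isomorphism, and then records the conjugacy of limit spaces as an application of Theorem~\ref{thm:out-split}. Your intermediate step, realising $(G_{OS},E_{OS})$ and $(G_{B},E_{A})$ as faithful quotients of one and the same action-restriction pair on $\mathbb{Z}\times E_{OS}^{0}$ so that the two kernels are forced to agree, simply makes explicit the kernel-matching that the paper leaves implicit in the phrase ``canonically isomorphic.''
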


\begin{center}
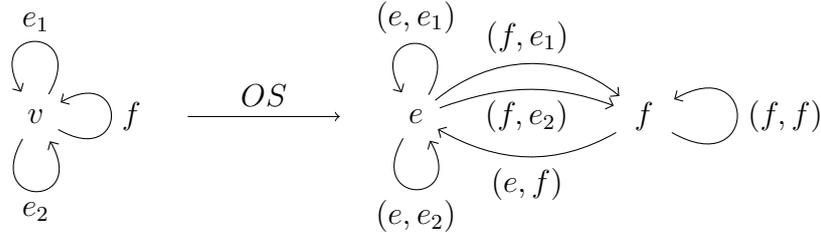
\begin{figure}
\begin{tikzpicture}[scale=1]
\node[vertex] (vertexyy) at (-2,0) {$v$}
	edge [->,>=Straight Barb,out=-30,in=30,loop] node[right]{$f$} (vertexyy)
	edge [->,>=Straight Barb,out=240,in=300,loop] node[below]{$e_1$} (vertexyy)
	edge [->,>=Straight Barb,out=60,in=120,loop] node[above]{$e_0$} (vertexyy);
\draw [-to](0,0) -- (2,0) node[midway, above]{$OS$};
\node[vertex] (vertexer) at (6,0)   {$f$}
	edge [->,>=Straight Barb,out=-30,in=30,loop] node[right]{$(f,f)$} (vertexer);
\node[vertex] (vertex-ar) at (3,0)   {$e$}
	edge [->,>=Straight Barb,out=240,in=300,loop] node[below]{$(e,e_1)$} (vertex-ar)
	edge [->,>=Straight Barb,out=60,in=120,loop] node[above]{$(e,e_0)$} (vertex-ar)
	edge [->,>=Straight Barb,out=20,in=160] node[below,swap,pos=0.5]{$(f,e_1)$} (vertexer)
	edge [->,>=Straight Barb,out=40,in=140] node[above,swap,pos=0.5]{$(f,e_0)$} (vertexer)
	edge [<-,>=Straight Barb,out=330,in=210] node[below,swap,pos=0.5]{$(e,f)$} (vertexer);
\end{tikzpicture}
\caption{The out-split associated with Examples \ref{Putnam emb ex} and \ref{out-split emb ex}.}
\label{out-split groupoid ex}
\end{figure}
\end{center}

\begin{exm}\label{out-split emb ex}
Consider again Example \ref{Putnam emb ex}. We have $E^{0}_{OS} = \{e,f\}$ and the quotient map $\pi:E^{1}\to E^{0}_{OS}$ satisfies 
\[
\pi(e_{0})=\pi(e_{1}) = e, \, \pi(f)=f.
\]
and 
\[
E_{OS}^1=\{(f,f), (e,f), (e,e_{i}),(f,e_{j}) \mid i,j\in\{0,1\}\}.
\]
The graph $E$ and its out-split $OS$ is recorded in Figure \ref{out-split groupoid ex}. If we order $e< f$, then 
the KEP-action is defined by the matrices
\[
A = \left( \begin{matrix}
2 & 1 \\
2 & 1 \\
\end{matrix} \right) \qquad \text{ and } \qquad 
B = \left( \begin{matrix}
1 & 0\\
1 & 0 \\
\end{matrix} \right). \qed
\]
\end{exm}

\section{KEP-systems as bundles of odometers}\label{Bundles of odometers}
We now provide a description of the KEP-systems $(\tilde{\sigma},\Jj_{G_B,E_A})$ as a bundle of dynamical systems that fibre over the shift space of the connectivity graph of $E_{A}$ when $B$ is a matrix taking values in $\{0,1\}$. Note that from the previous section, this class includes the dynamical systems arising from embedding pairs.

We will go even farther and first describe the connected component space of the limit space for an arbitrary finitely generated and contracting self-similar groupoid $(G, E)$. This result does not appear in the literature anywhere else.

Recall that for a topological space $X$, its \textit{connected component space} $\mathcal{C}(X)$ is the quotient of $X$ by the equivalence relation $\sim_{C}$, where $x\sim_{C}y$ if and only if $x$ and $y$ are in the same connected component.
\par
For a self-similar groupoid $(G, E)$ and $\mu, \nu \in E^{-\infty}$, we will say $\mu \sim_{e} \nu$ if and only if there is $(g_{n})_{n<0} \subseteq G$ such that $d(g_{n}) = r(\mu_{n})$ and $g_{n}\cdot \mu_{n}....\mu_{-1} = \nu_{n}...\nu_{-1}$ for all $n<0$. Note that this is the same as asymptotic equivalence, except we do not require the sequence of groupoid elements to lie in a finite set.

\begin{prp}
Let $(G, E)$ be a finitely generated and contracting self-similar groupoid. Then, $\mathcal{C}(\Jj_{G, E}) = E^{-\infty}/\sim_{e}$.
\end{prp}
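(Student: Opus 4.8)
The plan is to factor the problem through the evident comparison map between the two quotients and then to identify the connected components of $\Jj_{G,E}$ with its fibres. Since $\sim_{ae}$ is contained in $\sim_e$, the quotient map $E^{-\infty}\to E^{-\infty}/\sim_e$ descends to a continuous surjection $p\colon \Jj_{G,E}\to E^{-\infty}/\sim_e$. I would prove the proposition by establishing three facts: (I) the space $E^{-\infty}/\sim_e$ is totally disconnected; (II) every fibre $p^{-1}([\mu])$ is connected; and (III) these combine, via compactness, to give the homeomorphism. Granting (I), the image of any connected subset of $\Jj_{G,E}$ is a single point, so every connected component lies in a fibre; granting (II), every fibre is itself connected, so components and fibres coincide. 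As $\Jj_{G,E}$ is compact (a quotient of the compact space $E^{-\infty}$) and $E^{-\infty}/\sim_e$ is Hausdorff (being totally separated, by the proof of (I)), the induced continuous bijection $\Cc(\Jj_{G,E})=\Jj_{G,E}/\sim_{C}\to E^{-\infty}/\sim_e$ is a homeomorphism.

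For (I), given $\mu\in E^{-\infty}$ and $k\ge 1$ let $O_k(\mu)\subseteq E^{k}$ be the $G$-orbit of the finite path $\mu[-k,-1]$ and set $U_k(\mu)\coloneqq\bigcup_{\beta\in O_k(\mu)}\lZ{\beta}$. Because $E$ is finite, $O_k(\mu)$ is finite, so $U_k(\mu)$ is clopen. The definition of $\sim_e$ shows that membership in $U_k(\mu)$ depends only on the $\sim_e$-class, so each $U_k(\mu)$ is $\sim_e$-saturated and its image in $E^{-\infty}/\sim_e$ is clopen. If $\mu\not\sim_e\nu$ then by definition there is some $k$ with $\nu[-k,-1]\notin O_k(\mu)$, so the image of $U_k(\mu)$ is a clopen set separating $[\mu]$ from $[\nu]$. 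Hence $E^{-\infty}/\sim_e$ is totally separated, and in particular totally disconnected and Hausdorff. Note that this step uses only finiteness of $E$.

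For (II), write $q\colon E^{-\infty}\to\Jj_{G,E}$ and $V_k(\mu)\coloneqq q(U_k(\mu))$. Since $U_k(\mu)$ is also $\sim_{ae}$-saturated, $V_k(\mu)$ is clopen, hence compact, and $q^{-1}(V_k(\mu))=U_k(\mu)$. Applying a restriction to the leading edge shows that $O_{k+1}(\mu)$ truncates into $O_k(\mu)$, so the sets $U_k(\mu)$, and therefore the $V_k(\mu)$, are nested decreasing; moreover $\bigcap_k U_k(\mu)=\{\eta: \eta\sim_e\mu\}$, whence $\bigcap_k V_k(\mu)=p^{-1}([\mu])$. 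A nested intersection of connected compact subsets of a compact Hausdorff space is connected, so it suffices to prove that each $V_k(\mu)$ is connected.

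The connectedness of $V_k(\mu)$ is the heart of the argument and the point at which contraction and finite generation are essential. Here $V_k(\mu)$ is the union, over the $G$-orbit $O_k(\mu)$, of the \emph{tiles} $q(\lZ{\beta})$. Using the contracting hypothesis one has the standard description of tile adjacency by the nucleus: for $\alpha,\beta\in E^{k}$ the tiles $q(\lZ{\alpha})$ and $q(\lZ{\beta})$ meet precisely when some element of the nucleus $\Nn$ carries one onto the other (cf.\ \cite{BBGHSW,Nekrashevych:Self-similar}). I would then show that any single orbit relation $g\cdot\beta=\beta'$ with $g\in G$ can be realised by a finite chain of such nucleus adjacencies: decomposing $g$ over a finite generating set and passing to sufficiently deep restrictions, contraction forces all the relevant restrictions into the finite set $\Nn$, converting the orbit relation into a path of overlapping tiles inside $V_k(\mu)$. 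Consequently the tiles comprising $V_k(\mu)$ form a connected union, so $V_k(\mu)$ is connected, the fibres are connected by the nested-intersection argument, and (III) finishes the proof. The main obstacle is precisely this last combinatorial step---turning an arbitrary orbit relation into a chain of nucleus adjacencies---where the groupoid (rather than group) source/range bookkeeping and the uniform control supplied by contraction must be handled carefully.
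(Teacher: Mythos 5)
Your overall architecture (the comparison map $p$, total disconnectedness of $E^{-\infty}/\sim_{e}$, connectedness of fibres, and the compact-to-Hausdorff bijection) is sound, and your step (I) coincides with the paper's separation direction: your $U_k(\mu)$ are exactly the paper's saturated clopen orbit-cylinder unions $Z_n$. The genuine gap is in (II), at precisely the step you flag as the heart: the claim that each $V_k(\mu)=q(U_k(\mu))$ is connected is \emph{false} under the stated hypotheses, so the nested-intersection reduction collapses. Take $G=E^0$ the trivial groupoid acting trivially on a finite graph $E$ whose path space is a Cantor set: this action is finitely generated and contracting (nucleus $E^0$), $O_k(\mu)=\{\mu[-k,-1]\}$, $\Jj_{G,E}=E^{-\infty}$, and $V_k(\mu)=\lZ{\mu[-k,-1]}$ is totally disconnected, while the fibre $\bigcap_k V_k(\mu)=\{q(\mu)\}$ is a point. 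The same phenomenon occurs in every example of interest (e.g., the KEP-systems of Section 6, whose components are points and circles fibring over a Cantor set of vertex sequences): tiles $q(\lZ{\beta})$ are generally disconnected, so chain-adjacency of tiles can never yield topological connectedness of their union --- a union of two overlapping disconnected sets is still disconnected. There is also a secondary problem in your combinatorial step even taken on its own terms: contraction puts $f|_{\beta}$ into $\Nn$ only for \emph{long} $\beta$, whereas your chain must run along the fixed length-$k$ paths of $O_k(\mu)$, so the restrictions of generators along these paths need not lie in the nucleus; and the adjacency criterion ``tiles meet iff a nucleus element carries one onto the other'' requires extendability care in the ``if'' direction. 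But the decisive defect is the false connectedness of $V_k(\mu)$.

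The paper avoids this by never asserting connectedness at any finite level: it proves directly, by contradiction, that the image of the infinite intersection $q\bigl(\bigcap_{n}Z_n\bigr)$ (your fibre $p^{-1}([\mu])$) is connected. Assuming a relatively clopen partition $\Jj_0\sqcup\Jj_1$ of this set, it pulls back to disjoint compacta $X_0,X_1\subseteq E^{-\infty}$, uses an ultrametric to split the level-$n$ orbit as $P_{0,n}\sqcup P_{1,n}$ for $n\ll 0$, and then invokes finite generation exactly once: since the level-$n$ orbit is a single orbit of the finitely generated $G|_{V}$, some generator $f_n\in F$ carries some $p^{0,n}\in P_{0,n}$ into $P_{1,n}$. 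Extending these finite paths to infinite ones, extracting subsequential limits $x^0\in X_0$ and $x^1\in X_1$, and using contraction --- finiteness of $F'=\bigcup_{n\in\NN}F|_{E^{n}}$ --- to control \emph{all} restrictions of the single crossing generators $f_{n_k}$, it concludes $x^0\sim_{ae}x^1$, whence $q(x^0)=q(x^1)\in\Jj_0\cap\Jj_1$, a contradiction. So the crossing-generator idea in your last paragraph is the right germ, but it must be run at the infinite level, where overlap of tiles is replaced by actual identification of limit points in $\Jj_{G,E}$; your finite-level reduction cannot be repaired as stated.
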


\begin{proof}
Let $q:E^{-\infty} \to \Jj_{G, E}$ be the quotient map. We show $q(\mu)\sim_{C}q(\nu)$ if and only if $\mu \sim_{e} \nu$.
\par
First, suppose $\mu, \nu \in E^{-\infty}$ are such that $\mu \nsim_{e} \nu$. Let $n<0 $ be such that $g\cdot \mu_{n}...\mu_{-1}\neq \nu_{n}...\nu_{-1}$ for all $g\in G$ such that $d(g) = r(\mu_{n})$. Then, the set 
\[
Z = \bigcup_{\{g\in G \mid d(g) = r(\mu_{n})\}}Z(g\cdot \mu_{n}..\mu_{-1}]
\] is clopen and does not contain $\nu$, which is also saturated with respect to the asymptotic equivalence relation. Therefore, $q(Z)\subseteq \Jj_{G, E}$ is a clopen set such that $q(\mu)\in q(Z)$ and $q(\nu)\notin q(Z)$. Hence, $q(\mu)\nsim_{C} q(\nu)$.

Suppose now $\mu,\nu \in E^{-\infty}$ are such that $\mu \sim_{e} \nu$. Let $V = s(E^{-\infty})$.
For $n<0$, let 
\[
Z_{n} = \bigcup_{\{g\in G \mid d(g) = r(x_{n}), \, c(g)\in V\}}Z(g\cdot \mu_{n}...\mu_{-1}].
\]
Then, $Z_{n-1}\subseteq Z_{n}$ and $\mu,\nu \in Z_{n}$ for all $n<0$. Denote $Z_{-\infty} = \bigcap_{n<0}Z_{n}$. We show $\Jj_{-\infty} \coloneqq q(Z_{-\infty})$ is connected.

Suppose $\Jj_{-\infty} = \Jj_{0}\cup\Jj_{1}$, where $\Jj_{0}$, $\Jj_{1}$ are non-empty, pairwise disjoint and clopen in the relative topology induced from $\Jj_{-\infty}$. Since $Z_{-\infty}$ is closed and saturated with respect to the asymptotic equivalence relation, $\Jj_{0}$ and $\Jj_{1}$ are also closed in $\Jj_{G, E}$. Let $X_{0} = q^{-1}(\Jj_{0})$ and $X_{1} = q^{-1}(\Jj_{1}).$ Let $d$ be an ultrametric metric on $E^{-\infty}$. Since $X_{0}$ and $X_{1}$ are disjoint compact sets and $Z_{-\infty} = X_{0}\cup X_{1}$, there is $N<0$ such that for all $n\leq N$, $VG \cdot \mu_{n}...\mu_{-1} = P_{0,n}\cup P_{1,n}$, where $P_{0,n}\cap P_{1,n} = \emptyset$ and $X_{0}\subseteq Z_{0,n} \coloneqq \bigcup_{y\in P_{0,n}}Z(y]$,  $X_{1}\subseteq Z_{1,n}\coloneqq \bigcup_{y\in P_{1,n}}Z(y]$. In particular, if $d(X_{0}, X_{1}) =\inf\{d(x_{0}, x_{1}) \mid x_{i}\in X_{i}\} = \alpha$ and $N$ satisfies $\text{sup}_{p\in E^{-N}}\text{diam}(Z(p]) < \alpha$, then for $n\leq N$, let 
\begin{align*}
P_{0,n} &= \{p\in VG \cdot \mu_{n}...\mu_{-1} \mid d(X_{0}, Z(p]) <\alpha\}.
\end{align*}
Since $d$ is an ultrametric, we have $Z(p]\cap X_{1}=\emptyset$ for all $p\in P_{0,n}$. Therefore, we may set $P_{1,n} = VG\cdot \mu_{n}...\mu_{-1}\setminus P_{0,n}$.
Since $G$ is finitely generated, so is the groupoid $G|_{V} = \{g\in G \mid d(g),c(g)\in V\}$. Let $F$ be a finite generating set for $G|_{V}$; i.e., $\bigcup_{n\in\mathbb{N}} F^{n} = G|_{V}$. For each $n<0$, choose $p^{0,n}\in P_{0,n}$ such that there is $f_{n}\in F$ with $p^{1,n} \coloneqq f_{n}\cdot p^{0,n}\in P_{1,n}$.

Since $r(p^{0,n}), r(p^{1,n})\in V$, there are infinite paths $x^{0,n},x^{1,n}\in E^{-\infty}$ such that $x^{0,n}\in Z(p^{0,n}]$ and $x^{1,n}\in Z(p^{1,n}]$ for all $n<N$. Let $(n_{k})_{k<0}$ be a decreasing sequence such that $(x^{0,n_{k}})_{k<0}$ and $(x^{1,n_{k}})_{k<0}$ converge to $x^{0}$ and $x^{1}$, respectively. Since $x^{0,n_{k}}, x^{1, n_{k}}\in Z_{n_{k}}$ for all $k<0$, we have $x^{0}, x^{1}\in Z_{-\infty}$. Let us show $x^{0}\in X_{0}$ and $x^{1}\in X_{1}$.

We have $d(y, x^{0,n_{k}})\geq \alpha$ for all $k<0$ and $y\in X_{1}$; for if not, then there is $K<0$, $y_{1}\in X_{1}$ and $y_{0}\in X_{0}$ such that $d(y_{0}, y_{1})\leq\max\{d(y_{0}, x^{0,n_{K}}), d(y_{1}, x^{0,n_{K}})\} < \alpha$, contradicting that $d(X_{0}, X_{1}) = \alpha$. Hence, we have $d(X_{1}, x^{0})\geq \alpha$. It follows that $x^{0}\in Z_{-\infty}\setminus X_{1} = X_{0}.$ By definition, $d(X_{0}, x^{1,n_{k}})\geq \alpha$ for all $k<0$, and so $x^{1}\in Z_{-\infty}\setminus X_{0} = X_{1}.$

Now, we show $x^{0}\sim_{ae}x^{1}$. Since $(x^{0, n_{k}})_{k<0}$ converges to $x^{0}$ and $(x^{1, n_{k}})_{k<0}$ converges to $x^{1}$, there is a non-increasing sequence $(m_{k})_{k<0}$ such that $m_{k}\geq n_{k}$, $\lim_{k\to -\infty}m_{k} = -\infty$ and $x_{m_{k}}^{i,n_{k}}....x^{{i,n_{k}}}_{-1} = x_{m_{k}}^{i}...x^{i}_{-1}$ for each $i\in\{0,1\}$ and $k<0$. If we denote $g_{m_{k}} =  f_{n_{k}}|_{p^{0,n_{k}}_{n_{k}}...p_{m_{k}-1}^{0,n_{k}}}$, for all $k<0$ then it follows that $g_{m_{k}}\cdot x_{m_{k}}^{0}...x^{0}_{-1} = x_{m_{k}}^{1}...x_{-1}^{1}$. Since $F$ is finite and $(G, E)$ is contracting, we have that $F' = \bigcup_{n\in\mathbb{N}}F|_{E^{n}}$ is finite. Define, for $m_{k} > n > m_{k+1}$, $g_{n} = g_{m_{k+1}}|_{x_{m_{k+1}}^{0}...x^{0}_{n+1}}$. Then, $(g_{n})_{n<0}\subseteq F'$ satisfies $d(g_{n}) = r(x_{n}^{0})$ and $g_{n}\cdot x_{n}^{0}...x^{0}_{-1} = x^{1}_{n}...x_{-1}^{1}$ for all $n<0$. Hence, $x^{0}\sim_{ae} x^{1}$.

We have shown $q(x^{0}) = q(x^{1})\in \Jj_{0}\cap \Jj_{1}$, which is a contradiction to the assumption that $\Jj_{0}\cap\Jj_{1}=\emptyset$. Hence, $\Jj_{-\infty}$ is connected and $q(\mu)\sim_{C} q(\nu)$.
\end{proof}

\begin{rmk}
    Let $\Cc_{(G, E)} = \Jj_{G, E}/\sim_{C}$ and $q_{C}:E^{-\infty}\to \Cc_{(G, E)}$ the quotient map. Define, for $\eta \in E^{n}$, $n\in\mathbb{N}$, the set 
\[
U_{\eta} = q_{C}\left(\bigcup_{\{g\in G \mid d(g) = r(\eta)\}}Z(g\cdot \eta]\right).
\]
These clopen sets form a basis for the quotient topology on $\Cc_{(G, E)}$. It is easy to see that $\mu \sim_{e} \nu$ implies $\sigma(\mu )\sim_{e}\sigma(\nu)$, and so there is an induced dynamical system $\sigma_{C}:\Cc_{(G, E)}\to \Cc_{(G, E)}$. It is not typically locally injective, but it is always an open mapping when $(G,E)$ is contracting.
\end{rmk}

\begin{prp}\label{prop:connectediff}
  Let $(G_B,E_A)$ be a KEP-action such that $B\in M_{N}(\{0,1\})$. For $\mu, \nu \in E^{-
\infty}_{A}$, $\mu \sim_{e} \nu$ if and only if $\mu = \nu$, or there is $K<0$ such that $s(\mu_{k}) = s(\nu_{k}) \coloneqq v_{k}$ and $B_{v_{k-1}, v_{k}} = 1$ for all $k\leq K$, and $B_{v_{K}, v_{K+1}} = 0$, $\mu_{K+1}...\mu_{-1} = \nu_{K+1}...\nu_{-1}$ if $K < 1$.
\end{prp}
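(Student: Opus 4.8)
The plan is to recast $\sim_e$ as a condition on finite windows and then exploit the sharp dichotomy that $B\in M_N(\{0,1\})$ imposes on the action. Since in the definition of $\sim_e$ each $g_n$ only constrains the window $\mu_n\cdots\mu_{-1}$, assembling or disassembling the sequence $(g_n)_{n<0}$ shows that $\mu\sim_e\nu$ holds if and only if for every $n<0$ there is $k_n\in\ZZ$ with $a^{k_n}_{r(\mu_n)}\cdot\mu_n\cdots\mu_{-1}=\nu_n\cdots\nu_{-1}$; equivalently, the windows $\mu[n,-1]$ and $\nu[n,-1]$ lie in a common $(G_B)_{r(\mu_n)}$-orbit for every $n$. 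Because the action fixes the range and source of every edge, a common orbit forces $r(\mu_k)=r(\nu_k)$ and $s(\mu_k)=s(\nu_k)$ for all $k$, so I may set $v_k\coloneqq s(\mu_k)=s(\nu_k)$; this is the vertex matching in the statement, and it guarantees $B_{\mu_k}=B_{v_{k-1},v_k}=B_{\nu_k}$.

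The heart of the argument is a description of these orbits read off from \eqref{ExelPardoKatsura_action}. If $B_{ij}=0$ then $a^k\cdot e_{i,j,m}=e_{i,j,m}$ with trivial restriction, so any $a^k$ fixes a window whose leading edge has $B=0$ and its orbit is the singleton. If $B_{ij}=1$ then $e_{i,j,m}$ is a digit in an $A_{ij}$-ary odometer: adding $k$ at position $n$ alters that digit and passes a carry to position $n+1$, and this carry is annihilated precisely at the first edge with $B=0$ it meets (where the restriction becomes trivial) and is lost off the right end if it never meets one. I would record this as a short lemma: writing $q$ for the least index $\ge n$ with $B_{\mu_q}=0$ (undefined if there is none), the orbit of $\mu[n,-1]$ when $B_{\mu_n}=1$ is exactly the set of windows with the same vertex sequence that agree with $\mu$ on every position $\ge q$ (all windows with that vertex sequence, when $q$ is undefined).

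For the forward implication I assume $\mu\sim_e\nu$, $\mu\neq\nu$, and let $P\le-1$ be the largest index with $\mu_P\neq\nu_P$. For each $n\le P$ the window $\mu[n,-1]$ differs from $\nu[n,-1]$ at position $P$, yet they share an orbit; by the orbit lemma this is impossible unless $B_{\mu_n}=1$ and the first $B=0$ index exceeds $P$, giving $B_{v_{k-1},v_k}=1$ for all $k\le P$. If every edge has $B=1$ I take $K=-1$, for which the boundary and agreement conditions are vacuous. Otherwise every index carrying $B=0$ lies above $P$; taking $K+1$ to be the least such index yields $K\ge P$, $B_{v_{k-1},v_k}=1$ for all $k\le K$, $B_{v_K,v_{K+1}}=0$, and $\mu_{K+1}\cdots\mu_{-1}=\nu_{K+1}\cdots\nu_{-1}$ because $K\ge P$. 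This is the $K$ demanded by the statement.

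The converse is a direct verification of the orbit condition for all $n$ from the given $K$. For $K<n\le-1$ the agreement condition makes $\mu[n,-1]=\nu[n,-1]$, so they trivially share an orbit; for $n\le K$ positions $n,\dots,K$ all have $B=1$ while position $K+1$ has $B=0$, so the orbit lemma identifies the orbit of $\mu[n,-1]$ with all windows agreeing with $\mu$ on positions $\ge K+1$, and $\nu[n,-1]$ is one of these by the agreement condition. The case $K=-1$ (all edges $B=1$) is the $q$-undefined case of the lemma and is immediate. I expect the main obstacle to be the orbit lemma itself, specifically checking that the carry is genuinely absorbed at the first $B=0$ edge and never reactivates (and that edges with $A_{ij}=1$ merely transmit the carry without contributing freedom), and then matching the index $K$ to the largest disagreement position $P$ so that the run, boundary, and agreement conditions line up in both directions.
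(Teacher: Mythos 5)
Your proposal is correct and follows essentially the same route as the paper: your ``orbit lemma'' is exactly the paper's two key observations packaged together --- that a $B=0$ edge is fixed with trivial restriction (so carries are absorbed and everything to its right is frozen), and that the odometer action on a $B=1$ stretch is recurrent, hence transitive on windows with a fixed vertex sequence --- and your converse is the paper's induction verbatim. The only cosmetic difference is in the forward direction, where you argue by contradiction from the largest disagreement index $P$, while the paper applies the $B=0$ fixing property directly at the first index $K+1$ with $B_{v_K,v_{K+1}}=0$; both yield the same $K$, and your reading of the statement's ``$K<1$'' as the intended ``$K<-1$'' matches the paper's (typo-laden) proof.
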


\begin{proof}
Suppose $\mu \sim_{e} \nu$ and let $(g_{k})_{k<0}$ be a sequence of groupoid elements such that $d(g_{k}) = r(\mu_{k})$ and $g_{k}\cdot \mu_{k}...\mu_{-1} = \nu_{k}...\nu_{-1}$ for all $k<0$. Since $G_B$ is a group bundle, the action of it on $E^{*}_{A}$ preserves the range and source vertices of paths. Therefore, $s(\mu_{k}) = s(g_{k}\cdot \mu_{k}) = s(\nu_{k})$ for all $k<0$.

If $B_{v_{k-1}, v_{k}} = 0$, then we have $g_{k}\cdot \mu_{k} = \mu_{k}$ and $g|_{\mu_{k}} = v_{k}$, so that $\mu_{k}...\mu_{-1} = g \cdot \mu_{k}...\mu_{-1} = \nu_{k}...\nu_{-1}$. So, if $B_{v_{k-1}, v_{k}} = 0$ infinitely often, then $\mu = \nu$. Otherwise, there is $K<0$ such that $B_{v_{k-1}, v_{k}} = 1$ for $k\leq K$ and $B_{v_{K}, v_{K+1}} = 0$ if $K > 1$, in which case $\mu_{K+1}..\mu_{-1} = \nu_{K+1}...\nu_{-1}$.

We prove the reverse direction. For $k\leq K$, each $A_{v_{k-1}, v_{k}}$-odometer action is \textit{recurrent} in the sense that given $g\in G_{B}$ and $e, f\in E^{1}_{A}$ satisfying $d(g) = v_{k}$, $r(e) = r(f) = v_{k-1}$, and $s(e) = s(f) = v_{k}$, there is $h\in G_{B}$ such that $d(h) = v_{k-1}$, $h\cdot e = f$, and $h|_{e} = g$. It is then an easy induction argument to see the action of $\{g\in {G}_{B} \mid d(g) = v_{k-1}\}$ on $\{\mu = \mu_{k}...\mu_{K} \in E^{(K- k) +1}_{A} \mid r(\mu) = v_{k-1}, s(\mu_{j}) = v_{j}\text{ }\forall \, k\leq j\leq K\}$ is transitive.
\par
It follows that every path $\eta = ...\eta_{K-1}\eta_{K}$ such that $s(\eta_{k}) = s(\mu_{k})$ for all $k\leq K$ satisfies $\eta \sim_{e} ...\mu_{K-1}\mu_{K}$. If $K = 1$, then we are done. Otherwise, since $B_{v_{K}, v_{K+1}} = 0$, we have $\nu \coloneqq \eta \mu_{K+1}...\mu_{-1}\sim_{e} \mu$.
\end{proof}

Since the action of a KEP-action preserves the vertices of paths, there are factor maps  $\pi_{\Jj}:\Jj_{G_{B},E_{A}}\to E^{-\infty}_{C}$ and $\pi_{\Cc}:\Cc_{G_{B},E_{A}}\to E^{-\infty}_{C}$, where $C$ is the connectivity matrix of $E_{A}$. We can use this factor map to describe the connected components of $\Jj_{G_{B},E_{A}}$. The following fact is contained in the proof of Proposition \ref{prop:connectediff}.
\begin{cor}
 Let $(G_B,E_A)$ be a KEP-action such that $B\in M_{N}(\{0,1\})$ and $z\in \Jj_{(G_B,E_A)}$. Denote $\pi_{\Jj}(z) = v$. Then, $z$ is a connected component if $B_{v_{k-1}, v_{k}} = 0$ infinitely often.
\end{cor}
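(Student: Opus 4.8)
The plan is to reduce everything to the two results immediately preceding, namely the identification $\Cc(\Jj_{G_B,E_A}) = E_A^{-\infty}/\sim_e$ and the explicit description of $\sim_e$ in Proposition~\ref{prop:connectediff}. Writing $q\colon E_A^{-\infty}\to\Jj_{G_B,E_A}$ for the quotient map and fixing a representative $\mu\in E_A^{-\infty}$ with $z=q(\mu)$, the former result says that two points $q(\mu),q(\nu)$ lie in the same connected component exactly when $\mu\sim_e\nu$. Hence the connected component of $z$ is $q(\{\nu\in E_A^{-\infty}\mid \nu\sim_e\mu\})$, and to prove that $z$ is its own connected component it suffices to show that this $\sim_e$-class is exactly $\{\mu\}$. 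Note first that $v=\pi_{\Jj}(z)$ is well defined independently of the representative: since $(G_B,E_A)$ is a group bundle its action preserves source vertices, so $v_k=s(\mu_k)$ for every representative $\mu$ of $z$, and the hypothesis on $v$ is therefore unambiguous.

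Next I would invoke Proposition~\ref{prop:connectediff} directly. Suppose $\nu\sim_e\mu$ with $\nu\neq\mu$. Then the second alternative of Proposition~\ref{prop:connectediff} must hold, producing an index $K<0$ with $s(\mu_k)=s(\nu_k)=v_k$ and $B_{v_{k-1},v_k}=1$ for all $k\leq K$. But the hypothesis is that $B_{v_{k-1},v_k}=0$ for infinitely many $k$, so in particular there exists some $k\leq K$ with $B_{v_{k-1},v_k}=0$, contradicting $B_{v_{k-1},v_k}=1$ for all $k\leq K$. Therefore no such $\nu$ exists, and the $\sim_e$-class of $\mu$ collapses to $\{\mu\}$.

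Combining the two steps, the connected component of $z$ equals $q(\{\mu\})=\{z\}$, so $z$ is a connected component, as claimed.

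I do not expect any genuine obstacle here: the corollary is essentially a repackaging of Proposition~\ref{prop:connectediff}, as the preceding remark already indicates. The only point requiring a moment's care is the bookkeeping between the three equivalence relations $\sim_{ae}$, $\sim_e$ and $\sim_C$---in particular confirming that the fibres of $q$ are $\sim_{ae}$-classes while its connected-component quotient is $E_A^{-\infty}/\sim_e$, so that ruling out nontrivial $\sim_e$-partners of $\mu$ is exactly what certifies $\{z\}$ to be a full connected component.
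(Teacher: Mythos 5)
Your proof is correct and is essentially the paper's own argument made explicit: the paper's one-line proof likewise amounts to combining Proposition \ref{prop:connectediff} (whose dichotomy shows the $\sim_{e}$-class of any representative $\mu$ of $z$ is $\{\mu\}$ when $B_{v_{k-1},v_{k}}=0$ infinitely often) with the identification $\Cc(\Jj_{G,E})=E^{-\infty}/\sim_{e}$ from the unnamed proposition opening Section 5. The one caveat, which affects the paper equally, is that that component-space proposition is stated for finitely generated \emph{contracting} self-similar groupoids while the corollary carries no contracting hypothesis; the direction you actually use (that $\mu \nsim_{e} \nu$ yields a saturated clopen set separating $q(\mu)$ from $q(\nu)$, hence distinct components) is precisely the half of its proof that never invokes contraction, so your argument goes through verbatim.
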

\begin{proof}
The fact that $z\in \Jj_{(G_B,E_A)}$ is a connected component if $B_{v_{k-1}, v_{k}} = 0$ infinitely often is contained in the proof of Proposition \ref{prop:connectediff}.
\end{proof}

 We now study the remaining case where $\pi_{\Jj}(z) = v$ satisfies $B_{v_{k-1},v_{k}} = 1$ eventually.

 Suppose first $B_{v_{k-1},v_{k}} = 1$ for all $k < 0$.
Consider the subset $X_{v} = \{\mu \in E_{A}^{-\infty} \mid s(\mu_{k}) = v_{k}, \, \forall k<0\}$. Let $\iota_{v}:X_{v}\to \mathcal{A}_{v} \coloneqq \Pi_{k<0}\{0,..., A_{v_{k-1}, v_{k}}-1\}$ be the natural identification, where we send $\mu = (...e_{v_{-3}, v_{-2}, i_{-2}}e_{v_{-2}, v_{-1}, i_{-1}})$ to $\iota_{v}(\mu) = (...i_{-2}i_{-1})$.

Define a mapping $C_{v}:\mathcal{A}_{v}\to \mathbb{T}^{1} = \mathbb{R}/\mathbb{Z}$ by sending $i = (...i_{-2}i_{-1})$ to $C_{v}(i) = \sum^{-\infty}_{k=-1} \frac{i_{k}}{A_{v[k,-1]}}.$ It is easy to see that $C_{v}:\mathcal{A}_{v}\to\mathbb{T}$ is a surjection if $\max_{k<0}A_{v[k,-1]}=\infty$; for $t\in [0,1)$ write $t_{0} = t$, $t_{0} = \frac{t_{-1} + i_{-1}}{A_{v_{-2},v_{-1}}}$ for some $t_{-1}\in [0,1)$ and $i_{-1}\in\{0,...,A_{v_{-2},v_{-1}}-1\}$ and inductively $t_{k-1} = \frac{t_{k} + i_{k}}{A_{v_{k-1},v_{k}}}$, for $t_{k}\in [0,1)$ and $i_{k}\in\{0,...,A_{v_{k-1},v_{k}}-1\}$. Since $\max_{k<0}A_{v[k,-1]}=\infty$, we have $C_{v}(...i_{-2},i_{-1}) = t$.
\par
If $(G_{B}, E_{A})$ is regular, the case where $\max_{k<0}A_{v[k,-1]}<\infty$ can only happen if $A_{v_{k-1}, v_{k}} = 1$ for all $k\in\mathbb{N}$. In this case, $\mathcal{A}_{v}$ is a single point.

\begin{prp}\label{ae_wrt_connected}
Let $(G_B, E_A)$ be a regular KEP-action such that $B\in M_{N}(\{0,1\})$. Suppose $v\in E_{C}^{-\infty}$ satisfies $B_{v_{k-1},v_{k}} = 1$ for all $k\in\mathbb{N}$. For $\mu, \nu \in\mathcal{A}_{v}$, $\mu \sim_{ae} \nu$ if and only if $C_{v}(\mu) = C_{v}(\nu)$.
\end{prp}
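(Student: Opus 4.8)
The plan is to recognise $\mathcal{A}_v$, together with the action of the groupoid elements $a^k_{v_{n-1}}$ on finite truncations, as a mixed-radix odometer with bases $A_{v_{k-1},v_k}$, and to identify $C_v$ as the associated ``read off the real number'' map. First I would dispose of the degenerate case: by the discussion preceding the statement, regularity forces either $\mathcal{A}_v$ to be a single point — in which case $\mu=\nu$ and the claim is trivial — or $\max_{k<0}A_{v[k,-1]}=\infty$; since the products $A_{v[n,-1]}$ are non-decreasing as $n\to-\infty$, the latter yields $A_{v[n,-1]}\to\infty$, which I use throughout. For a truncation $\mu_n\cdots\mu_{-1}$ with digits $i_j$ (so $\mu_j=e_{v_{j-1},v_j,i_j}$), I set $\mathrm{val}(\mu_n\cdots\mu_{-1})=\sum_{j=n}^{-1} i_j/A_{v[j,-1]}$. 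A telescoping estimate (using $i_j\le A_{v_{j-1},v_j}-1$) shows $\mathrm{val}\in[0,1)$ and, more importantly, the tail bound $0\le C_v(\mu)-\mathrm{val}(\mu_n\cdots\mu_{-1})\le 1/A_{v[n,-1]}$, so the truncations converge to $C_v(\mu)$ both in $\TT$ and as reals in $[0,1]$.

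The crux is the transformation formula
\[
\mathrm{val}\big(a^k_{v_{n-1}}\cdot \mu_n\cdots\mu_{-1}\big)\equiv \mathrm{val}(\mu_n\cdots\mu_{-1})+\frac{k}{A_{v[n,-1]}}\pmod 1,
\]
which I would prove by induction on $|n|$ using $B_{v_{k-1},v_k}=1$. For a single edge this is exactly the defining relation $k+i_{-1}=\hat k\,A_{v_{-2},v_{-1}}+\hat\imath_{-1}$ of \eqref{ExelPardoKatsura_action}; the inductive step combines the carry $\hat k=a^k_{v_{n-1}}|_{\mu_n}$ acting on the tail with the identity $\hat\imath_n/A_{v[n,-1]}+\hat k/A_{v[n+1,-1]}=(k+i_n)/A_{v[n,-1]}$. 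This is simply the $B=1$ specialisation of \eqref{induc on n} rewritten in the left-infinite convention. I would also record that $\mathrm{val}$ restricts to a bijection from length-$|n|$ truncations onto $\frac{1}{A_{v[n,-1]}}\ZZ\cap[0,1)$ — uniqueness of mixed-radix expansions — so equal valuations in $[0,1)$ force equal truncations.

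The forward direction is then immediate: if $\mu\sim_{ae}\nu$ there is a bounded sequence $(k_n)$ with $a^{k_n}_{v_{n-1}}\cdot\mu_n\cdots\mu_{-1}=\nu_n\cdots\nu_{-1}$; applying the transformation formula and letting $n\to-\infty$ sends $k_n/A_{v[n,-1]}\to0$, giving $C_v(\mu)=C_v(\nu)$ in $\TT$. The main obstacle is the reverse direction, where I must produce a \emph{bounded} sequence implementing $\sim_{ae}$. Assuming $C_v(\mu)=C_v(\nu)$, I would set $\delta_n=\mathrm{val}(\nu_n\cdots\nu_{-1})-\mathrm{val}(\mu_n\cdots\mu_{-1})\in\frac{1}{A_{v[n,-1]}}\ZZ$, reduce to its representative $\delta_n'\in(-\frac12,\frac12]$, and put $k_n=A_{v[n,-1]}\delta_n'\in\ZZ$. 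The transformation formula and injectivity of $\mathrm{val}$ on truncations give $a^{k_n}_{v_{n-1}}\cdot\mu_n\cdots\mu_{-1}=\nu_n\cdots\nu_{-1}$ on the nose; all the content is the uniform bound $|k_n|\le1$ for $|n|$ large.

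This bound is exactly where $C_v(\mu)=C_v(\nu)$ (rather than mere equality of some finite data) is used, and it is the delicate point. When $C_v(\mu)=C_v(\nu)$ as real numbers, the tail estimate writes $\delta_n$ as a difference of two quantities each in $[0,1/A_{v[n,-1]}]$, so $|\delta_n|\le1/A_{v[n,-1]}$ and hence $|k_n|\le1$ with $\delta_n'=\delta_n$. The only remaining case is when the common value of $C_v$ is the class of both $0$ and $1$ in $\TT$; then non-negativity of the digits forces $\mu$ to be the all-zero path and $\nu$ the all-maximal-digit path (or vice versa), whence $\delta_n=1-1/A_{v[n,-1]}$, $\delta_n'=-1/A_{v[n,-1]}$, and $k_n=-1$. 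In every case $(k_n)$ eventually lies in $\{a^j_{w}\mid |j|\le1,\ w\in E^0_A\}$, so adjoining the finitely many exceptional terms for small $|n|$ keeps the whole sequence in a finite subset of $G_B$, and $\mu\sim_{ae}\nu$.
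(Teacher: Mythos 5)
Your proposal is correct, and it reaches the statement by a genuinely different route than the paper. The paper's proof is combinatorial: it first parametrises the fibres of $C_{v}$ as pairs of mixed-radix expansions related by a single carry (a value $t$ has two expansions exactly when some tail $t_{k}$ vanishes), then uses the contraction estimate \eqref{induc on n} together with invariance of $\{a^{-1},\mathrm{id},a\}$ under restriction to replace any sequence implementing $\mu\sim_{ae}\nu$ by one with exponents in $\{-1,0,1\}$ of constant sign, and finally matches this against the double-expansion pairs in a step it calls routine. You instead work directly with the circle-valued valuation: your mod-$1$ transformation formula (correctly identified as the $B=1$ case of \eqref{induc on n}) gives the forward implication immediately from boundedness of the exponents and $A_{v[n,-1]}\to\infty$, with no reduction to the nucleus at all, and in the reverse direction you construct the implementing exponents $k_{n}=A_{v[n,-1]}\delta_{n}'$ explicitly, the crucial uniform bound $|k_{n}|\le 1$ coming from the real tail estimate $0\le C_{v}-\mathrm{val}\le 1/A_{v[n,-1]}$ rather than from carry combinatorics. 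Your route buys three things: it is quantitative and self-contained; it makes the paper's ``routine to see'' step explicit; and it treats the wrap-around fibre cleanly --- note that the paper's parametrisation of expansions by $t\in[0,1)$ literally produces only the all-zero expansion at $t=0$ and misses its partner, the all-maximal-digit path of real value $1\equiv 0$ in $\TT$, whereas your case analysis isolates exactly this pair and exhibits the implementing sequence $k_{n}=-1$ for it. What the paper's route buys is brevity, given the contraction machinery already in place, and a normal form (constant-sign nucleus sequences) for asymptotic equivalence. Two pedantic remarks, neither a gap: if $\delta_{n}=-\tfrac12$ exactly (possible only while $A_{v[n,-1]}=2$, hence only for finitely many $n$) your representative is $\delta_{n}'=+\tfrac12$ rather than $\delta_{n}$, but this still gives $|k_{n}|=1$; and in the forward direction a finite subset of $G_{B}$ determines the exponents only up to possibly finite isotropy, so one should fix one integer representative per element of the finite set to obtain the bounded sequence $(k_{n})$ you invoke.
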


\begin{proof}
The proposition is trivial (by regularity) if $\max_{k<0}A_{v[k,-1]}<\infty$, so we assume $\max_{k<0}A_{v[k,-1]}=\infty$.
Given $t\in [0,1)$, there is a unique choice for $(...i_{-2}i_{-1})$ if and only if $t_{k}\neq 0$ for all $k<0$.

If $t_{k} = 0$, for some $k<0$ then if we let $K\geq k$ be the first number such that $i_{K}\neq 0$, then $C_{v}(...00i_{K}...i_{-1}) = C_{v}(...(A_{v_{K-2},v_{K-1}}-1)(i_{K}-1)...i_{-1})$, and these are the only two choices. 

Suppose $\mu\sim_{a.e.}\nu$ for $\mu,\nu\in X_{v}$, and let
$\{g_{k}\}_{k<0}$ and $F\subseteq \mathbb{N}$ be a finite set satisfy $g_{k}\cdot \mu_{k}...\mu_{-1} = \nu_{k}...\nu_{-1}$ for all $k < 0$ and $g_{k} = a^{m_{k}}_{r(\mu_{k})}$ for $m_{k}\in F$. By \eqref{induc on n} and $\max_{k<0}A_{v[k,-1]}=\infty$, there is $K\in\mathbb{N}$ such that $g_{k}|_{\mu[k,k+K-1]} = a^{l_{k}}_{r(\mu_{k+K})}$ for some $l_{k}\in\{-1,0,1\}$. Since $\{g\in G_{B}\mid  g = a_{i}^{l},l\in\{-1,0,1\}\}$ is invariant under the restriction map, it follows that we may assume $g_{k} = a_{r(\mu_{k})}^{m_{k}}$ for $m_{k}\in\{-1,0,1\}$. Further invariance conditions imply either $m_{k}\geq 0$ for all $k <0$ or $m_{k}\leq 0$ for all $k < 0$. It is then routine to see $\mu \sim_{ae} \nu$ if and only if $\iota(\mu) = \iota(\nu)$ or $\{\iota(\mu), \iota(\nu)\} = \{(...00i_{k}...i_{-1}),(...(A_{v_{k-2},v_{k-1}}-1)(i_{k}-1)...i_{-1})\}$ for some $k\in\mathbb{N}$.
\end{proof}

We have for $\mu \in\mathcal{A}_{v}$, 
\[
C_{v}(\mu)^{A_{v_{-2},v_{-1}}} = A_{v_{-2},v_{-1}}\sum^{-\infty}_{k=-1}\frac{i_{n}}{A_{v[k,-1]}} = \sum^{-\infty}_{k=-2}\frac{i_{k}}{A_{v[k,-2]}} = C_{\sigma(v)}(\sigma(\mu)).
\]
Therefore, when the connected components above the paths $v$ and $\sigma(v)$ in the connectivity graph $E_{C}$ of $E_{A}$ are identified with the circle, the dynamics becomes $z\to z^{A_{v_{-2}, v_{-1}}}$. We are now able to summarise the results of this section into a theorem.

\begin{thm}
\label{thm:description}
Let $(G_B,E_A)$ be a regular KEP-action such that $B\in M_{N}(\{0,1\})$. Then, a connected component of $\Jj_{G_{B}, E_{A}}$ is either a point or a circle.

Let $\pi:E^{-\infty}_{A}\to E^{-\infty}_{C}$ and $\pi_{\Jj}:\Jj_{G_{B}, E_{A}}\to E^{-\infty}_{C}$ be the induced factor maps, where $C$ is the connectivity matrix for the graph $E_{A}$. For $v \in E_C^{-\infty}$, let $K(v) = -\min\{k<0 \mid  B_{v_{k-1}, v_{k}} = 0\}$. Set $K(v) = 0$ if $\{k<0 \mid  B_{v_{k-1}, v_{k}} = 0\} = \varnothing$.
\begin{enumerate}
    \item If $K(v) = \infty$, then $\pi_{\Jj}^{-1}(v)\simeq \pi^{-1}(v)$. Under this identification, $\tilde{\sigma}$ is the shift $\sigma:\pi^{-1}(v)\to \pi^{-1}(\sigma(v))$.
    
    \item If $K(v) = 0$ and $\max_{k<0}A_{v[k,-1]} = \infty$, then $\pi_{\Jj}^{-1}(v)\simeq \mathbb{T}_{v}$. Under this identification, $\tilde{\sigma}$ is $z^{n}:\mathbb{T}_{v}\to \mathbb{T}_{\sigma(v)},$ $n = A_{v_{-2},v_{-1}}$.
    
    \item If $K(v) = 0$ and $\max_{k<0}A_{v[k,-1]} < \infty$, then $\pi^{-1}(v)$ is a single point.
    
    \item if $ 0 < K(v) < \infty$, then $\pi_{\Jj}^{-1}(v)\simeq \pi^{-1}_{\Jj}(\sigma^{K(v)}(v))\times \pi^{-1}(v_{K-1}...v_{1})$ and cases $(2)$ and $(3)$ apply to describe $\pi^{-1}_{\Jj}(\sigma^{K(v)}(v))$. Under this identification, $\tilde{\sigma}$ is $\text{id}\times \sigma:\pi^{-1}_{\Jj}(\sigma^{K(v)}(v))\times \pi^{-1}(v_{K-1}...v_{1})\to \pi^{-1}_{\Jj}(\sigma^{K(v)}(v))\times \pi^{-1}(v_{K-1}...v_{2})$.
\end{enumerate}

\end{thm}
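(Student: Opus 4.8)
The plan is to read off each fiber $\pi_{\Jj}^{-1}(v)$ as the image in $\Jj_{G_B,E_A}=E_A^{-\infty}/\!\sim_{ae}$ of the set $X_v=\{\mu\in E_A^{-\infty}\mid s(\mu_k)=v_k\ \forall k<0\}$, and to use that a connected component lying over $v$ is exactly the image of a single $\sim_e$-class. Since $\sim_{ae}\subseteq\sim_e$, the whole analysis reduces to combining three facts already established: Proposition~\ref{prop:connectediff} (describing $\sim_e$), the corollary following it (a point whenever $B_{v_{k-1},v_k}=0$ infinitely often), and Proposition~\ref{ae_wrt_connected} together with the surjection $C_v\colon\mathcal{A}_v\to\TT$ (describing $\sim_{ae}$ in the all-ones regime). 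Because $(G_B,E_A)$ is regular with $B\in M_N(\{0,1\})$ it is contracting by Corollary~\ref{regular_characterization_01}, so $\Jj_{G_B,E_A}$ is compact Hausdorff; this lets me upgrade every set-theoretic bijection below to a homeomorphism via the standard fact that a continuous bijection from a compact space to a Hausdorff space is a homeomorphism.

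For the points-or-circles dichotomy and cases (1)--(3) I would split on the behaviour of $B$ along $v$. If $B_{v_{k-1},v_k}=0$ infinitely often (so $K(v)=\infty$), the forward direction of Proposition~\ref{prop:connectediff} forces $\mu\sim_e\nu\Rightarrow\mu=\nu$ on $X_v$, hence $\sim_e$ and a fortiori $\sim_{ae}$ are trivial there; thus $\pi_{\Jj}^{-1}(v)\simeq\pi^{-1}(v)$ with $\tilde{\sigma}$ the shift, giving case (1) and showing each such point is its own component. If instead $B_{v_{k-1},v_k}=1$ for all $k<0$ (so $K(v)=0$), the transitivity argument in the reverse direction of Proposition~\ref{prop:connectediff} shows the single $\sim_e$-class is all of $X_v\simeq\mathcal{A}_v$. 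Proposition~\ref{ae_wrt_connected} then identifies $\sim_{ae}$ on $\mathcal{A}_v$ with the fibers of $C_v$, so $\pi_{\Jj}^{-1}(v)\simeq\mathcal{A}_v/\!\sim_{ae}$ is $\TT$ when $\max_{k<0}A_{v[k,-1]}=\infty$ (case (2), using surjectivity of $C_v$) and a single point when this maximum is finite, which by regularity can only occur if $A_{v_{k-1},v_k}=1$ for all $k$ (case (3)). The identity $C_v(\mu)^{A_{v_{-2},v_{-1}}}=C_{\sigma(v)}(\sigma(\mu))$ computed just before the theorem then shows $\tilde{\sigma}$ is $z\mapsto z^{A_{v_{-2},v_{-1}}}$ in case (2), and a point map in case (3).

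For case (4), where $0<K(v)<\infty$, I would apply Proposition~\ref{prop:connectediff} at the most recent index $k=-K(v)$ at which $B_{v_{k-1},v_k}=0$: two paths in $X_v$ are $\sim_e$ precisely when they agree on the final $K(v)-1$ edges and are arbitrary (subject to the vertex constraint) on the remaining, all-ones tail. This factors $X_v$ as the tail part, governed by $\sigma^{K(v)}(v)$, crossed with the rigid finite block of edges over $v_{-K(v)+1}\cdots v_{-1}$; passing to $\sim_{ae}$, which only mixes the tail, gives $\pi_{\Jj}^{-1}(v)\simeq\pi_{\Jj}^{-1}(\sigma^{K(v)}(v))\times\pi^{-1}(v_{K-1}\cdots v_1)$, with the first factor described by cases (2)--(3). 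Since $\tilde{\sigma}$ peels one edge off the finite block and leaves the tail component fixed, it becomes $\mathrm{id}\times\sigma$.

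I expect the main obstacle to be the topological bookkeeping in case (4): verifying that the quotient $X_v/\!\sim_{ae}$, with the subspace topology inherited from $\Jj_{G_B,E_A}$, genuinely splits as the claimed product (that $\sim_{ae}$ respects the tail/block decomposition and does not secretly couple the factors) and that the product topology matches. The compact-Hausdorff upgrade handles the abstract bijections cleanly, so the remaining work is confined to checking that the decomposition of $\sim_e$ in Proposition~\ref{prop:connectediff} is compatible with both $\sim_{ae}$ and the shift, which is routine once the indices in case (4) are tracked carefully.
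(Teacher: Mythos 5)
Your proposal is correct and takes essentially the same route as the paper, which gives no separate proof but presents Theorem \ref{thm:description} as a summary of Section \ref{Bundles of odometers}: components are images of $\sim_{e}$-classes (the first proposition of the section, using that regularity plus $B\in M_N(\{0,1\})$ gives contracting via Corollary \ref{regular_characterization_01}), case (1) comes from the forward direction of Proposition \ref{prop:connectediff} and its corollary, cases (2)--(3) from Proposition \ref{ae_wrt_connected} together with the surjectivity of $C_{v}$ and the computation $C_{v}(\mu)^{A_{v_{-2},v_{-1}}} = C_{\sigma(v)}(\sigma(\mu))$ preceding the theorem, and case (4) from the transitivity argument in the reverse direction of Proposition \ref{prop:connectediff}, exactly as you assemble them. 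One minor index slip worth fixing: in case (4) the rigid block consists of the final $K(v)$ edges (indices $-K(v)$ through $-1$), not $K(v)-1$, since the edge at index $-K(v)$ with $B_{v_{-K(v)-1},v_{-K(v)}}=0$ is itself fixed by every groupoid element with the appropriate domain --- this is forced by consistency with your own (correct) tail factor $\pi_{\Jj}^{-1}(\sigma^{K(v)}(v))$, and mirrors the indexing sloppiness already present in the theorem's statement.
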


\begin{exm}
    The description in Theorem \ref{thm:description} does not extend when $B$ takes values different than $0$ or $1$. For instance, if $A = (3)$ and $B = (2)$, then $\Jj_{G_B,E_A}$ is not homeomorphic to the circle. For if $\Jj_{(G_B,E_A)}\simeq \mathbb{T}^{1}$, then $(\tilde{\sigma}, \Jj_{G_B,E_A})$ is conjugate to either $(\alpha^{-3}, \mathbb{T}^{1})$ or $(\alpha^{3}, \mathbb{T}^{1})$, where $\alpha:\mathbb{T}^{1}\to \mathbb{T}^{1}$ is a homeomorphism isotopic to the identity. Hence, the K-theory of the $C^{*}$-algebra associated to $(\tilde{\sigma}, \Jj_{G_B,E_A})$ is isomorphic to the K-theory associated to $(z^{3}, \mathbb{T}^{1})$ or $(z^{-3}, \mathbb{T}^{1})$, which by \cite[Theorem~3]{Hume} is either  $(\mathbb{Z}/2\mathbb{Z}\oplus\mathbb{Z},\mathbb{Z})$ or $(\mathbb{Z}/2\mathbb{Z}, \mathbb{Z}/2\mathbb{Z})$. However, By \cite{BBGHSW}, we have $\mathcal{O}_{(\tilde{\sigma}, \Jj_{G_B,E_A})}\simeq \mathcal{O}_{A,B}$ and Katsura shows in \cite[Example~A.6]{Katsura:class_I} that the K-theory of $\mathcal{O}_{A,B}$ is $(\mathbb{Z}/2\mathbb{Z}, 0)$. \qed
\end{exm}

\section{Planar embedding of Putnam's spaces}

In \cite[Question~7.10]{Putnam:Binary}, Putnam asked when $\mathcal{J}_{\xi}=E^{-\infty}/\sim_{\xi}$ embeds into the plane. In this section, we prove $\Jj_{\xi}$ always embeds into the plane by using our description of $\mathcal{J}_{\xi}$ as the limit space of a regular KEP-action. In particular, Theorem \ref{thm:description} gave a topological description of the limit space when $B \in M_N(\{0,1\})$ as a Cantor set bundle of circles (with the convention that a point is a circle of radius zero). We use this as inspiration to define an embedding from the limit space to the complex plane whenever $B\in M_N(\{0,1\})$ and $(G_{B}, E_{A})$ is regular. See Corollary \ref{regular_characterization_01} for a characterisation of regularity in terms of $A$ and $B$. Notice that Section \ref{Katsura models Putnam} proved that the KEP-actions from embedding pairs always have the property that $A \in M_N(\{0,1,2\})$ and $B=(B_{ij})$ where $B_{ij}=\max\{A_{ij}-1,0\}$. Thus, Putnam's question will be answered by the following more general result.

\begin{thm}\label{Main Embedding Thm}
Let $(G_B,E_A)$ be a regular KEP-action such that $B\in M_N(\{0,1\})$. Then, there is a continuous injection $\zeta:\Jj_{G_B,E_A}\to \mathbb{C}$.
\end{thm}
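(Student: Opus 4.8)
The plan is to leverage the complete topological description of $\Jj_{G_B,E_A}$ furnished by Theorem \ref{thm:description} and turn the ``solar system'' heuristic into a concrete system of nested disks in $\CC$. Writing $q:E_A^{-\infty}\to\Jj_{G_B,E_A}$ for the quotient map, it suffices to produce a continuous map $\Phi:E_A^{-\infty}\to\CC$ that is constant on $\sim_{ae}$-classes and separates distinct classes: then $\Phi$ descends to a continuous bijection $\zeta$ from $\Jj_{G_B,E_A}$ onto its image, which is automatically a topological embedding since $E_A^{-\infty}$ is compact and $\CC$ is Hausdorff. So the whole problem is to manufacture such a $\Phi$.

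I would build $\Phi$ by assigning to every finite path $\omega=\mu_{-n}\cdots\mu_{-1}$ (equivalently, to every cylinder $Z(\omega]$) a closed round disk $D_\omega\subseteq\CC$, with the recursion prepending edges on the far-negative end so that $D_{e\omega}\subseteq D_\omega$ and $\operatorname{diam}D_\omega\to 0$; then $\Phi(\mu)$ is the unique point of $\bigcap_n D_{\mu[-n,-1]}$. The recent coordinates $\mu_{-1},\mu_{-2},\dots$ govern the coarse (large) geometry and the distant coordinates the fine (small) geometry, matching the dominance of the leading term $i_{-1}/A_{v_{-2},v_{-1}}$ in the circle coordinate $C_v(i)=\sum_{k=-1}^{-\infty} i_k/A_{v[k,-1]}$. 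The placement rule is dictated by $B_{\mu_k}\in\{0,1\}$: at a step with $B_{\mu_k}=1$ the $A_{v_{k-1},v_k}$ candidate sub-disks are spaced evenly (and tangentially) around a circle inside $D_\omega$, with the sub-disk for digit $i_k$ placed at the angle given by the corresponding partial sum of $C_v$, so that on such a run $\Phi$ reproduces the explicit map $\mu\mapsto c(v)+\rho(v)e^{2\pi i C_v(i)}$; by Theorem \ref{thm:description} this run yields a genuine circle exactly when $\max_k A_{v[k,-1]}=\infty$ (so $C_v$ is onto $\TT$) and otherwise the orbiting disks must shrink to a single point, which is the degenerate fibre of case (3). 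At a step with $B_{\mu_k}=0$ the sub-disks are instead placed pairwise disjointly inside $D_\omega$, producing the genuinely separate connected components responsible for the bundle structure of cases (3) and (4).

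The delicate matching is that $\Phi$ must identify exactly the $\sim_{ae}$-equivalent paths and no others. On a $B=1$ run the only nontrivial coincidence is the carry ambiguity of Proposition \ref{ae_wrt_connected}, where $(\dots 00\,i_k\cdots i_{-1})$ and $(\dots(A_{v_{k-2},v_{k-1}}-1)(i_k-1)\cdots i_{-1})$ represent the same $t\in\TT$; here the tangency of consecutive evenly-spaced sub-disks forces the two nested sequences to converge to the common tangency point, exactly as the two base-$A$ expansions of an $A$-adic point of $\TT$ coincide. Combined with Proposition \ref{prop:connectediff}, which says inequivalent paths either differ in their vertex sequence or split at a $B=0$ position, and with the disjointness imposed at every $B=0$ step, this gives $\Phi(\mu)=\Phi(\nu)$ if and only if $\mu\sim_{ae}\nu$. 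The main obstacle is purely geometric and entirely about simultaneity: one must choose the radii, centres, and angular offsets so that nesting with shrinking diameters, disjointness of disks from distinct components, and correct closing-up of each circle at its carry boundary all hold at once, and continuously across the totally disconnected base --- including at the transitional vertex sequences where a positive-radius circle fibre degenerates to a point. I expect the cleanest route is to fix in advance a geometrically decaying ``level radius'' schedule $r_n\downarrow 0$ with ratio below the minimum angular gap forced by the largest out-degree of $E_A$, so that the tangency and disjointness inequalities at level $n$ hold with uniform slack independent of $\omega$; continuity of $\zeta$ (and of the circle-to-point degeneration) then follows from $\operatorname{diam}D_\omega\to 0$, and injectivity from the two preceding observations.
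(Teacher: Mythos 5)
Your high-level skeleton is the same as the paper's: both reduce the theorem to constructing a continuous $\Phi:E_A^{-\infty}\to\CC$ that is constant precisely on $\sim_{ae}$-classes (with compactness of $E_A^{-\infty}$ upgrading the induced continuous bijection to an embedding), both use the circle coordinate $C_v$ on $B=1$ runs together with Propositions \ref{prop:connectediff} and \ref{ae_wrt_connected}, and both encode the vertex sequence in a radius and the digits in an angle. But your concrete mechanism --- a level-by-level recursion of nested round disks with tangencies and a uniform strictly decreasing radius schedule $r_n$ --- has a genuine gap at edges with $A_{ij}=1$ and $B_{ij}=1$, which regular KEP-actions do possess (e.g.\ the edge $e_{1,2,0}$ in Example \ref{example1}). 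In your scheme, if $\Phi(\mu)=\Phi(\nu)$ with $\mu\neq\nu$, the common value must be a tangency point lying on the boundary of \emph{every} sufficiently deep disk on both sides; since digit $0$ equals digit $A_{ij}-1$ when $A_{ij}=1$, a carry propagates through such an edge (it contributes nothing to $C_v$), and carries approaching a given digit-disk from \emph{both} of its sides can pass through the same $A_{ij}=1$, $B_{ij}=1$ edge. The unique child disk at that step would then have to meet its parent's boundary at two distinct prescribed points while having strictly smaller radius --- impossible for a round disk nested in a round disk. Taking child $=$ parent instead violates your decay schedule and, when several range-vertex groups share the step, the disjointness you also need. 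There is a dual failure mode: along tails where eventually all $A_{v_{k-1},v_k}=1$ on a $B=1$ run (forced in the finite part by Corollary \ref{regular_characterization_01}), Proposition \ref{ae_wrt_connected} gives \emph{no} carry identification (the fibre is a point, and $C_v$ values of would-be carry partners genuinely differ when $\max_k A_{v[k,-1]}<\infty$), so retaining the tangencies there produces spurious identifications; whether a child may touch a carry point thus depends on the infinite future of the path, which a per-step recursion cannot see without invoking the infinite/finite-part decomposition --- and even granting that, the two-boundary-point obstruction persists inside the infinite part.

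The paper sidesteps exactly this by \emph{not} nesting per step: $\zeta(\mu)=\sum_{I\in\mathcal{I}(\mu)}R^{3I^{+}}\,\Omega(\mu_{I})\,e^{2\pi i\,\theta^{\epsilon(I)}(\mu_{I})}$ assigns one summand per \emph{maximal} interval of constant $B$-type, so within a $B=1$ run an $A=1$ edge alters only the radius datum $\Omega$ and never the angular subdivision, and the type-$0$ angle $\theta^{0}$ is made injective by using denominators $A_{ij}+1$. Crucially, the separation of inequivalent paths is then proved not by disk disjointness but by the quantitative estimates of Lemmas \ref{lem:0}, \ref{lem:1}, \ref{lem:4} and \ref{lem:3}, which exploit the choices $R=M(N+1)$ and the scale factor $R^{3I^{+}}$; your closing claim that the ``tangency and disjointness inequalities at level $n$ hold with uniform slack'' is precisely where all of that work lives, and as stated it is an unproved assertion that is in fact false for the round-disk scheme because of the $A_{ij}=1$, $B_{ij}=1$ edges above. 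If you repair the plan by treating each maximal $B=1$ run as a single unit (one circle coordinate and one vertex-radius per run, geometric scaling between runs), you essentially arrive at the paper's construction, with the remaining content being the separation lemmas and the verification, as in Proposition \ref{easy} and Theorem \ref{thm:embedding}, that coincidence of $\zeta$-values is \emph{equivalent} to $\sim_{ae}$.
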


\begin{cor}
    Let $\xi = \xi^{0},\xi^{1}:H\to E$ be an embedding pair. Then, there is a continuous injection $\zeta:\mathcal{J}_{\xi} \to \mathbb{C}$.
\end{cor}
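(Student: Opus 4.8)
The plan is to deduce the corollary directly from Theorem~\ref{Main Embedding Thm}, after checking that the Katsura-Exel-Pardo model attached to $\xi$ meets its hypotheses; the real content lies in Theorem~\ref{Main Embedding Thm}, whose proof I sketch afterwards. First I would recall from Section~\ref{Katsura models Putnam} (the corollary to Theorem~\ref{thm:out-split}) that the out-split $(G_{OS},E_{OS})$ of $(G_\xi,E)$ is canonically isomorphic to the KEP-action $(G_B,E_A)$ with $A=(A_{v,w})_{v,w\in E_{OS}^0}$ and $B=(\max\{0,A_{v,w}-1\})_{v,w}$, and that $(\tilde\sigma,\mathcal{J}_{\xi})$ and $(\tilde\sigma,\Jj_{G_B,E_A})$ are topologically conjugate; in particular $\mathcal{J}_\xi$ and $\Jj_{G_B,E_A}$ are homeomorphic. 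Since each vertex of $E_{OS}$ is an edge-class of $E$ and $v(E_{OS}^1)w$ is either a singleton or the two-element set $\{(v,\xi^0(h)),(v,\xi^1(h))\}$, we get $A\in M_N(\{0,1,2\})$ and hence $B\in M_N(\{0,1\})$. Finally, $(G_\xi,E)$ is contracting and regular, and by the remark following Theorem~\ref{thm:out-split} out-splitting preserves regularity, so $(G_{OS},E_{OS})\cong(G_B,E_A)$ is regular. Thus Theorem~\ref{Main Embedding Thm} supplies a continuous injection $\zeta_0\colon\Jj_{G_B,E_A}\to\CC$, and precomposing with the conjugating homeomorphism yields the desired $\zeta\colon\mathcal{J}_\xi\to\CC$.

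For Theorem~\ref{Main Embedding Thm} the plan is as follows. Because $(G_B,E_A)$ is contracting, $\Jj_{G_B,E_A}$ is compact and Hausdorff, so any continuous injection into $\CC$ is automatically a topological embedding; it therefore suffices to build a continuous, injective $\zeta$ on $E_A^{-\infty}/\!\sim_{ae}$. I would first read off, from Propositions~\ref{prop:connectediff} and~\ref{ae_wrt_connected} and Theorem~\ref{thm:description}, exactly which data a $\sim_{ae}$-class carries: the vertex sequence $v=v(\mu)\in E_C^{-\infty}$; the positions of the separators (edges with $B_{v_{k-1},v_k}=0$) together with all edges lying in finite runs of $B=1$'s, which together form a purely discrete "skeleton"; and, when separators occur only finitely often, the single circle coordinate $C_v(\mu)\in\TT=\RR/\ZZ$ of the infinite head of $B=1$'s, which degenerates to a point precisely when $\max_{k<0}A_{v[k,-1]}<\infty$ (equivalently, by regularity, $A\equiv1$ on a tail of the head).

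The embedding I would construct is the "nested orbit" map suggested by the solar-system picture: a convergent series
\begin{equation*}
\zeta(\mu)=\beta(\mu)+\rho(\mu)\,e^{2\pi i\,C_v(\mu)},
\end{equation*}
where $\beta(\mu)$ depends only on the discrete skeleton and is a continuous injection of skeletons into a planar Cantor set, built level-by-level from the data $(v_{k-1},B_{v_{k-1},v_k},i_k)$ with geometrically decaying radii $\rho_1>\rho_2>\cdots\to0$ (this is the "ad infinitum" nesting, which runs through infinitely many levels exactly when separators occur infinitely often), and where the last term attaches a circle of radius $\rho(\mu)\ge0$ (with $\rho(\mu)=0$ in the degenerate point case) carrying the head coordinate. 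Writing the head term as $e^{2\pi i C_v}$ is what makes $\zeta$ well defined on $\sim_{ae}$: it depends on the head only through the invariant $C_v$, and since $C_v$ takes equal values on the two digit-expansions identified in Proposition~\ref{ae_wrt_connected}, those dyadic-type identifications are automatically respected. Continuity would follow from uniform convergence of the series together with continuity of $\beta$, of $v\mapsto C_v$ (altering an edge at position $k$ moves $C_v$ by $O(1/A_{v[k,-1]})\to0$), and of $\rho$; injectivity would follow by decoding coarse-to-fine, the largest surviving orbit recovering the most recent skeleton symbol, and so on, with the residual recovering the head circle, thereby separating classes exactly as in Proposition~\ref{prop:connectediff}.

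The hard part will be the simultaneous choice of the radius schedule $\{\rho_m\}$ and of the circle-radius function $\rho(\mu)$, which must reconcile two competing demands. On one hand, distinct discrete skeletons must land in disjoint disks, which forces each $\rho_{m+1}$ (and $\rho(\mu)$) to be small compared with the minimal digit gap $1/A_{v[k,-1]}$ available at level $m$, so that finer orbits can never overwrite a coarser decision. On the other hand, the fibre changes topological type---circle versus point---on a dense set of base points (where $B=0$ ceases to occur and where $\max_{k<0}A_{v[k,-1]}$ passes between finite and infinite), and continuity of $\zeta$ there demands that $\rho(\mu)\to0$ as the first non-trivial ($A\ge2$) edge of the head recedes into the past, so that the circles over nearby base points shrink onto the limiting point-fibre in accordance with the quotient topology of $\Jj_{G_B,E_A}$. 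Producing a single "separation versus degeneration" inequality on the radii that honours both constraints is the crux; once it is in place, well-definedness, continuity, and injectivity reduce to routine verifications.
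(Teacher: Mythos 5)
Your first paragraph reproduces the paper's own (implicit) proof exactly: the paper observes, immediately before Theorem \ref{Main Embedding Thm}, that the out-split KEP model of $(G_{\xi},E)$ from Section \ref{Katsura models Putnam} has $A\in M_N(\{0,1,2\})$ and $B_{ij}=\max\{A_{ij}-1,0\}\in\{0,1\}$, that regularity holds since $(G_{\xi},E)$ is contracting and regular and out-splitting preserves these properties, and then transports the embedding of Theorem \ref{Main Embedding Thm} through the topological conjugacy of limit spaces. Your appended sketch of Theorem \ref{Main Embedding Thm} itself has a somewhat different shape from the paper's actual construction \eqref{embedding_map} (a series over \emph{all} maximal $B$-constant intervals with a rotation factor at every level, rather than a discrete skeleton plus a single terminal circle term), but since the corollary is entitled to cite that theorem as stated, this does not affect the correctness of your proof of the corollary.
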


To prove these results, we make several definitions in order to define the map $\zeta:\Jj_{G_B,E_A}\to \mathbb{C}$. First, we will assume $\|A\|_{\text{max}}=\max_{i,j} |a_{i,j}| > 1$, otherwise $\Jj_{G_{B},E_{A}}\simeq E_{A}^{-\infty}$ and it is a classical fact $E^{-\infty}_{A}$ embeds into $\mathbb{C}.$

Recall that we are working with infinite paths $\mu \in E_{A}^{-\infty}$ with edges labelled by negative integers $\mu= \ldots \mu_{-2} \mu_{-1}$.

For negative integers $m,n$ such that $m\leq n$, let $[m,n] = \{k\in\mathbb{Z}\mid m\leq k\leq n\}$, which we will call an \textit{interval}. We will also consider infinite intervals $[-\infty, n] = \{k\in\mathbb{Z}\mid k\leq n\}$. We will denote the collection of intervals by $\mathcal{I}$, and if $I\in\mathcal{I}$, then we will let $I^{-},I^{+}\in\mathbb{Z}\cup\{-\infty\}$ be such that $I = [I^{-}, I^{+}]$. 

For $e_{i,j,k} \in E^{1}_{A}$, we let $A(e_{i,j,k}) = A_{i,j}$, $B(e_{i,j,k}) = B_{i,j}$ and $\#(e_{i,j,k}) = k$. If $\mu \in E_{A}^{-\infty}$, then let $\mathcal{I}^{1}(\mu)$ be the collection of intervals $I$ such that $B(\mu_{j}) = 1$ for all $j\in I$, and is maximal with respect to this property. We will call an interval in $\mathcal{I}^{1}$ \textit{type 1}. Similarly, let $\mathcal{I}^{0}(\mu)$ denote the maximal intervals $I$ satisfying the property $B(\mu_{j}) = 0$ for all $j\in I$ and call these intervals \textit{type 0}. Then, $\mathcal{I}(\mu) = \mathcal{I}^{0}(\mu)\cup \mathcal{I}^{1}(\mu)$ is a collection of pairwise disjoint intervals such that $\bigcup_{I\in \mathcal{I}(\mu)}I = [-\infty, -1]$. 

We aim to define an embedding map $\zeta:E^{-\infty}_{A}\to\mathbb{C}$, which has several components. For $-n\in\mathbb{N}\cup\{\infty\}$ and  $\mu = \mu_{-n}...\mu_{-1} \in E_{A}^{n}$,  we adapt \eqref{path in matrix} by defining 
\[
A^0_{\mu} = \prod_{j=-1}^{-n} (A(\mu_{j})+1) \quad \text{ and } \quad A^1_{\mu} = \prod_{j=-1}^{-n} A(\mu_j),
\]
in order to define
\[
\theta^0(\mu) = \sum_{j=-1}^{-n}\frac{\#(\mu_{j})}{A^0_{\mu[j,-1]}} \quad \text{ and } \quad \theta^1(\mu) = \sum_{j=-1}^{-n}\frac{\#(\mu_{j})}{A^1_{\mu[j,-1]}}.
\]
Moreover, let $M=\|A\|_{\text{max}}=\max_{i,j} |a_{i,j}|$ and $R=M(N+1)$, and define
\[
\Omega(\mu) = \sum_{j=-1}^{-n} s(\mu_{j})R^{j} + r(\mu_{-n})R^{-(n+1)}.
\]

For an interval $I \subseteq [-\infty,-1]$ and $\mu \in E_{A}^{-\infty}$, let $\mu_{I} = \mu[I^-,I^+]$. Define $\zeta:E^{-\infty}_{A}\to\mathbb{C}$ by
\begin{equation}\label{embedding_map}
\zeta(\mu) = \\
\sum_{I\in\mathcal{I}^{0}(\mu)}R^{3I^+}\Omega(\mu_{I})e^{2\pi i \left(\theta^0(\mu_{I})\right)} + \sum_{I\in\mathcal{I}^{1}(\mu)}R^{3I^+}\Omega(\mu_{I})e^{2\pi i \left(\theta^0(\mu_{I})\right)}.
\end{equation}
The idea of breaking apart ``$A$-ary'' expansion along the intervals in $\mathcal{I}(\mu)$ was inspired by Putnam's construction of a metric for $\Jj_{\xi}$, where this idea appears in a more basic form. 

For $k < 0$ and $I\in\mathcal{I}$, denote $I\cap [-k,-1]=:I_{k}$. Observe that $\zeta$ is continuous, as it is the uniform limit of $(\zeta_{k}:E^{-\infty}_{A}\to \mathbb{C})_{k<0}$, defined for $ \mu \in E_{A}^{-\infty}$ as 
\begin{align*}
\zeta_{k}(\mu) = \sum_{I\in\mathcal{I}^{0}(\mu):\text{ } k\leq I^{+}}R^{3I^+}\Omega(\mu_{I_{k}})e^{2\pi i \left(\theta^0(\mu_{I_{k}})\right)}
+ \sum_{I\in\mathcal{I}^{1}(\mu):\text{ } k\leq I^{+}}R^{3I^+}\Omega(\mu_{I_{k}})e^{2\pi i \left(\theta^1(\mu_{I_{k}})\right)},
\end{align*}
which is continuous as it only depends on $\mu_{[k,-1]}$. 

Before continuing the proof, we consider an example that gives a feeling as to how the embedding works.

\begin{center}
\begin{figure}
\begin{tikzpicture}[scale=1.0]
\node at (0,0) {$2$};
\node[vertex] (vertexe) at (0,0)   {$\quad$}
	edge [->,>=Straight Barb,out=-30,in=30,loop] node[left,pos=0.5]{$\scriptstyle e_{2,2,0}$} (vertexe)
	edge [->,>=Straight Barb,out=-50,in=50,loop,looseness=10] node[right,pos=0.5]{$\scriptstyle e_{2,2,1}$} (vertexe);
\node at (-3,0) {$1$};
\node[vertex] (vertex-a) at (-3,0)   {$\quad$}
	edge [->,>=Straight Barb,out=20,in=160] node[below,swap,pos=0.5]{$\scriptstyle e_{2,1,0}$} (vertexe)
	edge [->,>=Straight Barb,out=45,in=135] node[below,swap,pos=0.5]{$\scriptstyle e_{2,1,1}$} (vertexe)
	edge [->,>=Straight Barb,out=70,in=110] node[above,swap,pos=0.5]{$\scriptstyle e_{2,1,2}$} (vertexe)
	edge [->,>=Straight Barb,out=210,in=150,loop] node[right,pos=0.5]{$\scriptstyle e_{1,1,1}$} (vertex-a)
	edge [->,>=Straight Barb,out=230,in=130,loop,looseness=10] node[left,pos=0.5]{$\scriptstyle e_{1,1,0}$} (vertex-a)
	edge [<-,>=Straight Barb,out=330,in=210] node[above,swap,pos=0.5]{$\scriptstyle e_{1,2,1}$} (vertexe)
	edge [<-,>=Straight Barb,out=300,in=240] node[below,swap,pos=0.5]{$\scriptstyle e_{1,2,0}$} (vertexe);
\end{tikzpicture}
\caption{The graph $E_A$ specified by the adjacency matrix $A$ from Example \ref{embedding_ex}.}
\label{Katsura_graph_embedding_ex}
\end{figure}
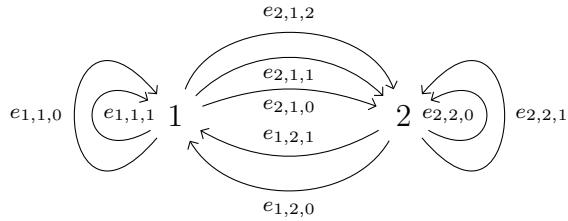
\end{center}

\begin{exm}\label{embedding_ex}
Let $(G_B,E_A)$ be the KEP-action defined by 
\begin{equation}\label{Katsura_matrices_rmkexm3}
A=\left(\begin{matrix} 2 & 2 \\ 3 & 2 \end{matrix}\right) \quad \text{ and } \quad B=\left(\begin{matrix} 1 & 1 \\ 0 & 1 \end{matrix}\right).
\end{equation}
Then $A$ is the adjacency matrix for the graph $E_A$ depicted in Figure \ref{Katsura_graph_embedding_ex}.

Notice that there are classical odometers at vertices $1$ and $2$, along with a countably infinite number of odometers with range $1$ and eventual source at vertex $2$. On the other hand, we only have the one odometer action with range $2$, since any groupoid element $a_2^m$ restricts to the unit $1$ through the edges $e_{2,1,i}$ for $i=0,1,2$.

Consider the collection of paths with each edge having range and source $1$,
\[
O_{1,1}=\{\ldots e_{1,1,{m_{-3}}} e_{1,1,{m_{-2}}} e_{1,1,{m_{-1}}} \mid  m_i \in \{0,1\} \text{ for all }i<0\}
\]
We have that $\mathcal{I}^{1}(\mu)=[-\infty,-1]$ for all $\mu \in O_{1,1}$ and, using $M=2$, $N=2$, and $R=M(N+1)=6$, we compute
\begin{align*}
\Omega(\mu) &=\sum_{j=-1}^{-\infty} 6^{j} s(\mu_j)=\sum_{j=1}^\infty \frac{1}{6^{j}}=\frac{1}{5} \quad \text{ and } \quad
\theta(\mu) =\sum_{j=-1}^{-\infty} 2^{j}\#(\mu_j)=\sum_{j=1}^\infty \frac{m_{-j}}{2^{j}}.
\end{align*}
Thus,
\begin{align*}
\zeta(\mu) &=\frac{1}{5 \cdot 6^3} e^{2\pi i(\theta(\mu) + 0)} = \frac{1}{1080} e^{2\pi i \theta(\mu)}
\end{align*}
Observe that $O_{1,1}$ is in bijective correspondence with binary representations of numbers in $[0,1]$, reading right to left, and two decimal expansions are equal exactly when the paths in $O_{1,1}$ are asymptotically equivalent. Thus, the image of $\zeta:O_{1,1} \to \CC$ is the circle centred at the origin with radius $1/1080$. Moreover, since $O_{1,1}$ is a classical odometer, $\mathcal{J}_{G_B,E_A}$ restricted to $O_{1,1}$ is a circle \cite[p.72]{Nekrashevych:Self-similar} and the embedding is bijective.

Similar computations show that
\[
O_{2,2}=\{\ldots e_{2,2,{m_{-3}}} e_{2,2,{m_{-2}}} e_{2,2,{m_{-1}}} \mid  m_i \in \{0,1\} \text{ for all }i<0\}
\]
maps to the circle centred at the origin with radius $1/540$. Moreover, consider
\[
O_{n}=\{\ldots e_{1,1,{m_{-n-2}}} e_{1,1,{m_{-n-1}}} e_{1,2,m_{-n}} e_{2,2,{m_{-n+1}}} \ldots e_{2,2,{m_{-1}}} \mid  m_i \in \{0,1\} \text{ for all } i<0\}.
\]
For $\nu \in O_n$ we compute
\begin{align*}
\Omega(\nu) &=\sum_{j=-1}^{-\infty} 6^{j}s(\nu_j)=\sum_{j=1}^n \frac{2}{6^{j}}+\sum_{j=n+1}^\infty \frac{1}{6^{j}}=\frac{2-\frac{1}{6^n}}{5}  \\
\end{align*}
Thus,
\begin{align*}
\zeta(\nu) &=\frac{2-\frac{1}{6^n}}{5\cdot 6^3} e^{2\pi i\theta(\nu)} =\left(\frac{1}{540}-\frac{1}{1080 \cdot 6^n}\right) e^{2\pi i\theta(\nu)}.
\end{align*}
So, for each $n \in \NN$ we have a circle centred at the origin of radius $\frac{1}{540}-\frac{1}{1080 \cdot 6^n}$.

\begin{center}
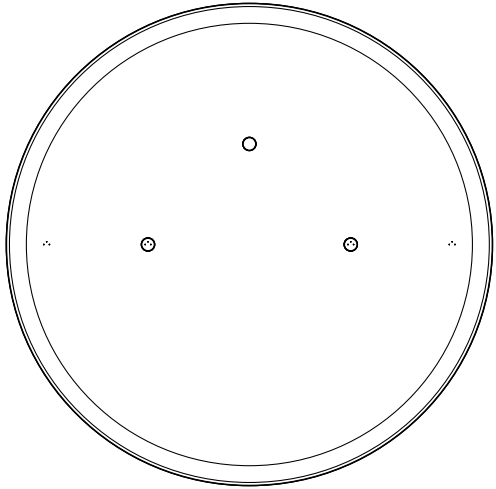
\begin{figure}
\begin{tikzpicture}[scale=8]
\begin{scope}[xshift=-6.5cm,yshift=-1.25cm]
\foreach \n/\nn in {1, 2, 3, 4,5} {
\draw (0,0) circle ({(1/(5))*(2-(1/6^\nn))});}
\foreach \n/\nn in {1, 2, 3, 4,5} {
\draw (1/6,0) circle ({(1/(36*5))*(2-(1/6^\nn))});}
\foreach \n/\nn in {1, 2, 3, 4,5} {
\draw (-1/6,0) circle ({(1/(36*5))*(2-(1/6^\nn))});}
\foreach \n/\nn in {1, 2, 3, 4,5} {
\draw (0,1/6) circle ({(1/(36*5))*(2-(1/6^\nn))});}
\foreach \n/\nn in {1, 2, 3, 4,5} {
\draw ({1/6+1/216},0) circle ({(1/(36*36*5))*(2-(1/6^\nn))});}
\foreach \n/\nn in {1, 2, 3, 4,5} {
\draw (1/6,1/216) circle ({(1/(36*36*5))*(2-(1/6^\nn))});}
\foreach \n/\nn in {1, 2, 3, 4,5} {
\draw ({1/6-1/216},0) circle ({(1/(36*36*5))*(2-(1/6^\nn))});}
\foreach \n/\nn in {1, 2, 3, 4,5} {
\draw ({-1/6+1/216},0) circle ({(1/(36*36*5))*(2-(1/6^\nn))});}
\foreach \n/\nn in {1, 2, 3, 4,5} {
\draw (-1/6,1/216) circle ({(1/(36*36*5))*(2-(1/6^\nn))});}
\foreach \n/\nn in {1, 2, 3, 4,5} {
\draw ({-1/6-1/216},0) circle ({(1/(36*36*5))*(2-(1/6^\nn))});}
\foreach \n/\nn in {1, 2, 3, 4,5} {
\draw ({2/6+1/216},0) circle ({(1/(36*36*5))*(2-(1/6^\nn))});}
\foreach \n/\nn in {1, 2, 3, 4,5} {
\draw (2/6,1/216) circle ({(1/(36*36*5))*(2-(1/6^\nn))});}
\foreach \n/\nn in {1, 2, 3, 4,5} {
\draw ({2/6-1/216},0) circle ({(1/(36*36*5))*(2-(1/6^\nn))});}
\foreach \n/\nn in {1, 2, 3, 4,5} {
\draw ({-2/6+1/216},0) circle ({(1/(36*36*5))*(2-(1/6^\nn))});}
\foreach \n/\nn in {1, 2, 3, 4,5} {
\draw (-2/6,1/216) circle ({(1/(36*36*5))*(2-(1/6^\nn))});}
\foreach \n/\nn in {1, 2, 3, 4,5} {
\draw ({-2/6-1/216},0) circle ({(1/(36*36*5))*(2-(1/6^\nn))});}
\end{scope}
\end{tikzpicture}
\caption{The embedding $\zeta:\mathcal{J}_{G_B,E_A} \to \mathbb{C}$ for Example \ref{embedding_ex}. The outer circles are centred at the origin with radius $\frac{1}{540}-\frac{1}{1080 \cdot 6^n}$.The other visible circles are scaled copies of the outer circles and continue ad infinitum.}
\label{Katsura_graph_embedding_ex2}
\end{figure}
\end{center}

Now, we consider a collection that does not give a circle centred at the origin. Consider the collection
\[
\mathcal{P}= \{\nu e_{2,1,m_{-1}} \mid m_{-1} \in \{0,1,2\} \text{ and } \nu \in \mathcal{O}_{n} \}.
\]
For $\eta \in \mathcal{P}$ we compute
\[
\Omega(\eta_{[-1,-1]}) =\frac{1}{6} \quad \text{ and } \quad
\theta(\eta_{[-1,-1]}) = \frac{\#(\mu_{-1})}{3+1} =\{0,1/4,1/2\}.
\]
Thus, the image of $\zeta(\mathcal{P})$ consists of circles of radius $\frac{1}{6^3}\left(\frac{1}{540}-\frac{1}{1080 \cdot 6^n}\right)$ centred at $\frac{1}{6^4}$, $\frac{i}{6^4}$, and $\frac{-1}{6^4}$.

See Figure \ref{Katsura_graph_embedding_ex2} for the visible image of $\zeta:\mathcal{J}_{G_B,E_A} \to \mathbb{C}$. \qed
\end{exm}

We must show that for $\mu, \nu \in E_{A}^{-}$, we have $\zeta(\mu) = \zeta(\nu)$ if and only if $\mu \sim_{ae} \nu$. One direction is easy.

\begin{prp}
\label{easy}
Let $(G_B,E_A)$ be a regular KEP-action such that $B\in M_N(\{0,1\})$.  If $\mu, \nu \in E_{A}^{-\infty}$ satisfy $\mu \sim_{ae} \nu$, then $\zeta(\mu) = \zeta(\nu)$.
\end{prp}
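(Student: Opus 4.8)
The plan is to reduce the statement to the combinatorial description of asymptotic equivalence already established in Propositions \ref{prop:connectediff} and \ref{ae_wrt_connected}, and then to verify that every summand of the series \eqref{embedding_map} is unchanged when $\mu$ is replaced by an asymptotically equivalent path $\nu$. If $\mu=\nu$ there is nothing to prove, so I would assume $\mu\sim_{ae}\nu$ with $\mu\neq\nu$. Since $\sim_{ae}$ is contained in the weaker relation $\sim_{e}$, Proposition \ref{prop:connectediff} applies and produces an index $K<0$ with $s(\mu_k)=s(\nu_k)=:v_k$ and $B_{v_{k-1},v_k}=1$ for all $k\leq K$, while $B_{v_K,v_{K+1}}=0$ and the finite blocks $\mu_{K+1}\cdots\mu_{-1}$ and $\nu_{K+1}\cdots\nu_{-1}$ coincide. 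In particular $\mu$ and $\nu$ share the same vertex sequence, and they differ only along the infinite, purely type-$1$ tail $I_\infty=[-\infty,K]$, which is a maximal type-$1$ interval with $I_\infty^{+}=K$.

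First I would note that the interval decomposition is an invariant of the vertex sequence: because $B(\mu_j)=B_{v_{j-1},v_j}$ depends only on the $v_k$, we have $\mathcal{I}^{0}(\mu)=\mathcal{I}^{0}(\nu)$ and $\mathcal{I}^{1}(\mu)=\mathcal{I}^{1}(\nu)$, so the two series in \eqref{embedding_map} are indexed by the same intervals. Moreover the weights $R^{3I^{+}}$ agree interval-by-interval, and since $\Omega$ is a function of the range and source vertices alone, $\Omega(\mu_I)=\Omega(\nu_I)$ for every $I$. For each interval $I\neq I_\infty$, that is, each interval contained in $[K+1,-1]$, the edges themselves coincide, $\mu_I=\nu_I$, so the associated phase (carrying $\theta^{0}$ on type-$0$ and $\theta^{1}$ on type-$1$ intervals, as in the definition of $\zeta_k$) is literally equal for $\mu$ and $\nu$. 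Hence the difference $\zeta(\mu)-\zeta(\nu)$ collapses to the single pair of terms coming from $I_\infty$, and it remains to establish
\[
R^{3K}\Omega(\mu_{I_\infty})e^{2\pi i\theta^{1}(\mu_{I_\infty})}=R^{3K}\Omega(\nu_{I_\infty})e^{2\pi i\theta^{1}(\nu_{I_\infty})}.
\]

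The heart of the matter, and the only genuinely non-trivial step, is this tail term. Here $\Omega(\mu_{I_\infty})=\Omega(\nu_{I_\infty})$ again by the coincidence of vertices, so everything hinges on the phases $e^{2\pi i\theta^{1}(\mu_{I_\infty})}$. The restrictions of $\mu$ and $\nu$ to $I_\infty$ lie in the fibre $\mathcal{A}_{v}$ of the tail vertex sequence, and comparing definitions shows that $\theta^{1}(\mu_{I_\infty})$ is exactly the canonical representative in $[0,1)$ of the odometer coordinate $C_{v}$ of that tail: since $A(\mu_l)=A_{v_{l-1},v_l}$, the products $A^{1}$ appearing in $\theta^{1}$ are precisely the products defining $C_{v}$. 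Restricting the (finite-set-valued) witnessing sequence for $\mu\sim_{ae}\nu$ to the first $K-n+1$ edges shows that the tails are themselves asymptotically equivalent in $\mathcal{A}_{v}$, so Proposition \ref{ae_wrt_connected} yields $C_{v}(\mu_{I_\infty})=C_{v}(\nu_{I_\infty})$ in $\mathbb{T}=\mathbb{R}/\mathbb{Z}$, i.e.\ $\theta^{1}(\mu_{I_\infty})\equiv\theta^{1}(\nu_{I_\infty})\pmod 1$; exponentiating gives equal phases and hence equal tail terms. I expect the main obstacle to be precisely this identification of $\theta^{1}$ on the infinite tail with $C_{v}$, together with the degenerate case $\max_{k<0}A_{v[k,-1]}<\infty$, in which regularity forces the relevant entries of $A$ to equal $1$, the fibre $\mathcal{A}_{v}$ reduces to a single point, and the two tails are forced to be equal outright.
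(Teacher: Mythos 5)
Your proposal is correct and follows essentially the same route as the paper's proof: reduce via the characterisation of $\sim_{ae}$ (Propositions \ref{prop:connectediff} and \ref{ae_wrt_connected}) to paths sharing a vertex sequence and differing only on an infinite type-$1$ tail, observe that the interval decomposition, the weights $R^{3I^+}$, the $\Omega$-terms, and the phases on all finite intervals coincide, and handle the tail phase via the identification $C_{v} = \theta^{1}|_{\mathcal{A}_{v}}$ (mod $1$). Your explicit treatment of the $\bmod\ 1$ subtlety and the degenerate case $\max_{k<0}A_{v[k,-1]}<\infty$ only makes more careful what the paper compresses into a single sentence.
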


\begin{proof}
Proposition \ref{ae_wrt_connected} shows that $\mu \sim_{ae} \nu$  for $\mu\neq \nu$ if and only if there is $k<0$ and $I = [-\infty, k] \in \mathcal{I}^{1}(\mu)\cap\mathcal{I}^{1}(\nu)$ such that $\mu_{[k+1,-1]} = \nu_{[k+1,-1]}$, $s(\mu_{j}) = s(\nu_{j})=:v_{j}$, $B_{v_{j-1}, v_{j}} = 1 $ for all $j\leq k$, and $C_{v}(\mu_{[-\infty,k]}) = C_{v}(\nu_{[-\infty,k]})$.

Since $\mu_{[k+1,-1]} = \nu_{[k+1,-1]}$ and $s(\mu_{j}) = s(\nu_{j})$ for all $j\leq k$, we have that $\mathcal{I}^{0}(\mu) = \mathcal{I}^{0}(\nu)$, $\mathcal{I}^{1}(\mu) = \mathcal{I}^{1}(\nu)$, and $\Omega(\mu_{I}) = \Omega(\nu_{I})$ for all $I\in \mathcal{I}(\mu)$, as well as $\theta^i(\mu_{I}) = \theta^i(\nu_{I})$ for all $I\in\mathcal{I}^{i}(\mu)\setminus [-\infty, k]$, for $i\in \{0,1\}$. The equality $\theta^1(\mu_{[-\infty, k]}) = \theta^1(\nu_{[-\infty, k]})$ follows from $C_{v} = \theta^{1}|_{\mathcal{A}_{v}}$ and the above paragraph. All the above equalities then imply $\zeta(\mu) = \zeta(\nu)$.
\end{proof}

To prove the converse direction requires a more careful analysis that we undertake through a series of lemmas.

\begin{lem}
\label{lem:0}
Let $(G_B,E_A)$ be a regular KEP-action such that $B\in M_N(\{0,1\})$. Suppose $\mu, \nu$ are in $E^{-\infty}_{A}$ and let $J \in \mathcal{I}(\mu)$ and $K \in \mathcal{I}(\nu)$ be the intervals containing $-1$, respectively. If $J^- \leq K^-$, $-\infty< K^-$ and $\Omega(\mu_{K})\neq \Omega(\nu_{K})$, then $\zeta(\mu) \neq \zeta(\nu)$.
\end{lem}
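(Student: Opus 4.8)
The plan is to isolate the contribution of the rightmost intervals $J$ and $K$ to $\zeta(\mu)$ and $\zeta(\nu)$ and to show that these leading terms already differ in \emph{modulus} by more than all the remaining terms can possibly compensate. Since $J$ and $K$ both have right endpoint $-1$, I would write $\zeta(\mu)=R^{-3}\Omega(\mu_J)e^{2\pi i\theta(\mu_J)}+E_\mu$ and $\zeta(\nu)=R^{-3}\Omega(\nu_K)e^{2\pi i\theta(\nu_K)}+E_\nu$, where $\theta$ denotes the appropriate $\theta^0$ or $\theta^1$ according to the type of the interval, and $E_\mu,E_\nu$ collect the terms of \eqref{embedding_map} coming from the intervals strictly to the left of $J$ and $K$. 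The first (routine) step is to bound the tails: each $\Omega(\eta)$ is a nonnegative real with $\Omega(\eta)\le N/(R-1)<\tfrac12$ because $R=M(N+1)$ and $M\ge 2$, while the right endpoints of the remaining intervals are distinct integers $\le J^--1$ (resp. $\le K^--1$). Summing the geometric series in the scales $R^{3I^+}$ and using $J^-\le K^-$ yields $|E_\mu|+|E_\nu|\le R^{3(K^--1)}/(1-R^{-3})$.

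The crux is a lower bound on $|\Omega(\mu_J)-\Omega(\nu_K)|$. Here I would treat $\Omega(\mu_J),\Omega(\mu_K),\Omega(\nu_K)$ as base-$R$ expansions whose digits are the vertices $s(\cdot),r(\cdot)\in\{1,\dots,N\}$; these are genuine digits since $N<R$. Comparing the defining sums shows $\Omega(\mu_J)-\Omega(\mu_K)$ is supported on exponents $\le K^--1$, so $\Omega(\mu_J)$ and $\Omega(\mu_K)$ share every digit at exponents $\ge K^-$. Let $-a$ be the highest exponent at which $\Omega(\mu_K)$ and $\Omega(\nu_K)$ disagree; it exists by hypothesis and satisfies $1\le a\le |K|+1$, where $|K|=-K^-$ is the length of $K$. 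If $a\le |K|$, then $\Omega(\mu_J)$ still carries $\Omega(\mu_K)$'s digits at all exponents $\ge -a$, so $\Omega(\mu_J)$ and $\Omega(\nu_K)$ agree above $-a$ and differ at $-a$; the standard ``no carrying'' estimate (digit gap $\ge 1$, tail $\le (N-1)\sum_{p<-a}R^{p}$, controlled since $(N-1)/(R-1)<\tfrac12$) gives $|\Omega(\mu_J)-\Omega(\nu_K)|\ge \tfrac12 R^{-a}\ge \tfrac12 R^{-(|K|+1)}$.

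The delicate case is $a=|K|+1$, where $\Omega(\mu_K)$ and $\Omega(\nu_K)$ differ only in the bottom (range-vertex) digit at exponent $K^--1$, precisely the exponent where the correction $\Omega(\mu_J)-\Omega(\mu_K)$ also lives, so I cannot quote the previous estimate. I would resolve it by converting the range disagreement $r(\mu_{K^-})\ne r(\nu_{K^-})$ into a source disagreement at position $K^--1$, via $r(\mu_{K^-})=s(\mu_{K^--1})$ and $r(\nu_{K^-})=s(\nu_{K^--1})$. If $J^-<K^-$, then position $K^--1$ lies inside $J$, so $\Omega(\mu_J)$ carries the digit $s(\mu_{K^--1})$ at exponent $K^--1$, which differs from $\Omega(\nu_K)$'s digit there while all higher digits agree, yielding $|\Omega(\mu_J)-\Omega(\nu_K)|\ge \tfrac12 R^{-(|K|+1)}$. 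If instead $J^-=K^-$, then $\Omega(\mu_J)=\Omega(\mu_K)$ outright and the single bottom-digit difference gives $|\Omega(\mu_J)-\Omega(\nu_K)|\ge R^{-(|K|+1)}$. In all cases $|\Omega(\mu_J)-\Omega(\nu_K)|\ge \tfrac12 R^{-(|K|+1)}$.

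To conclude, I would suppose $\zeta(\mu)=\zeta(\nu)$ and apply the reverse triangle inequality to $R^{-3}\bigl(\Omega(\mu_J)e^{2\pi i\theta(\mu_J)}-\Omega(\nu_K)e^{2\pi i\theta(\nu_K)}\bigr)=E_\nu-E_\mu$, using that the two $\Omega$-values are nonnegative reals, to obtain $R^{-3}|\Omega(\mu_J)-\Omega(\nu_K)|\le |E_\mu|+|E_\nu|$. Substituting the two bounds reduces this to $\tfrac12 R^{\,2|K|-1}(1-R^{-3})\le 1$, which is false for $|K|\ge 1$ and $R\ge 4$; this contradiction proves $\zeta(\mu)\ne\zeta(\nu)$. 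I expect the bookkeeping in the case $a=|K|+1$ — tracking exactly which digit of $\Omega(\mu_J)$ occupies exponent $K^--1$ under the two sub-cases $J^-<K^-$ and $J^-=K^-$ — to be the main obstacle, since it is the only point where the correction term $\Omega(\mu_J)-\Omega(\mu_K)$ competes at the same scale as the detected difference; the remaining steps are geometric-series estimates.
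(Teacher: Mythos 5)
Your proof is correct, and it shares the paper's overall skeleton --- isolate the rightmost-interval contributions, bound everything to the left by a geometric series in the scales $R^{3I^+}$, and use $|wp-zq|\ge |p-q|$ for unimodular $w,z$ and nonnegative reals $p,q$ --- but your decomposition differs at the key point, and the comparison is instructive. The paper writes $R^{3}\zeta(\mu)=w\,\Omega(\mu_{K})+r$, i.e.\ it truncates $\mu$'s leading interval $J$ down to $\nu$'s interval $K$ (legitimate since $J^{-}\le K^{-}$ forces $K\subseteq J$). Because $r(\mu_{K^{-}})=s(\mu_{K^{-}-1})$, the truncation error $\Omega(\mu_{J})-\Omega(\mu_{K})$ telescopes and is supported at exponents $\le K^{-}-2$, so it is absorbed into $r$ with $|r|\le NR^{K^{-}-1}/(R-1)$; the separation then becomes a one-liner: $R^{-(K^{-}-1)}\Omega(\mu_{K})$ and $R^{-(K^{-}-1)}\Omega(\nu_{K})$ are distinct nonnegative integers in base $R$ (digits in $\{1,\dots,N\}$ with $N<R$), whence $|\Omega(\mu_{K})-\Omega(\nu_{K})|\ge R^{K^{-}-1}$ directly from the hypothesis. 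You instead keep $\Omega(\mu_{J})$ as the leading term and must prove $|\Omega(\mu_{J})-\Omega(\nu_{K})|\ge \tfrac12 R^{K^{-}-1}$ by digit bookkeeping; your ``delicate case'' $a=|K|+1$, which you resolve with the same identity $r(\mu_{K^{-}})=s(\mu_{K^{-}-1})$, is precisely the work the paper's truncation trick avoids --- indeed that identity gives support of $\Omega(\mu_{J})-\Omega(\mu_{K})$ at exponents $\le K^{-}-2$, slightly sharper than the $\le K^{-}-1$ you state. Two cosmetic points, neither damaging: your tail estimate should read $N\sum_{p<-a}R^{p}$ rather than $(N-1)\sum_{p<-a}R^{p}$, since one expansion may carry digit $0$ where the other carries up to $N$; this still yields tail $<\tfrac12 R^{-a}$ because $N/(R-1)<\tfrac12$ for $R=M(N+1)$, $M\ge 2$. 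And your reading of \eqref{embedding_map} with $\theta^{1}$ on type-$1$ intervals is the intended one (the displayed formula repeats $\theta^{0}$, evidently a typo, compare the definition of $\zeta_{k}$), though for this lemma only moduli matter, so the phases are irrelevant in any case. Your closing numerics are sound: $\tfrac12(1-R^{-3})R^{2|K|-1}\le 1$ indeed fails for $|K|\ge 1$ and $R\ge 4$, matching the paper's positivity computation $1-2N/(R-1)>0$.
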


\begin{proof}
We can write $R^{3}\zeta(\mu) = w\Omega(\mu_{K}) + r$ and $R^{3}\zeta(\nu) = z\Omega(\nu_{K}) + s$ for some $w,z \in\mathbb{T}$ and $r,s\in\mathbb{C}$ such that $|r|, |s| \leq \frac{R^{(K^{-}-1)}N}{(R - 1)}$. For any $0 < p,q\in\mathbb{R}$, we have $|wp - zq|\geq |p-q|$. It follows that 
\begin{align*}
R^{3}|\zeta(\mu) &- \zeta(\nu)| \geq |\Omega(\mu_{K}) - \Omega(\nu_{K})| - \frac{R^{(K^{-}-1)}2N}{(R - 1)} \\
&\geq R^{(K^{-}-1)} - \frac{R^{(K^{-}-1)}2N}{(R - 1)} = R^{(K^{-}-1)}\left(1 - \frac{2N}{R-1}\right) =:(*).
\end{align*}
 As $R=M(N+1)$ and $M \geq 2$, we have 
 \begin{align*}
& (*) = R^{(K^{-}-1)}\left(1 - \frac{2}{M}\frac{1}{1 + \frac{1}{N} - \frac{1}{MN}}\right)\geq R^{(K^{-}-1)}\left(1 - \frac{1}{1 + \frac{1}{N} - \frac{1}{MN}}\right) > 0. \qedhere
\end{align*}
\end{proof}

\begin{lem}
\label{lem:1}
Let $(G_B,E_A)$ be a regular KEP-action such that $B\in M_N(\{0,1\})$. Suppose $\mu, \nu$ are in $E^{-\infty}_{A}$ and let $J \in \mathcal{I}(\mu)$ and $K \in \mathcal{I}(\nu)$ be the intervals containing $-1$, respectively. If $J^- < K^-$, then $\zeta(\mu) \neq \zeta(\nu)$.
\end{lem}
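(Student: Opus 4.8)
The plan is to reduce to Lemma~\ref{lem:0} whenever it applies and to handle the one degenerate case it misses by a direct estimate of the leading terms. Since $J^-<K^-$ we automatically have $-\infty<K^-\le -1$ and $J^-\le K^--1$, so the hypotheses ``$J^-\le K^-$'' and ``$-\infty<K^-$'' of Lemma~\ref{lem:0} are met. Hence, if $\Omega(\mu_K)\neq\Omega(\nu_K)$, Lemma~\ref{lem:0} gives $\zeta(\mu)\neq\zeta(\nu)$ at once. The whole difficulty is therefore concentrated in the case $\Omega(\mu_K)=\Omega(\nu_K)$, where Lemma~\ref{lem:0} is unavailable because the two leading $\Omega$-values over the short interval $K$ coincide.

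In that case I would first isolate how much longer $J$ is than $K$. A direct computation gives
\[
\Omega(\mu_J)-\Omega(\mu_K)=\sum_{j=J^-}^{K^--2}s(\mu_j)R^{\,j}+r(\mu_{J^-})R^{\,J^--1}=:D,
\]
the apparent term at $j=K^--1$ cancelling against $-r(\mu_{K^-})R^{\,K^--1}$ because $r(\mu_{K^-})=s(\mu_{K^--1})$ by consecutiveness of the edges. Every summand of $D$ is positive and the coarsest one lives at scale $R^{\,K^--2}$ (it is the $j=K^--2$ term, or, when $J^-=K^--1$, the term $r(\mu_{J^-})R^{\,J^--1}$), so $D\ge R^{\,K^--2}$. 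Together with $\Omega(\mu_K)=\Omega(\nu_K)$ this yields the crucial lower bound $\Omega(\mu_J)-\Omega(\nu_K)=D\ge R^{\,K^--2}>0$.

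To finish I would isolate the leading intervals of both paths. Writing $\zeta(\mu)=R^{-3}\Omega(\mu_J)e^{2\pi i\theta(\mu_J)}+\mathrm{rest}_\mu$ and $\zeta(\nu)=R^{-3}\Omega(\nu_K)e^{2\pi i\theta(\nu_K)}+\mathrm{rest}_\nu$ (each phase being the appropriate $\theta^0$ or $\theta^1$, which plays no role beyond having modulus one), the remainders run over intervals with right endpoint at most $J^--1$, respectively $K^--1$; the uniform bound $\Omega(\,\cdot\,)<N/(R-1)$ and a geometric series then give $|\mathrm{rest}_\mu|\le C'R^{\,3J^--3}$ and $|\mathrm{rest}_\nu|\le C'R^{\,3K^--3}$ with $C'=N/\big((R-1)(1-R^{-3})\big)$. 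Using $|ae^{i\alpha}-be^{i\beta}|\ge|a-b|$ for $a,b\ge 0$ I obtain
\[
|\zeta(\mu)-\zeta(\nu)|\ge R^{-3}D-C'R^{\,3J^--3}-C'R^{\,3K^--3}\ge R^{\,K^--5}-C'R^{\,3K^--3}-C'R^{\,3J^--3}.
\]
Because $K^-\le -1$ forces $3K^--3\le K^--5$, and $3J^--3$ is smaller still, factoring out $R^{\,K^--5}$ reduces positivity to $1-C'(1+R^{-3})>0$, which holds since $R=M(N+1)\ge 2(N+1)$ makes $C'<\tfrac12$.

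The main obstacle is exactly this degenerate case. The delicate point is that the second interval of $\nu$ feeds $\mathrm{rest}_\nu$ at scale $R^{\,3(K^--1)}$, which for $K^-=-1$ coincides with the scale $R^{-6}$ of the signal $R^{-3}D$; the separation then survives only thanks to the slack in $C'<1$, and this is where the normalisation $R=M(N+1)$ earns its keep. I expect the genuine care to lie in the bookkeeping of which scale dominates and, above all, in noticing that the cancellation in $D$ leaves a term of size $\ge R^{\,K^--2}$ rather than the naive $R^{\,J^--1}$ — without that observation the estimate fails whenever $J$ is much longer than $K$.
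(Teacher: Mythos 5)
Your proof is correct and takes essentially the same route as the paper's: reduce via Lemma \ref{lem:0} to the case $\Omega(\mu_K)=\Omega(\nu_K)$, lower-bound the gap $\Omega(\mu_J)-\Omega(\nu_K)\ge R^{K^{-}-2}$ produced by the telescoping cancellation at $j=K^{-}-1$, and beat the geometric tails using $R=M(N+1)$ with $M\ge 2$. The only cosmetic difference is that the paper concludes by showing $|\zeta(\mu)|>|\zeta(\nu)|$ outright with the tail bound $N R^{3I^{-}}/(R-1)$ and gap bound $R^{K^{-}-1}/\bigl(2(R-1)\bigr)$, whereas you estimate $|\zeta(\mu)-\zeta(\nu)|$ directly via $|ae^{i\alpha}-be^{i\beta}|\ge |a-b|$ with the slightly sharper tail constant $C'=N/\bigl((R-1)(1-R^{-3})\bigr)$ — the two verifications are arithmetically equivalent.
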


\begin{proof}
If $\Omega(\mu_{K}) \neq \Omega(\nu_{K})$, then Lemma \ref{lem:0} implies $\zeta(\mu) \neq \zeta(\nu)$, so assume $\Omega(\mu_{K}) = \Omega(\nu_{K})$. Therefore, $J$ and $K$ are of the same type. We denote this type by $i \in \{0,1\}$. We have 
\[
R^{3}|\zeta(\mu)| \geq \left|\Omega(\mu_{J})e^{2\pi i \left(\theta^i(\mu_{J})\right)}\right| - R^{3J^{-}}\sum^{\infty}_{j=1}\frac{N}{R^{j}} = \Omega(\mu_{J}) - \frac{R^{3J^{-}}N}{R-1}
\]
and 
\[
R^{3}|\zeta(\nu)| \leq \left|\Omega(\nu_{K})e^{2\pi i \left(\theta^i(\nu_{K})\right)}\right| + R^{3K^{-}}\sum^{\infty}_{j=1}\frac{N}{R^{j}} = \Omega(\nu_{K}) + \frac{R^{3K^{-}}N}{R-1}
\]
Therefore, 
\[
R^3|\zeta(\mu)|-R^{3}|\zeta(\nu)| \geq \Omega(\mu_{J})-\Omega(\nu_{K})-\frac{N}{R-1}\left(R^{3J^{-}} + R^{3K^{-}} \right)
\]
Since $\Omega(\mu_{K}) = \Omega(\nu_{K})$, we have 
\[
\Omega(\mu_{J}) - \Omega(\nu_{K}) \geq \sum_{j=-K^{-} +2}^{-J^{-}+1}\frac{1}{R^{j}} \geq \frac{R^{K^{-}-1}}{2(R-1)},
\]
so to show $|\zeta(\mu)| > |\zeta(\nu)|$, it suffices to show 
\[
\frac{R^{K^{-}-1}}{2(R-1)} > \frac{N}{R-1}\left(R^{3J^{-}} + R^{3K^{-}} \right).
\]
Or equivalently, by dividing the above inequality by the left hand side, show $1 > 2N(R^{3J^{-} - K^{-}+1} + R^{2K^{-}+1}).$ Using $3J^{-} - K^{-} + 1\leq -3$ and $K^{-}\leq -1$, we have $2N(R^{-3} + R^{-1})\geq 2N(R^{3J^{-} - K^{-}+1} + R^{2K^{-}+1})$, so it suffices to show $R^{3} > 2N(1 +R^{2})$. Using $R = M(N+1)$,  and $M\geq 2$, we have
\[R^{3} = MNR^{2} + R^{2} \geq 2NR^{2} + R^{2} > 2NR^{2} + 2N = 2N(1 + R^{2}). \qedhere\]
\end{proof}

\begin{lem}
\label{lem:4}
Let $(G_B,E_A)$ be a regular KEP-action such that $B\in M_N(\{0,1\})$. Suppose $\mu \neq \nu$ are in $E^{-\infty}_{A}$ such that $\zeta(\mu)=\zeta(\nu)$. Let $j<0$ be the largest number such that $\mu_j \neq \nu_j$ and let $J_{1}(\mu) \in \mathcal{I}(\mu)$ and $J_{1}(\nu) \in \mathcal{I}(\nu)$ be the intervals containing $j$. Then $J_{1}(\mu)^+=J_{1}(\nu)^+$. 
\end{lem}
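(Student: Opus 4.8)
The plan is to prove the statement by contradiction, isolating the only genuinely non-trivial configuration and then reducing it to Lemma~\ref{lem:1} after a shift. First I would record the combinatorics near $j$. Since $j$ is the largest index with $\mu_j\neq\nu_j$, we have $\mu_k=\nu_k$, and in particular $B(\mu_k)=B(\nu_k)$, for every $k>j$. Two easy cases dispose of most possibilities: if $j=-1$ then both $J_1(\mu)$ and $J_1(\nu)$ contain $-1$, so $J_1(\mu)^+=J_1(\nu)^+=-1$; and if $j<-1$ with $B(\mu_j)=B(\nu_j)$, then $B(\mu_k)=B(\nu_k)$ for all $k\ge j$, so the maximal constant-type runs containing $j$ coincide and again $J_1(\mu)^+=J_1(\nu)^+$. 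The whole content of the lemma is therefore to rule out the case $j<-1$ with $B(\mu_j)\neq B(\nu_j)$.

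So suppose $j<-1$ and $B(\mu_j)\neq B(\nu_j)$. As these are the two distinct values of $\{0,1\}$ while $B(\mu_{j+1})=B(\nu_{j+1})$, exactly one of $B(\mu_j),B(\nu_j)$ equals $B(\mu_{j+1})$; after relabelling $\mu\leftrightarrow\nu$ I may assume $B(\mu_j)=B(\mu_{j+1})$. Then $J_1(\nu)^+=j$, whereas $J_1(\mu)$ contains $j+1$ and, because $\mu$ and $\nu$ agree above $j$, terminates at the same index $c:=J_1(\mu)^+\ge j+1$ as the interval $I_\nu\in\mathcal I(\nu)$ containing $j+1$; thus $I_\nu=[j+1,c]$, and $c$ is a right endpoint of an interval for both $\mu$ and $\nu$.

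The key step is a self-similarity of $\zeta$ under the shift. Because $\Omega$ and both $\theta^0,\theta^1$ depend only on the internal reindexing of a subpath, not on its absolute position, cutting a path at an index $c$ that is an interval right endpoint splits no interval, and under $\sigma^m$ with $m=-c-1\ge 0$ every interval lying in $[-\infty,c]$ keeps its $\Omega$- and $\theta$-values while its weight $R^{3I^+}$ is rescaled by the single factor $R^{3m}$. Hence for any path $\eta$ having $c$ as an interval right endpoint, $\zeta(\eta)$ splits as a top part, determined only by $\eta[c+1,-1]$, plus $R^{3(c+1)}\zeta(\sigma^m\eta)$. Applying this to $\mu$ and $\nu$, which agree on $[c+1,-1]$ and share the boundary $c$, the top parts coincide, so $\zeta(\mu)=\zeta(\nu)$ forces $\zeta(\sigma^m\mu)=\zeta(\sigma^m\nu)$.

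Finally I would compare the intervals of $\sigma^m\mu$ and $\sigma^m\nu$ containing $-1$: these are the images of $J_1(\mu)$ and $I_\nu$, with left endpoints $J_1(\mu)^--c-1$ and $(j+1)-c-1=j-c$. Since $J_1(\mu)^-\le j$ we get $J_1(\mu)^--c-1\le j-c-1<j-c$, so Lemma~\ref{lem:1} yields $\zeta(\sigma^m\mu)\neq\zeta(\sigma^m\nu)$, contradicting the previous paragraph. This rules out the remaining case and proves $J_1(\mu)^+=J_1(\nu)^+$. The main obstacle is making the shift self-similarity precise: one must check that $\Omega$ and $\theta^i$ are genuinely position-independent and that cutting exactly at the boundary $c$ leaves every interval intact, so that the weights rescale by the single factor $R^{3m}$; the remaining endpoint bookkeeping is routine. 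A fully self-contained variant avoids the shift and instead estimates $\zeta(\mu)-\zeta(\nu)$ directly: the two scale-$R^{3c}$ terms differ in modulus by at least $\Omega(\mu_{J_1(\mu)})-\Omega(\nu_{I_\nu})\ge R^{-(c-j+2)}$, which dominates the $O(R^{3j})$ tail because $R=M(N+1)$ with $M\ge 2$ forces $\tfrac{N}{R-1}\cdot\tfrac{R^3+1}{R^3-1}<1$.
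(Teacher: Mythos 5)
Your proof is correct and runs on the same engine as the paper's: split $\zeta$ along the self-similarity $\zeta(\eta)=(\text{head determined by }\eta[c+1,-1])+R^{3(c+1)}\zeta(\sigma^{-c-1}(\eta))$, valid whenever the cut $c$ is an interval right endpoint of $\eta$, and then apply Lemma \ref{lem:1} to the shifted pair. The difference is in the bookkeeping, and yours is a genuine tightening. The paper cuts at the smallest \emph{common interval left endpoint} $k>j$ and then runs a three-case analysis on the position of $j-k$ relative to the intervals containing $-1$, disposing of the bad cases via Lemmas \ref{lem:0} and \ref{lem:1}; you instead observe up front that the lemma is immediate unless $j<-1$ and $B(\mu_j)\neq B(\nu_j)$ (since the right endpoint of the run through $j$ depends only on $B$-values at indices $\geq j$, which then agree), and in the one remaining case you exhibit the explicit common right endpoint $c=J_1(\mu)^+\geq j+1$ with $I_\nu=[j+1,c]$, so that a single application of Lemma \ref{lem:1}, with shifted left endpoints $J_1(\mu)^--c-1\leq j-c-1<j-c$, finishes the argument (your degenerate case $c=-1$, $m=0$, is correctly subsumed). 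Besides being shorter, your construction quietly repairs a wrinkle in the paper's write-up: the minimal common left endpoint $k>j$ used there need not exist --- e.g.\ if $B(\mu_k)=1$ for all $k\leq-1$ while $\nu$ has its only $B$-value change at $j$, then $\mathcal{I}(\mu)=\{[-\infty,-1]\}$ has no finite left endpoints at all --- so the paper's proof tacitly needs the no-shift convention $k=0$ in that situation, whereas your cut point $c$ always exists. Your closing ``self-contained variant'' is only a sketch, but its key estimate is sound: since $s(\mu_j)=r(\mu_{j+1})=r(\nu_{j+1})$, the expansions of $\Omega(\mu_{J_1(\mu)})$ and $\Omega(\nu_{I_\nu})$ first differ at depth $c-j+2$, giving the claimed lower bound $R^{-(c-j+2)}$; in any case nothing in your main argument depends on it.
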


\begin{proof}
Let $k>j$ be the smallest number such that $k = I_{\mu}^{-}$ and $k = I_\nu^{-}$ for some $I_\mu \in\mathcal{I}(\mu)$ and $I_\nu \in\mathcal{I}(\nu)$. Since $\mu_{m} = \nu_{m}$, for all $m\geq k$, it follows that $I_\mu =I_\nu$. Thus, there exists $z\in\mathbb{C}$ such that $\zeta(\mu) = z + R^{3k}\zeta(\sigma^{|k|}(\mu))$ and $\zeta(\nu) = z + R^{3k}\zeta(\sigma^{|k|}(\nu))$. Hence, $\zeta(\sigma^{|k|}(\mu)) = \zeta(\sigma^{|k|}(\nu))$. Denote $\mu' = \sigma^{|k|}(\mu)$ and $\nu' = \sigma^{|k|}(\nu)$.  Let's now show $J_{1}(\mu)^{+} = k-1 = J_{1}(\nu)^{+}$, or equivalently, $J_{1}(\mu')^{+} = -1 = J_{1}(\nu')^{+}$.  Let $K_{1}(\mu')\in\mathcal{I}(\mu')$ and $K_{1}(\nu')\in\mathcal{I}(\nu')$ be the intervals containing $-1$.

By minimality of $k$, either $j-k\in K_{1}(\mu')$ or $j-k\in K_{1}(\nu')$ or $[K_{1}(\mu')]^{-}\neq [K_{1}(\nu')]^{-}$. Hence, we have either
\begin{enumerate}
\item $j-k\in K_{1}(\nu')\cap K_{1}(\mu')$,
\item $j-k\notin K_{1}(\nu')\cap K_{1}(\mu')$ and $j-k\in K_{1}(\nu')\cup K_{1}(\mu')$, or
\item $[K_{1}(\mu')]^{-}\neq [K_{1}(\nu')]^{-}$ .
\end{enumerate}

Let us confirm the lemma in each case.

$1:$ If $j-k\in K_{1}(\nu')\cap K_{1}(\mu')$, then $J_{1}(\mu') = K_{1}(\mu')$, and $J_{1}(\nu') = K_{1}(\nu')$. Hence, $J_{1}(\mu')^{+} = [K_{1}(\mu')]^{+} = 1 = [K_{1}(\nu')]^{+}= J_{1}(\nu')^{+}.$

$2:$ If $j-k\notin K_{1}(\nu')\cap K_{1}(\mu')$ and $j-k\in K_{1}(\nu')\cup K_{1}(\mu')$, then either $K_{1}(\mu')^{-} < K_{1}(\nu')^{-}$ or  $K_{1}(\nu')^{-} < K_{1}(\mu')^{-}$. In either case, lemma \ref{lem:1} implies $\zeta(\mu')\neq \zeta(\nu')$, which is a contradiction.

$3:$ If $j-k\notin K_{1}(\nu')\cup K_{1}(\mu')$, then $K_{1}(\nu')^{-} > j-k$, $K_{1}(\mu') > j-k$ and $\nu'_{m} = \mu'_{m}$ for all $m\geq j-k$. Therefore, we have $[K_{1}(\nu')]^{-} = [K_{1}(\nu')]^{-}$. But this is a contradiction to the assumption in $3.$
\end{proof}

\begin{lem}
\label{lem:3}
Let $(G_B,E_A)$ be a regular KEP-action such that $B$ is in $M_N(\{0,1\})$. Suppose $\mu, \nu$ are in $E^{-\infty}_{A}$ and let $J \in \mathcal{I}(\mu)$ and $K \in \mathcal{I}(\nu)$ be the intervals containing $-1$, respectively. If $-\infty< J^-$, $J=K$, and $\Omega(\mu_{J}) = \Omega(\nu_{J})$, then $\theta^k(\mu_J) \neq \theta^k(\nu_J)$ implies $\zeta(\mu) \neq \zeta(\nu)$.
\end{lem}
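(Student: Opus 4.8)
The plan is to isolate the contribution of the common interval $J=K$ to $\zeta(\mu)$ and $\zeta(\nu)$ as a single leading term and to show that the difference of these leading terms dominates everything else. Let $k\in\{0,1\}$ be the common type of $J=K$ and set $p\coloneqq\Omega(\mu_J)=\Omega(\nu_J)$; note $p\geq R^{-1}>0$ since every source label is at least $1$. Because $J$ contains $-1$ we have $J^+=-1$, so the $J$-summands of $\zeta(\mu)$ and $\zeta(\nu)$ are $L_\mu\coloneqq R^{-3}p\,e^{2\pi i\theta^k(\mu_J)}$ and $L_\nu\coloneqq R^{-3}p\,e^{2\pi i\theta^k(\nu_J)}$, while every other interval $I\in\mathcal{I}(\mu)$ (resp. $\mathcal{I}(\nu)$) lies in $[-\infty,J^--1]$, hence has $I^+\leq J^--1$. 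Writing $T_\mu,T_\nu$ for these remaining sums and $n\coloneqq -J^-$ for the length of $J$, I would deduce $\zeta(\mu)\neq\zeta(\nu)$ from the triangle inequality once I establish $|L_\mu-L_\nu|>|T_\mu|+|T_\nu|$.

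First I would exploit that $\Omega$ is a base-$R$ expansion: the source and range labels all lie in $\{1,\dots,N\}$ and $R=M(N+1)>N$, so two finite paths of equal length with the same $\Omega$-value must have the same sequence of source and range vertices. Since $J=K$ forces $\mu_J$ and $\nu_J$ to have equal length, the hypothesis $\Omega(\mu_J)=\Omega(\nu_J)$ yields $A(\mu_j)=A(\nu_j)$ for all $j\in J$, and therefore $A^k_{\mu[j,-1]}=A^k_{\nu[j,-1]}$ throughout $J$. Setting $D\coloneqq A^k_{\mu_J}$, a telescoping sum shows $\theta^k(\mu_J),\theta^k(\nu_J)$ lie in $[0,1)$ and, having common denominator $D$, are both multiples of $1/D$. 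As $\theta^k(\mu_J)\neq\theta^k(\nu_J)$, their circular distance on $\mathbb{R}/\mathbb{Z}$ is at least $1/D$, so the elementary bound $2\sin(\pi d)\geq 4d$ on $[0,\tfrac12]$ gives
\[
\bigl|e^{2\pi i\theta^k(\mu_J)}-e^{2\pi i\theta^k(\nu_J)}\bigr|\geq \frac{4}{D}\geq \frac{4}{(M+1)^{n}},
\]
and combined with $p\geq R^{-1}$ this produces $|L_\mu-L_\nu|\geq 4R^{-4}(M+1)^{-n}$.

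For the tails I would use $\Omega(\mu_I)\leq \tfrac{2N}{R-1}$ together with the fact that the surviving intervals have pairwise distinct right endpoints, all $\leq J^--1=-(n+1)$; summing the geometric weights gives $\sum_{I\neq J}R^{3I^+}\leq R^{-3(n+1)}/(1-R^{-3})$ and hence $|T_\mu|,|T_\nu|\leq C\,R^{-3n-3}$ with $C=\tfrac{2N}{(R-1)(1-R^{-3})}$. Comparing the two estimates reduces the required inequality $|L_\mu-L_\nu|>|T_\mu|+|T_\nu|$ to $\bigl(R^3/(M+1)\bigr)^{n}$ being large; since $R^3=M^3(N+1)^3$ is comfortably larger than $M+1$ for all $M\geq 2$ and $N\geq 1$, a short computation of the constants confirms the inequality for every $n\geq 1$, completing the proof.

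The main obstacle is the competition between two exponentially small quantities. The leading difference can be no larger than order $(M+1)^{-n}$, because two distinct angles $\theta^k(\mu_J),\theta^k(\nu_J)$ may be as close as $1/D\approx(M+1)^{-n}$, and this must still overwhelm the tail. The decisive structural point is that the cubic weight $R^{3I^+}$ forces the tail to decay at the much faster rate $R^{-3n}$; since $R^3\gg M+1$, the leading term prevails. A single power of $R$ in the weight would not suffice, which is presumably the reason for the exponent $3$ in the definition of $\zeta$.
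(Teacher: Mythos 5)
Your proposal is correct and follows essentially the same route as the paper: isolate the $J$-term $R^{-3}\Omega(\mu_J)e^{2\pi i\theta^k(\mu_J)}$, use $\Omega(\mu_J)=\Omega(\nu_J)$ to get a common denominator $D=A^k_{\mu_J}=A^k_{\nu_J}$ so the angular gap is at least $1/D\geq (M+1)^{-|J|}$, and beat the tail, which decays like $R^{-3|J|}$ thanks to the cubic exponent, since $R^3\gg M+1$. Your only deviations are cosmetic — the chord bound $2\sin(\pi d)\geq 4d$ in place of the paper's $\sin x\geq x-x^3/6$, and summing the tail as an explicit geometric series over the remaining intervals rather than via the self-similarity identity $\zeta(\mu)=\frac{\omega}{R^3}e^{2\pi i\theta^k(\mu_J)}+R^{-3j}\zeta(\sigma^{j}(\mu))$ — and your final numerical comparison does check out for all $M\geq 2$, $N\geq 1$, $n\geq 1$.
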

\begin{proof}
Note that $\Omega(\mu_{J}) = \Omega(\nu_{K})$ implies $J= K$ is type $1$ for $\mu$ and $\nu$, or type $0$ for $\mu$ and $\nu$ . We denote this shared type as $k$. From $\Omega(\mu_{J}) = \Omega(\nu_{J})$, we have $A^{k}_{\mu_{J}} = A^{k}_{\nu_{J}} =: A$.  Therefore, we may write $\theta^k(\mu_{J}) =\frac{m}{A}$ and $\theta^k(\nu_{J}) = \frac{n}{A}$, for some $m, n\in \mathbb{N}\cup\{0\}$ such that $m,n < A$. By the hypothesis, we have $m\neq n$. Hence,
\[|e^{2\pi i \theta^{k}(\mu_{J})} - e^{2\pi i \theta^{k}(\nu_{J})}| = |1 - e^{\frac{2\pi i(n-m)}{A}}|\geq |1 - e^{\frac{2\pi i}{A}}|.\]
Denote $-J^{-} =: j $ and $M_k = M + 1-k$. We have $A\leq M_k^{j}$, and, since $\theta^k(\mu_J) \neq \theta^k(\nu_J)$, $A > 1$. Therefore, \[|1 - e^{\frac{2\pi i}{A}}|\geq |1 - e^{\frac{2\pi i}{M_k^{j}}}| = \sqrt{2 - 2\cos\left(\frac{2\pi}{M_k^{j}}\right)} = 2\sin\left(\frac{\pi}{M_k^{j}}\right).\]
Putting these two inequalities together, we have 
\begin{equation}
\label{sin}
|e^{2\pi i \theta^{k}(\mu_{J})} - e^{2\pi i \theta^{k}(\nu_{J})}|\geq 2\sin\left(\frac{\pi}{M_k^{j}}\right).
\end{equation}
Denote $\Omega(\mu_{J}) = \Omega(\nu_{J}) =:\omega$ and write $\zeta(\mu) = \frac{\omega}{R^{3}} e^{2\pi i \theta^{k}(\mu_{J})} + \frac{1}{R^{3j}}\zeta(\sigma^{j}(\mu))$, $\zeta(\nu) = \frac{\omega}{R^{3}} e^{2\pi i \theta^{k}(\nu_{J})} + \frac{1}{R^{3j}}\zeta(\sigma^{j}(\nu))$. Using \ref{sin},  and $|\zeta|\leq \frac{N}{R^{3}(R-1)}$,  we see that
\begin{equation*}\label{dmin-2rmax}
|\zeta(\mu) - \zeta(\nu)|\geq 2\frac{\omega}{R^{3}}\sin\left(\frac{\pi}{M_k^{j}}\right) - \frac{2N}{R^{3j + 3}(R-1)}.
\end{equation*}
From $\omega\geq \sum^{j+1}_{i=1}\frac{1}{R^{j}}\geq \frac{1}{2(R-1)}$ and $\sin(x)\geq x - \frac{x^{3}}{3!}$ for all $x\geq 0$, we have
\begin{equation*}\label{est1}
    2\frac{\omega}{R^{3}}\sin\left(\frac{\pi}{M_k^{j}}\right) - \frac{2N}{R^{3j + 3}(R-1)}\geq \frac{1}{(R-1)R^{3}}\left(\frac{\pi}{M_{k}^{j}} - \frac{\pi^{3}}{6 M_{k}^{3j}}\right) - \frac{2N}{R^{3j+3}(R-1)}.
\end{equation*} 
By multiplying the right hand side of inequality \eqref{est1} by $M^{3j}_{k}R^{3}(R-1)$ , we see that, by inequality \eqref{dmin-2rmax}, to prove the lemma, it suffices to prove
\begin{equation*}
    M_{k}^{2j}\pi - \frac{\pi^{3}}{6} > \frac{2NM^{3j}_{k}}{R^{3j}}.
\end{equation*}
Note that
\[
\frac{2NM^{3j}_{k}}{R^{3j}}\leq \frac{2N(M+1)^{3j}}{(N+1)^{3j}M^{3j}}\leq \frac{4(M+1)^{3j}}{2^{3j}M^{3j}}\leq 4.\]

Therefore,
\[M_{k}^{2j}\pi - \frac{\pi^{3}}{6} - \frac{2NM^{3j}_{k}}{R^{3j}}\geq M^{2j}_{k}\pi - \frac{\pi^{3}}{6} - 4 \geq 4\pi - \frac{\pi^{3}}{6} - 4 > 0. \qedhere\]
\end{proof}

\begin{thm}
\label{thm:embedding}
Let $(G_B,E_A)$ be a regular KEP-action such that $B\in M_N(\{0,1\})$. If $\mu, \nu \in E^{-\infty}_{A}$ are such that $\zeta(\mu) = \zeta(\nu)$, then either $\mu=\nu$ or there is $k<0$ such that $\mu_{[k,-1]} = \nu_{[k,-1]}$, $[-\infty, k-1]\in \mathcal{I}^{1}(\mu)\cap\mathcal{I}^{1}(\nu)$, $s(\mu_{j}) = s(\nu_{j})$ for all $j\leq k-1$ and $\theta^{1}(\mu_{[-\infty, k-1]}) = \theta^{1}(\nu_{[-\infty, k-1]})$.
\end{thm}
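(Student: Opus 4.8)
The plan is to prove the stated dichotomy directly: assume $\zeta(\mu)=\zeta(\nu)$ with $\mu\neq\nu$, and locate the lowest block where they can still differ. First I would let $j<0$ be the largest index with $\mu_j\neq\nu_j$, so that $\mu_{[j+1,-1]}=\nu_{[j+1,-1]}$, and invoke Lemma~\ref{lem:4} together with the reduction carried out in its proof: the intervals $J_1(\mu),J_1(\nu)\in\mathcal{I}$ containing $j$ share a right endpoint $k-1$, and since $\mu$ and $\nu$ agree on $[k,-1]$ one may peel off the identical top contribution to get $\zeta(\mu')=\zeta(\nu')$, where $\mu'=\sigma^{|k|}(\mu)$ and $\nu'=\sigma^{|k|}(\nu)$. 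These reduced paths first differ at the index $j-k\leq -1$, which lies in the intervals $K_1(\mu'),K_1(\nu')$ containing $-1$.

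The second step is to force this top interval to be a single infinite block. Applying Lemma~\ref{lem:1} to $\mu',\nu'$, equality of $\zeta$ forbids $K_1(\mu')^-\neq K_1(\nu')^-$, so the two left endpoints coincide. If this common value were finite, Lemma~\ref{lem:0} would give $\Omega(\mu'_{K})=\Omega(\nu'_{K})$ (whence $K_1(\mu')=K_1(\nu')=:K$ are the same interval of the same type) and then Lemma~\ref{lem:3} would give $\theta^i(\mu'_K)=\theta^i(\nu'_K)$; the digit-recovery argument below would then force $\mu'_K=\nu'_K$, contradicting $\mu'_{j-k}\neq\nu'_{j-k}$ with $j-k\in K$. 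Hence the common left endpoint is $-\infty$, i.e. $\mu'$ and $\nu'$ each consist of the single maximal interval $[-\infty,-1]$.

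The heart of the proof is then the single-interval analysis via uniqueness of base expansions, which is where the inflated constants do their work. For a single type-$i$ interval one has $\zeta(\mu')=R^{-3}\Omega(\mu')\,e^{2\pi i\theta^i(\mu')}$, so equating moduli gives $\Omega(\mu')=\Omega(\nu')$. Since $\Omega(\mu')=\sum_{m\leq -1}s(\mu'_m)R^m$ is a base-$R$ expansion with digits $s(\mu'_m)\in\{1,\dots,N\}$ and $R=M(N+1)$ (with $M\geq 2$) satisfies $R-1>N$, no carry/trailing-$(R-1)$ ambiguity can occur and the expansion is unique; thus all sources, hence all ranges and all $A$- and $B$-values, agree along $\mu'$ and $\nu'$, and in particular they share a type. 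If that type were $0$, the radices $A(\mu'_m)+1$ strictly exceed the digit ceiling $\#(\mu'_m)\leq A(\mu'_m)-1$, so $\theta^0$ is likewise a unique mixed-radix expansion and equating arguments forces $\#(\mu'_m)=\#(\nu'_m)$ for all $m$, giving $\mu'=\nu'$ and so $\mu=\nu$, a contradiction. Therefore $[-\infty,-1]\in\mathcal{I}^1(\mu')\cap\mathcal{I}^1(\nu')$, and equating arguments yields $\theta^1(\mu')=\theta^1(\nu')$ in $\mathbb{T}$.

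Finally I would unwind the shift by $\sigma^{|k|}$ to read these conclusions back on $\mu,\nu$: $[-\infty,k-1]\in\mathcal{I}^1(\mu)\cap\mathcal{I}^1(\nu)$, $\mu_{[k,-1]}=\nu_{[k,-1]}$, $s(\mu_m)=s(\nu_m)$ for all $m\leq k-1$, and $\theta^1(\mu_{[-\infty,k-1]})=\theta^1(\nu_{[-\infty,k-1]})$, which is exactly the statement. The main obstacle, and the only place real care is needed, is the digit-extraction step: one must confirm that $\Omega$ uniquely encodes the vertex itinerary and that $\theta^0$ (and every finite $\theta$) uniquely encodes the edge labels. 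The point is that the ``$+1$'' inflation in $A^0$ and the large base $R=M(N+1)$ are precisely calibrated to destroy the usual base-expansion ambiguity, leaving only the genuine odometer identification in the type-$1$ case — which is exactly the $\theta^1=C_v$ coincidence recorded by Proposition~\ref{ae_wrt_connected}.
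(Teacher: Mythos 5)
Your proposal is correct and takes essentially the same route as the paper's proof: the same reduction via Lemma~\ref{lem:4} (peeling off the common top blocks), the same case analysis through Lemmas~\ref{lem:0}, \ref{lem:1} and \ref{lem:3} to force a single common interval $[-\infty,-1]$, and the same uniqueness-of-expansion arguments showing $\Omega$ recovers the vertex itinerary and that type $0$ is impossible, leaving the type-$1$ identification $\theta^{1}(\mu_{[-\infty,k-1]})=\theta^{1}(\nu_{[-\infty,k-1]})$. You even make explicit two steps the paper leaves implicit --- the digit-recovery contradiction when $J=K$ is finite with $\Omega$ and $\theta$ both agreeing, and the observation that equality of $\Omega$ forces the two intervals to share a type --- but the underlying argument is identical.
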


\begin{proof}
If $\mu=\nu$ we are done, so suppose $\mu \neq \nu$. Let $j<0$ be the largest number such that $\mu_j \neq \nu_j$ and let $J \in \mathcal{I}(\mu)$ and $K \in \mathcal{I}(\nu)$ be the intervals containing $j$ and note that Lemma \ref{lem:4} implies that $J^+=K^+$.

We claim that it suffices to show that $J^{-}= K^-= -\infty$ and  that $J=K$ is of type $1$. Indeed, If this is the case, then denoting $\mu' = \sigma^{|J^{+}+1|}(\mu)$ and $\nu' = \sigma^{|J^{+}+1|}(\nu)$, then $\frac{1}{R^{2}}\Omega(\mu_{J})e^{2\pi i \theta^1(\mu_{J})}=\zeta(\mu') =  \zeta(\nu') = \frac{1}{R^{2}}\Omega(\nu_{J})e^{2\pi i \theta^1(\nu_{J})}$. This implies that $\Omega(\mu_{J})=\Omega(\nu_{J})$ and $e^{2\pi i \theta^1(\mu_{J})}=e^{2\pi i \theta^1(\nu_{J})}$, which is the case if and only if $s(\mu_{j}) = s(\nu_{j})$ for all $j\leq k-1$ and $\theta^{1}(\mu_{[-\infty, k-1]}) = \theta^{1}(\nu_{[-\infty, k-1]})$.

Suppose either $-\infty< J$ or $-\infty < K$. Then, one of $J^{-} < K^{-}$ or $K^{-} < J^{-}$ or $J = K$ and $-\infty<J^{-}$. In the first two cases, Lemma \ref{lem:1} implies $\zeta(\mu')\neq\zeta(\nu')$, a contradiction. In the final case, Lemma \ref{lem:0} implies $\zeta(\mu')\neq \zeta(\nu')$ if $\Omega(\mu_{J})\neq \Omega(\nu_{J})$ and Lemma \ref{lem:3} implies $\zeta(\mu')\neq \zeta(\nu')$ if $\Omega(\mu_{J}) = \Omega(\nu_{J})$, a contradiction in both cases.

Therefore, we must have $J^{-} = K^{-} = -\infty$. For $\nu\in E_{A}^{-\infty}$, $\#(\nu_{i})\leq A_{\nu_{i}}^{0} -2$ for all $i < 0$. Hence, $e^{2\pi i\theta^{0}}:E^{-\infty}_{A}\to \mathbb{T}$ is injective. Therefore, if $J$ was type $0$, then $e^{2\pi i\theta^{0}(\mu_{J})} = e^{2\pi i\theta^{0}(\nu_{J})}$ implies $\mu_{J} = \nu_{J}$, a contradiction. So, $J$ is of type $1$, and the proof is complete.
\end{proof}

\begin{cor}[Proof of Theorem \ref{Main Embedding Thm}]
Let $(G_B,E_A)$ be a regular KEP-action such that $B\in M_N(\{0,1\})$. If $\mu, \nu \in E^{-\infty}_{A}$ are such that $\zeta(\mu) = \zeta(\nu)$, then $\mu \sim_{ae} \nu$. Therefore, $\zeta:\mathcal{J}_{G_B,E_A}\to \mathbb{C}$ is an embedding.
\end{cor}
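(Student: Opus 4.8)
The plan is to obtain injectivity of $\zeta$ on the quotient essentially for free from Theorem \ref{thm:embedding}, by matching its conclusion against the explicit description of $\sim_{ae}$ that was already isolated in the proof of Proposition \ref{easy}. Concretely, suppose $\zeta(\mu)=\zeta(\nu)$. If $\mu=\nu$ there is nothing to do, so assume $\mu\neq\nu$ and invoke Theorem \ref{thm:embedding} to produce $k<0$ with $\mu_{[k,-1]}=\nu_{[k,-1]}$, with $[-\infty,k-1]\in\mathcal{I}^{1}(\mu)\cap\mathcal{I}^{1}(\nu)$, with $s(\mu_j)=s(\nu_j)=:v_j$ for all $j\leq k-1$, and with $\theta^{1}(\mu_{[-\infty,k-1]})=\theta^{1}(\nu_{[-\infty,k-1]})$.

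The first step is to translate these four conditions into the hypotheses of Proposition \ref{ae_wrt_connected}. Writing $v=(\ldots v_{k-2}v_{k-1})$, the membership $[-\infty,k-1]\in\mathcal{I}^{1}(\mu)$ unwinds, via $B(\mu_j)=B_{r(\mu_j),s(\mu_j)}=B_{v_{j-1},v_j}$, to the statement $B_{v_{j-1},v_j}=1$ for all $j\leq k-1$, which is exactly the standing hypothesis of Proposition \ref{ae_wrt_connected}. Next, using the identity $C_v=\theta^{1}|_{\mathcal{A}_v}$ recorded in the proof of Proposition \ref{easy}, the equality $\theta^{1}(\mu_{[-\infty,k-1]})=\theta^{1}(\nu_{[-\infty,k-1]})$ becomes $C_v(\mu_{[-\infty,k-1]})=C_v(\nu_{[-\infty,k-1]})$, once one shifts so that position $k-1$ plays the role of $-1$. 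At this point the collection of conditions is, after the reindexing $k\mapsto k-1$, precisely the right-hand side of the characterisation of $\sim_{ae}$ for the full paths that is displayed in the proof of Proposition \ref{easy} (the common finite suffix $\mu_{[k,-1]}=\nu_{[k,-1]}$ being the $\mu_{[k+1,-1]}=\nu_{[k+1,-1]}$ there). Hence $\mu\sim_{ae}\nu$, with no separate gluing argument needed.

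Combining this with the reverse implication from Proposition \ref{easy}, namely $\mu\sim_{ae}\nu\Rightarrow\zeta(\mu)=\zeta(\nu)$, shows that $\zeta$ descends to a well-defined injection $\bar\zeta:\mathcal{J}_{G_B,E_A}=E_A^{-\infty}/\!\sim_{ae}\ \to\mathbb{C}$. Continuity of $\bar\zeta$ follows from the continuity of $\zeta$ established earlier (as the uniform limit of the maps $\zeta_k$) together with the universal property of the quotient topology. Finally, $\mathcal{J}_{G_B,E_A}$ is compact, being a continuous image of the compact space $E_A^{-\infty}$, so the continuous injection $\bar\zeta$ from a compact space into the Hausdorff space $\mathbb{C}$ is automatically a homeomorphism onto its image; that is, $\bar\zeta$ is an embedding.

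The genuinely hard analytic content — separating $\zeta(\mu)$ from $\zeta(\nu)$ whenever the two paths fail to be asymptotically equivalent — is carried entirely by Lemmas \ref{lem:0}, \ref{lem:1}, \ref{lem:3} and packaged into Theorem \ref{thm:embedding}, so no further estimates are required here. I expect the only delicate points to be purely bookkeeping: the off-by-one in the interval endpoint ($k$ versus $k-1$) relating Theorem \ref{thm:embedding} to the characterisation in Proposition \ref{easy}, the confirmation of the identity $C_v=\theta^{1}|_{\mathcal{A}_v}$ under the appropriate shift, and the standard compact-to-Hausdorff argument that promotes set-theoretic injectivity to a topological embedding.
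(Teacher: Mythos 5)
Your proposal is correct and follows essentially the same route as the paper: the paper's proof is a one-line reduction stating that the characterisation of $\zeta(\mu)=\zeta(\nu)$ in Theorem \ref{thm:embedding} is equivalent to $\mu\sim_{ae}\nu$ via Proposition \ref{ae_wrt_connected} and the proof of Proposition \ref{easy}, which is exactly the translation (including the $k\mapsto k-1$ reindexing and the identity $C_v=\theta^1|_{\mathcal{A}_v}$) that you carry out explicitly. Your added compact-to-Hausdorff argument upgrading the continuous injection on the quotient to a topological embedding is standard and left implicit in the paper, so it is a harmless (indeed welcome) elaboration rather than a deviation.
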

\begin{proof}
    The characterisation of $\zeta(\mu) = \zeta(\nu)$ in Theorem \ref{thm:embedding} is equivalent to $\mu\sim_{ae}\nu$ (see Proposition \ref{ae_wrt_connected} or the proof of Proposition \ref{easy}).
\end{proof}


\begin{thebibliography}{20}

\bibitem{BN} L. Bartholdi, V. Nekrashevych, \emph{Thurston equivalence of topological polynomials}, Acta Math. \textbf{197}, (2006), 1--51.

\bibitem{BMR} K. A. Brix, A. Mundey, and A. Rennie, \emph{Splittings for $C^{*}$-correspondences and strong shift equivalence}, Math. Scand. \textbf{130}, (2024).

\bibitem{BBGHSW} N. Brownlowe, A. Buss, D. Gonçalves, J. B. Hume, A. Sims, M. F. Whittaker, {\em KK-duality for self-similar groupoid actions on graphs}, Trans. AMS. \textbf{377} (2024), 5513--5560.

\bibitem{Exel-Pardo:Self-similar}  R. Exel and E. Pardo, {\em Self-similar graphs a
    unified treatment of Katsura and Nekrashevych $C^*$-algebras},  Adv. Math. \textbf{306} (2017), 1046--1129.

\bibitem{Haslehurst} M. Haslehurst, \emph{$C^{*}$-algebras constructed from factor groupoids and their analysis through relative
K-theory and excision,} Ph.D. Thesis, University of Victoria, Victoria, BC, 2022. Available at http://hdl.handle.net/1828/14156
    
\bibitem{Hume} J. B. Hume, \emph{The K-theory of the $C^{*}$-algebras associated to rational functions}, preprint, 	arXiv:2307.13420 [math.KT], (2023).

\bibitem{Katsura:class_I} T. Katsura, {\em A class of $C^*$-algebras generalizing both graph algebras and homeomorphism $C^*$-algebras IV, pure infiniteness}, J. Funct. Anal. \textbf{254} (2008), 1161--1187.

\bibitem{Katsura:class_II} T. Katsura, {\em A construction of actions on Kirchberg
    algebras which induce given actions on their $K$-groups}, J. Reine Angew. Math. {\bf
    617} (2008), 27--65.

\bibitem{Kitchens} B. Kitchens, {Symbolic Dynamics: One-sided, Two-sided and Countable State Markov Shifts}, Universitext, Springer, 1998.

\bibitem{Laca-Raeburn-Ramagge-Whittaker:Equilibrium} M. Laca, I. Raeburn, J. Ramagge, and M.F.
    Whittaker, {\em Equilibrium states on operator algebras associated to self-similar actions of groupoids on graphs},
    Adv. Math. \textbf{331} (2018), 268--325.
    
\bibitem{Nekrashevych:Self-similar} V. Nekrashevych, {Self-Similar Groups}, Math. Surveys and
    Monographs
    {vol. 117}, Amer. Math. Soc., Providence, 2005.

\bibitem{Nekrashevych:Cstar_selfsimilar} V. Nekrashevych, {\em $C^*$-algebras and
    self-similar groups}, J. Reine Angew. Math. {\bf 630} (2009), 59--123.

\bibitem{Putnam:Binary} I. F. Putnam, {\em Binary factors of shifts of finite type}, Ergod. Theory Dynam. Sys. (2023), 1--45.    
    
\bibitem{Raeburn:Graph_algebras} I. Raeburn, {\em Graph algebras}, CBMS Regional Conference Series
    in Mathematics, vol. 103, Amer. Math. Soc., Providence, 2005.

\end{thebibliography}
\end{document}